\documentclass[12pt]{amsart}
\usepackage{tikz-cd,enumerate,ulem}
\usepackage{cancel}
\usepackage{mathtools}
\usetikzlibrary{arrows}
\usepackage[utf8]{inputenc}
\usepackage{xspace}
\usepackage[dvipsnames]{xcolor}
\usepackage{soul}
\usepackage{comment}

\usepackage[english]{babel}
\usepackage[T1]{fontenc}
\usepackage{amsmath,amssymb}

\usepackage[a4paper,top=3cm,bottom=2cm,left=3cm,right=3cm,marginparwidth=1.75cm]{geometry}

\usepackage{amsthm}
\usepackage{amsfonts}
\usepackage{graphicx}

\usepackage[colorlinks=true, allcolors=blue]{hyperref}
\usepackage{thmtools}
\usepackage{cleveref}
\usepackage{url}
\newtheorem{theorem}{Theorem}[section]
\newtheorem{thm}[theorem]{Theorem}

\usepackage{crossreftools}

\newtheorem{proposition}[theorem]{Proposition}
\newtheorem{definition}[theorem]{Definition}
\newtheorem{lemma}[theorem]{Lemma}
\newtheorem{conjecture}[theorem]{Conjecture}

\newtheorem{cor}[theorem]{Corollary}

\theoremstyle{definition}

\newtheorem{remark}[theorem]{Remark}

{
      \theoremstyle{plain}
      
  }
  {
      \theoremstyle{plain}
      
  }

\def\EE{\mathbb{E}}
\def\FF{\mathbb{F}}
\def\GG{\mathbb{G}}
\def\KK{\mathbb{K}}
\def\LL{\mathbb{L}}
\def\MM{\mathbb{M}}

\def\QQ{\mathbb{Q}}

\def\ZZ{\mathbb{Z}}

\def\Qalg{\overline{\QQ}}

\def\GL{\mathrm{GL}}

\def\Qbar{\overline{\QQ}}
\DeclareMathOperator{\Hom}{Hom}
\DeclareMathOperator{\Gal}{Gal}

\newcommand{\mug}{{\boldsymbol\mu}}

\usepackage{enumitem}
\setlist[itemize,1]{label={--\,}}
\setlist{leftmargin=8mm}

\newtheorem*{fact*}{Fact} 
\newcommand{\Z}{{\mathbb{Z}}}

\newcommand{\Q}{{\mathbb{Q}}}

\newcommand\B{{\rm (B)}\xspace}

\newcommand{\into}{\hookrightarrow}
\newcommand{\onto}{\twoheadrightarrow}
\newcommand{\ovl}{\overline}
\newcommand{\wtl}{\widetilde}

\newcommand{\ccirc}{\kern0.5ex\vcenter{\hbox{$\scriptstyle\circ$}}\kern0.5ex}

\newtheorem*{theorem*}{Theorem}
\newtheorem*{remark*}{Remark}
\numberwithin{equation}{section}
\author{Andrea Conti, Ilaria Del Corso, Arnaud Plessis, Lea Terracini}
\title{Small points in radical extensions of number fields}
\keywords{Weil height, Bogomolov property, $p$-adic Lie groups, Kummer theory, radical extensions, Rémond's conjecture}
\subjclass[2020]{11G50; 11R32, 11S15, 11S20, 12F10}

\begin{document}
\begin{abstract}
We study small points in radical extensions of algebraic fields. Given an algebraic extension $\FF$ of $\mathbb Q$, a finitely generated subgroup $\Gamma\subseteq \FF^\times$, and a rational prime $p$, we give a general criterion ensuring that
\(
\FF(\Gamma^{p\text{-}\mathrm{div}})\setminus \Gamma^{\mathrm{div}}
\)
has the Bogomolov property. This problem is motivated by a conjecture of Rémond, formulated when $\FF$ is a number field, predicting that such radical extensions contain no unexpected small points outside the divisible hull of the group used to generate them.
As applications, we obtain new cases of Rémond’s conjecture for radical extensions generated by division points with respect to a finite set of primes, recovering and extending previous results of Amoroso and the third author. Our argument is based on  a recent result on small points in $p$-adic Lie extensions.
\end{abstract}
\maketitle

\section*{Introduction} 

Given an algebraic number $\alpha$, we denote by $h(\alpha)$ its (absolute, logarithmic) Weil height. 
A set $\mathcal{Y}$ of algebraic numbers is said to have the \textit{Bogomolov property}, shortened as ‘property \B', if the Weil height of any element in $\mathcal{Y}$ is either $0$ or bounded from below by an absolute positive constant. 
This property is non-trivial since one easily sees that $\overline{\QQ}$ does not satisfy the property \B: a standard example of a sequence of algebraic numbers whose Weil height converges to 0 is given by $(b^{1/n})_n$ for any $b\in\Q^\times\setminus\{\pm 1\}$.

The interest in the Bogomolov property lies in the famous Lehmer's problem, which predicts that the Weil height of any algebraic number $\alpha$ of degree $D_\alpha$ is bounded from below by $CD_\alpha^{-1}$ for some positive absolute constant $C$, unless $\alpha$ is either $0$ or a root of unity. 
Of course, this conjecture holds true if we force $\alpha$ to belong to an algebraic set with property \B. 
This means that we can turn our attention to understanding algebraic numbers with arbitrarily small, but positive, Weil height, usually referred to as ‘small points', such as the terms in the sequence from the previous paragraph. 
The Galois orbit of a small point has a nice behavior since it is equidistributed around the unit circle according to Bilu's celebrated equidistribution theorem \cite{Bilu1997}. 
However, it does not tell us how to construct them \textit{explicitly}. 

Although the notion of property \B was formalized for the very first time in 2001 by E. Bombieri and U. Zannier \cite{BombieriZannier2001}, the explicit search for small points lying in a given field started much earlier. 
In that direction, a theorem of D. Northcott asserts that every number field has the property \B \cite{Northcott1950}. 
The first infinite algebraic extension of $\Q$ for which property \B was proved was the maximal totally real field extension of $\QQ$, by the work of A. Schinzel \cite{Schinzel1974}.
Since the publication of \cite{BombieriZannier2001}, the property \B has been established for a variety of fields: see for instance \cite{AmorosoDvornicich2000,AmorosoZannier2000,Habegger2013,AmorosoDavidZannier2014,FiliMilner2015,Grizzard2015,Galateau2016,Plessis2019,Frey2021,Frey2022,Plessis2024b,DixitKala2024,Sahu2025,AmorosoTerracini2026}. 

On the other hand, it is much more challenging to explicitly identify small points in a given field if the latter does not satisfy the property \B. In fact, as far as the authors know, there are only two results in the literature in this direction, the first one being a special case of the second. We briefly sketch them and explain how they lead to the work of the present paper.

Let $\KK$ be a number field, and let $\alpha\in\KK^\times\setminus\mug$, where $\mug$ denotes the group of all roots of unity. 
By some basic properties of the Weil height, $h(\zeta \alpha^{1/n})=h(\alpha)/n$ for every positive integer $n$ and every root of unity $\zeta$. 
The field $\KK(\mug, \alpha^{1/2}, \alpha^{1/3},\alpha^{1/4},\dots)$ therefore does not have the property \B. 
But does this field also contain non-obvious algebraic numbers of small height? In other words, does the set 
\[ \KK(\mug, \alpha^{1/2}, \alpha^{1/3},\alpha^{1/4},\dots) \setminus \left\{ \zeta \alpha^q, \zeta\in \mug, q\in\QQ \right\}\] 
have the property \B? 
A special case of a conjecture due to G. R\'emond predicts an affirmative answer to the last question, as we explain below.

Given a rational prime $p$, denote by $\mug_{p^\infty}$ the set of $p$-power roots of unity. 
The first known partial result towards the last question is due to F. Amoroso who showed that the set 
\[ \KK(\mug_{p^\infty}, \alpha^{1/p}, \alpha^{1/p^2}, \alpha^{1/p^3}, \dots) \setminus \left\{ \zeta \alpha^q, \zeta\in \mug, q\in\QQ \right\}\] 
has the property \B when $\KK=\QQ,\, \alpha\in \Z$ and $p$ is an odd rational prime not satisfying the Wieferich condition (that is, $p$ does not divide $\alpha$ and $p^2$ does not divide $\alpha^{p-1}-1$) \cite{Amoroso2016}. 
This result was then generalized by the third author who proved the same conclusion when $\KK$ is any number field, $\alpha$ is any element of $\KK$, and $p$ is any odd rational prime not dividing the discriminant of $\KK$ \cite{Plessis2022}. 

The aforementioned conjecture of Rémond sets the above results in a larger perspective. Let $\tilde\Gamma$ be a subgroup of a divisible abelian group $G$, written multiplicatively. 
We define the rank of $\tilde\Gamma$ as the maximal number of linearly independent elements of $\tilde\Gamma$, i.e. the dimension of the $\QQ$-vector space $\tilde\Gamma\otimes_\ZZ\QQ$. 
For any set of rational primes $S$, the $S$-division group of $\tilde\Gamma$ is the set of all $\alpha\in G$ for which there is a positive integer $n$, whose prime factors lie in $S$, satisfying $\alpha^n \in \tilde\Gamma$. 
We denote this set by $\tilde\Gamma^{S- \mathrm{div}}$. 
To ease notation, we omit the dependence on $S$ if the latter is the set of all rational primes. 
Note that $\tilde\Gamma$ and $\tilde\Gamma^{\mathrm{div}}$ have the same rank. 
The following is a very special case of \cite[Conjecture 3.4]{Remond2017}. 

\begin{conjecture}[R\'emond] \label{conj:Remond}
Let $\KK$ be a number field, and let $\tilde\Gamma \subseteq \overline{\QQ}^\times$ be a subgroup of finite rank. 
Then $\KK(\tilde\Gamma)\setminus \tilde\Gamma^{\mathrm{div}}$ has the property \B. 
\end{conjecture}

Conjecture \ref{conj:Remond} is equivalent to stating that for every number field $\KK$ and every \emph{ finitely generated subgroup} $\Gamma\subseteq \KK^\times$, the set $\KK(\Gamma^{\mathrm{div}})\setminus \Gamma^{\mathrm{div}}$ has the property \B. 
In other words, Conjecture \ref{conj:Remond} predicts that the only way to construct small points in a radical extension of $\KK$  is to extract roots of elements in $\KK$. 

\begin{remark}
\rm{Conjecture \ref{conj:Remond} naturally extends to any semi-abelian variety $\GG$ that is the Cartesian product of a power of the multiplicative group and an abelian variety defined over a number field $\KK$, see \cite[Sections 1 and 5]{Plessis2022} for more details. This generalized conjecture implies that $\GG(\Gamma^{\mathrm{div}})/\Gamma^{\mathrm{div}}$ is a free abelian group for all finitely generated subgroups $\Gamma\subseteq \GG(\KK)$. 
This last fact was recently proved by S. Checcoli and G. Dill \cite{CheccoliDill2025}, giving a significant argument in favor of Conjecture \ref{conj:Remond}. }
\end{remark}

A consequence of a theorem of F. Amoroso and U. Zannier \cite{AmorosoZannier2000} implies that Conjecture \ref{conj:Remond} is true for groups $\tilde{\Gamma}$ of rank $0$ since then $\tilde{\Gamma} \subseteq \mug$ and $\tilde{\Gamma}^{\mathrm{div}} = \mug$. 
The aforementioned theorems of F. Amoroso and the third author treat the special case where $\tilde\Gamma=\Gamma^{p-\mathrm{div}}$ for a subgroup $\Gamma\subseteq\KK^\times$ generated by a single element $\alpha$ and for an odd rational prime $p$ not dividing the discriminant of the number field $\KK$.

The main aim of this paper is to provide a general tool which identifies
sufficient conditions on an algebraic extension \(\FF\) of \(\QQ\) ensuring
that the only small points in \(\FF(\Gamma^{p-\mathrm{div}})\) are those lying
in \(\Gamma^{\mathrm{div}}\).

\begin{theorem} (Theorem \ref{thm:basecase})\label{thm:introbasecase}
Let $\FF$ be 
an algebraic extension of $\QQ$, $\Gamma\subseteq \FF^\times$ a finitely generated subgroup,  and $p$ a rational prime. 
Assume that:
\begin{itemize}
\item[a)] $\FF$ is Galois over some number field; 
\item[b)] $\FF'\setminus\Gamma^{\mathrm{div}}$ has the property \B for every finite extension $\FF'/\FF$;
\item[c)] there is a field embedding $\iota: \overline{\Q}\hookrightarrow \overline{\Q_p}$ such that the topological closure of $\iota(\FF)$ in $\overline{\Q_p}$ is a discrete valuation field.  
\end{itemize}
Then $\FF(\Gamma^{p-\mathrm{div}})\setminus\Gamma^{\mathrm{div}}$ has the property \B.
\end{theorem}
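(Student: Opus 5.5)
The approach rests on the recent result on small points in $p$-adic Lie extensions mentioned in the abstract. I take it in the following form: if $\LL/\FF'$ is a Galois extension whose Galois group is a $p$-adic Lie group, and some prime above $p$ has finite (or suitably controlled) ramification behaviour, then the points of $\LL$ of small height are forced to come from a subfield where the Lie group acts through a quotient we control. The plan is to reduce to this statement in three stages: replace $\FF$ by a suitable finite extension $\FF'$, set up the tower $\FF'(\Gamma^{p\text{-}\mathrm{div}})/\FF'$, and then identify the small points.

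First, the reduction. Put $\FF'=\FF(\mug_{p^\infty}\cap \FF(\Gamma^{p\text{-}\mathrm{div}}))$ or, more simply, adjoin $\mug_{p^{k}}$ for a small $k$. We may enlarge $\FF$ by a finite extension, because of hypothesis b) and because property \B for a set is preserved when we pass to a finite extension of the ambient field. Hypotheses a) and c) also survive this enlargement: $\FF'$ is still Galois over a number field after we take a Galois closure, and the closure of $\iota(\FF')$ is a finite extension of a discretely valued field, hence still discretely valued. Now set $\LL=\FF'(\Gamma^{p\text{-}\mathrm{div}})$. By Kummer theory, $\Gal(\LL/\FF')$ embeds in $\ZZ_p^\times\ltimes \ZZ_p^r$, where $r$ is the rank of $\Gamma$. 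It is therefore a $p$-adic Lie group of dimension at most $r+1$.

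Next, the local input. Hypothesis c) gives a completion $\FF'_v$ that is discretely valued. Consequently the local extension $\LL_w/\FF'_v$ is a $p$-adic Lie extension of a discretely valued base, and its higher ramification is controlled by Sen's theorem: ramification grows linearly with the Lie filtration. This is exactly the setting in which I expect the cited small-points result to apply. That result should say that any $\alpha\in\LL$ of height below a constant $c(\FF',\Gamma,p)$ satisfies the following: for $\sigma$ in a suitable open subgroup $H$ of the Lie group (for instance $H=\Gal(\LL/\FF'(\Gamma^{1/p^m}))$ for large $m$, or the inertia part), $\sigma\alpha/\alpha$ is a root of unity, or $\alpha^{p^m}$ is fixed by $H$ up to roots of unity. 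Lower bounds of this kind come from the standard Amoroso–Zannier style argument: $\sigma\alpha^{p}\equiv\alpha^{p}$ modulo a high power of the prime above $p$, and this congruence is then turned into a height bound via the product formula.

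Finally, deducing the theorem. Once we know that $\alpha^{N}\cdot\zeta$ lies in a finite layer $\FF'(\Gamma^{1/p^m},\mug_{p^m})$ for some $N$ prime to nothing relevant (a $p$-power, or bounded), two cases arise. If $\alpha^N$ is, up to roots of unity, a Kummer element, i.e. $\sigma\alpha^N/\alpha^N\in\mug$ for all $\sigma$, then the Kummer cocycle argument places $\alpha^N\in\FF''^\times\cdot\Gamma^{p\text{-}\mathrm{div}}\cdot\mug$ with $\FF''$ finite over $\FF'$. Dividing by the $\Gamma^{\mathrm{div}}$ part (which changes $h$ in a controlled way) and applying b) to $\FF''$ gives either $\alpha\in\Gamma^{\mathrm{div}}$ or a lower bound on the height. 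Otherwise $\alpha$ essentially lies in a finite extension of $\FF'$, and b) applies directly.

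The main obstacle is the middle step: verifying that the hypotheses of the $p$-adic Lie small-points result actually hold. This means showing that the local Galois group at $v$ is open in the global one (it may fail to be, and then one must pass to the decomposition field, using a)), and that it has the required "infinite ramification with bounded base" form. I would first try to settle this by treating separately the unramified/tamely ramified direction and the $\ZZ_p^r$ Kummer direction.
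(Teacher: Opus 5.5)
There is a genuine gap, and it sits in the middle step you flag yourself. You never pin down what the $p$-adic Lie input needs or gives, and the form you guess is not what is available. Theorem \ref{thm:Piras} works layer by layer. For a tower $\LL_n$ satisfying (GAL), (LIE), (DVF), (LTR), (NC), it gives property \B only for those $\alpha\in\LL_n$ admitting some $\sigma\in\Gal(\LL/\LL_{n-1})$ with $\sigma(\alpha)/\alpha\notin\mug_{p^\infty}$. Your version has one fixed open subgroup $H$ on which every small point has a root-of-unity cocycle. Up to Kummer descent and Theorem \ref{lem:Remstronggen}, that is equivalent to the statement you are proving, so it cannot serve as input.

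The two hypotheses you would actually have to check are not addressed.
\begin{itemize}
\item \textbf{(LTR).} The local tower $F(\Gamma^{p\text{-}\mathrm{div}})/F$ must have finite inertia degree. This is not automatic, because $F$ may have infinite residue field (e.g.\ $\FF\supseteq\mug'$ gives $F\supseteq\QQ_p^{\mathrm{ur}}$). The paper needs Theorem \ref{prop G/I} for this.
\item \textbf{(NC).} The decomposition group of a layer is in general not open in the global one, since the local Kummer rank $s'$ can be smaller than the global rank $s$. Your remedy of passing to the decomposition field fails: that field is an infinite extension of $\FF'$, so b) no longer applies to it.
\end{itemize}
The paper instead keeps a finite base and re-chooses the generators (Theorem \ref{thm:NC}).
\begin{itemize}
\item It twists the global generators $\gamma_i$ by roots of unity so that $\gamma_i\zeta_{p^l}^{u_i}\in K_n^{\times p^l}\cdot\langle\delta_2,\dots,\delta_{s'}\rangle$, where the $\delta_j$ are the local generators.
\item It works over $\EE=\KK_n(\zeta_{p^l})$, where $\mug_{p^l}$ is the group of $p$-power roots of unity in $K_n$, so that $K_n(\tilde\Gamma^{1/p^m})\cap\mug_{p^\infty}=\mug_{p^m}$.
\item It then uses Lemma \ref{lmm:Mgaloisifroots}: a field strictly between two layers is Galois over $\EE$ only if it contains $\mug_{p^{m+1}}$, which the local field does not. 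Hence the normal closure of the decomposition group fills the whole layer.
\end{itemize}

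The endgame also has gaps.
\begin{itemize}
\item Write $\alpha=f\gamma$ with $f\in\FF''$ and $\gamma\in\Gamma^{\mathrm{div}}$. Both $f$ and $\gamma$ can have large height while $\alpha$ is small, so ``dividing by the $\Gamma^{\mathrm{div}}$ part changes $h$ in a controlled way'' is false as stated. That $(\FF''^\times\cdot\Gamma^{\mathrm{div}})\setminus\Gamma^{\mathrm{div}}$ inherits \B from $\FF''\setminus\Gamma^{\mathrm{div}}$ is a genuine theorem (Theorem \ref{lem:Remstronggen}).
\item Your claim that $N$ is bounded in ``$\alpha^N$'' is not justified. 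Without a uniform bound on $N$, a lower bound for $h(\alpha^N)$ gives nothing uniform for $h(\alpha)$.
\item With the layer-wise input you cannot descend purely algebraically. Lemma \ref{lem:orbitagen2} puts a small $\alpha\in\LL_m$ into $\tilde\Gamma^{p\text{-}\mathrm{div}}\cdot\LL_{m-1}$, but the $\LL_{m-1}$-factor may have large height, so the small-points result cannot be reapplied.
\end{itemize}
The missing device is the paper's minimality argument. Take $m_i$ minimal with $\alpha_i\in\LL_{m_i}\cdot\tilde\Gamma^{p\text{-}\mathrm{div}}$. If $(m_i)$ is bounded, conclude by b) and Theorem \ref{lem:Remstronggen}. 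Otherwise, reduce the exponents of the $\tilde\Gamma^{p\text{-}\mathrm{div}}$-factor modulo $p^{n_i-m_i}$. This gives $\delta_i\in\LL_{m_i}\setminus\Gamma^{\mathrm{div}}$ with $h(\delta_i)\le h(\alpha_i)+p^{-m_i}\sum_j h(b_j)\to 0$. Then $\delta_i\notin\mathcal Z$, and Lemma \ref{lem:orbitagen2} contradicts the minimality of $m_i$.

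Finally, your first reduction has a problem: $\FF(\mug_{p^\infty}\cap\FF(\Gamma^{p\text{-}\mathrm{div}}))=\FF(\mug_{p^\infty})$ is infinite over $\FF$ and infinitely ramified at $p$. Only the ``adjoin $\mug_{p^k}$'' version of that reduction is usable.
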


\subsection*{Consequences of our main tool} We state in this subsection several consequences of Theorem \ref{thm:introbasecase}. 
An easy proof by induction allows us to actually state a more general result involving finitely many rational primes. 
For any rational prime $p$, we write $\overline{\ZZ_p}^\times$ for the unit group of the ring of integers of $\overline{\QQ_p}$.

\begin{theorem} \label{thm:intro}
Let $\FF$ be 
an algebraic extension of $\QQ$, $\Gamma\subseteq \FF^\times$ a finitely generated subgroup, and $S$ a finite set of rational primes. 
Assume that: 
\begin{enumerate} [label=\alph*)]
\item $\FF
$ is Galois over some number field; 
\item $\FF'\setminus \Gamma^{\mathrm{div}}$ has the property \B for every finite extension $\FF'/\FF$; 
\end{enumerate}
and that for every $p\in S$, there exists a field embedding $\iota_p:\Qbar\hookrightarrow \overline{\QQ_p}$ such that
\begin{enumerate}[label=\alph*),resume]
\item the topological closure of $\iota_p(\FF)$ in $\overline{\QQ_p}$  is a discrete valuation field; 
\item $\iota_p(\Gamma)\subseteq \overline{\ZZ_p}^\times$, except possibly for one prime $p\in S$. 
\end{enumerate}
Then $\FF'(\Gamma^{S-\mathrm{div}})\setminus \Gamma^{\mathrm{div}}$ has the property \B for every finite extension $\FF'/\FF$. 
\end{theorem}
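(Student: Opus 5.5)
We argue by induction on $|S|$, applying Theorem \ref{thm:introbasecase} one prime at a time. Order $S=\{p_1,\dots,p_s\}$ so that the exceptional prime in d), if there is one, is $p_1$. Put $S_k=\{p_1,\dots,p_k\}$ and $\FF_k=\FF(\Gamma^{S_k-\mathrm{div}})$, so $\FF_0=\FF$. The claim to prove for each $k$ is
\[
\mathrm{(H_k)}:\quad \FF'\setminus\Gamma^{\mathrm{div}} \text{ has property \B for every finite extension } \FF'/\FF_k .
\]
$\mathrm{(H_0)}$ is hypothesis b). For $k=s$ we need more care: a finite extension $\FF'$ of $\FF$ need not contain $\FF$-conjugates of everything, but $\FF'(\Gamma^{S-\mathrm{div}})$ is a finite extension of $\FF_s$. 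So $\mathrm{(H_s)}$ gives the conclusion.

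For the inductive step $k-1\to k$, fix a finite extension $\FF''/\FF_k$. Choose a finite extension $\FF'/\FF_{k-1}$ that is Galois over a number field, with $\FF''\subseteq \FF'(\Gamma^{p_k-\mathrm{div}})$; this is possible since $\FF''$ is generated over $\FF_k$ by finitely many elements, and we may adjoin their conjugates over a number field over which $\FF$ is Galois. We then check the three hypotheses of Theorem \ref{thm:introbasecase} for $\FF'$, $\Gamma$ and $p=p_k$.

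\begin{itemize}
\item[a)] $\FF_{k-1}$ is Galois over a number field $\KK_0$ chosen large enough to contain $\Gamma$ and $\mug_{N}$ for the relevant $N$. Indeed, $\Gamma^{S_{k-1}-\mathrm{div}}$ is stable under $\Gal(\Qbar/\KK_0)$ once $\Gamma\subseteq\KK_0$, because conjugates of roots differ by roots of unity, which also lie in the division group. It follows that $\FF'$ is Galois over a number field.
\item[b)] This is exactly $\mathrm{(H_{k-1})}$, since every finite extension of $\FF'$ is a finite extension of $\FF_{k-1}$.
\item[c)] This is the key step. We must show that the closure of $\iota_{p_k}(\FF')$ is a discrete valuation field, i.e.\ that it is finitely ramified over $\QQ_{p_k}$.
\end{itemize}
Theorem \ref{thm:introbasecase} then shows that $\FF'(\Gamma^{p_k-\mathrm{div}})\setminus\Gamma^{\mathrm{div}}$ has property \B, and hence so does its subset $\FF''\setminus\Gamma^{\mathrm{div}}$.

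The main obstacle is the verification of c). The closure of $\iota_{p_k}(\FF_{k-1})$ is the compositum of the closure of $\iota_{p_k}(\FF)$, call it $L$, which is discretely valued by c), with the roots $\iota_{p_k}(\gamma)^{1/p_j^n}$ for $j<k$ and the $p_j$-power roots of unity. Since $p_j\neq p_k$, these are all extensions of degree prime to $p_k$ over the relevant fields.
\begin{itemize}
\item Adjoining $\mug_{p_j^\infty}$ gives an unramified extension.
\item For $j\ge 2$, hypothesis d) gives $\iota_{p_k}(\Gamma)\subseteq\overline{\ZZ_{p_k}}^\times$. For a unit $u$ and $\ell\neq p_k$, the extension $L(\mug_{\ell^\infty},u^{1/\ell^n})$ is unramified, since $x^{\ell^n}-u$ is separable modulo the maximal ideal.
\item The remaining case is $j=1$ with $p_1$ exceptional, where $\Gamma$ may contain non-units at $p_k$. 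Here only one prime is tamely adjoined. A finitely generated $\Gamma$ has valuation group $v(\Gamma)$ of rank at most $1$ in $\QQ$, so we can write $\Gamma=\langle\pi\rangle\cdot\Gamma_u$, up to finite index, with $\Gamma_u$ consisting of units. Extracting $p_1$-power roots of units is unramified. Extracting them of a single element $\pi$ makes the ramification index become $p_1^\infty$-divisible, which is \emph{not} finite. This is fatal unless the ordering handles it: we place the exceptional prime \emph{last}. Then when we treat it, only primes $p_j$ with $\Gamma$ consisting of units at $p_j$ have been adjoined, and the earlier steps only involve extractions of roots at primes $p_j\neq p_k$ of elements that are units at $p_k$.
\end{itemize}
So we reorder $S$ with the exceptional prime as $p_s$, and then all extractions at step $k$ are of $p_j$-roots ($j<k$) of $p_k$-adic units. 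The extension is therefore unramified, and the closure stays a discrete valuation field, with the same value group as the closure of $\iota_{p_k}(\FF)$ (using the finite extension $\FF'$, which only enlarges by finite degree). For a rigorous version of this last point we would argue that an algebraic unramified extension of a discretely valued field, whose closure has the same uniformizer, remains discretely valued.
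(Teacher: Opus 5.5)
Your final reordering goes the wrong way, and with it the verification of c) fails at the last step. At step $k$ the roots already adjoined are $p_j$-power roots ($j<k$) of elements of $\iota_{p_k}(\Gamma)$, together with $\mug_{p_j^\infty}$. Whether these extractions are unramified at $p_k$ depends on whether $\iota_{p_k}(\Gamma)\subseteq\overline{\ZZ_{p_k}}^\times$. That is hypothesis d) for the prime $p_k$ \emph{now being adjoined}, not for the primes $p_j$ already adjoined. Your sentence ``for $j\ge 2$, hypothesis d) gives $\iota_{p_k}(\Gamma)\subseteq\overline{\ZZ_{p_k}}^\times$'' conflates the two.

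If the exceptional prime is placed last as $p_s$, then $\iota_{p_s}(\Gamma)$ may contain a non-unit $\pi$. Since $\FF_{s-1}$ contains all $\pi^{1/p_1^n}$, the value group at $p_s$ contains $v(\pi)p_1^{-n}\ZZ$ for every $n$, so it is not discrete. Concretely, take $\FF=\QQ$, $\Gamma=\langle 2\rangle$ and $S=\{3,2\}$ in your final order. The last step would need Theorem~\ref{thm:introbasecase} at $p=2$ for a finite extension of $\QQ(\mug_{3^\infty},2^{1/3^\infty})$. The $2$-adic closure of that field contains elements of valuation $3^{-n}$ for all $n$, so it is not a discrete valuation field.

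Your \emph{first} ordering, with the exceptional prime as $p_1$, is the right one, and it is what the paper's induction does. The paper always removes a prime $p$ with $\iota_p(\Gamma)\subseteq\overline{\ZZ_p}^\times$, so the possibly exceptional prime is handled innermost, when $S$ is a singleton.

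With that ordering, c) holds at every step:
\begin{itemize}
\item At step $1$ nothing has been adjoined yet. The closure of $\iota_{p_1}(\FF')$ is a finite extension of the closure of $\iota_{p_1}(\FF)$, which is discretely valued by c).
\item For every $k\ge 2$, the prime $p_k$ is non-exceptional, so $\iota_{p_k}(\Gamma)$ consists of $p_k$-adic units. Hence every previously adjoined root, including the $p_1$-power ones, is a root of order prime to $p_k$ of a $p_k$-adic unit. The closure is therefore unramified over that of $\iota_{p_k}(\FF)$, up to the finite extension $\FF'$, and its value group stays discrete.
\end{itemize}
With this correction, the rest of your argument goes through: the reduction to $(\mathrm{H}_s)$ and the checks of a) and b). One minor point: $L(\mug_{p_j^n})/L$ need not have degree prime to $p_k$ (for instance $p_j=3$, $p_k=2$), but only unramifiedness is needed.
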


\begin{proof}
By induction on the size of $S$. The theorem is obvious when $S$ is empty since then $\FF'(\Gamma^{S-\mathrm{div}})=\FF'$. 
We now assume that $S$ is a non-empty set. 
By assumption, $\FF$ is Galois over some number field $\KK$. Pick a finite extension $\LL/\FF$ and let us show that $\LL(\Gamma^{S-\mathrm{div}})\setminus \Gamma^{\mathrm{div}}$ has the property \B. Since the Galois closure of $\LL$ over $\KK$ is a finite extension of $\FF$, we can therefore freely assume that $\LL/\KK$ is Galois. 

Let us select a rational prime $p\in S$ as follows: if $S$ is a singleton, then $p$ is its unique element. Otherwise, $p$ is any rational prime for which $\iota_p(\Gamma)\subseteq \overline{\ZZ_p}^\times$. 
Our induction hypothesis claims that the set $\FF'(\Gamma^{S\setminus \{p\}-\mathrm{div}}) \setminus \Gamma^{\mathrm{div}}$ has the property \B for every finite extension $\FF'/\LL$ (recall that $\LL/\FF$ is a finite extension). 

Denote by $F$ and $L$ the topological closures of $\iota_p(\FF)$ and $\iota_p(\LL)$ in $\overline{\QQ_p}$, respectively.
The fact that $L/F$ is a finite extension, because $\LL/\FF$ is, implies that $L$ is a discrete valuation field. 
We now claim that $L(\Gamma^{S\setminus \{p\}-\mathrm{div}})$ is a discrete valuation field as well. 
It is clear if $S$ is a singleton since then $L(\Gamma^{S\setminus \{p\}-\mathrm{div}})=L$. 
Otherwise, because $\iota_p(\Gamma)\subseteq \overline{\ZZ_p}^\times$, Kummer theory then explains that $L(\Gamma^{S\setminus \{p\}-\mathrm{div}})$ is an unramified extension of $L$.

Since $\LL(\Gamma^{S-\mathrm{div}})=\LL(\Gamma^{S\setminus \{p\}-\mathrm{div}})(\Gamma^{p-\mathrm{div}})$, the theorem now arises from Theorem \ref{thm:introbasecase} applied to $\FF=\LL(\Gamma^{S\setminus\{p\}-\mathrm{div}})$ (it is Galois over $\KK(\Gamma)$ and any finite extension of $\LL(\Gamma^{S\setminus\{p\}-\mathrm{div}})$ 
 is  of the form $\FF'(\Gamma^{S\setminus\{p\}-\mathrm{div}})$ for some finite extension $\FF'/\LL$).
\end{proof}

\begin{remark} Since $\Gamma$ is finitely generated, then for all but finitely many rational primes $p$, one has $\iota(\Gamma)\subseteq \overline{\ZZ_p}^\times$ for all field embeddings $\iota: \overline{\QQ} \hookrightarrow \overline{\QQ_p}$.
This means that condition $d)$ of Theorem \ref{thm:intro} merely prevents $S$ from containing rational primes that are in a certain finite set depending  only on $\Gamma$.
\end{remark}

\begin{remark} We thank Damien Roy for bringing our attention to the following interpretation of Theorem \ref{thm:intro}: 
Lehmer's problem has a positive answer for every field $\FF'(\Gamma^{S-\mathrm{div}})$. Indeed, by our theorem, it suffices to show that the answer is positive for the set $\Gamma^{\mathrm{div}}\setminus \mug$.   
Given $x\in \Gamma^{\mathrm{div}}\setminus \mug$, there is a least integer $n>0$ for which $x^n\in \Q(\mug, \Gamma)$. 
We have $x^n\notin \Q(\mug, \Gamma)^p$ for every prime divisor $p$ of $n$, since otherwise we would get $x^{n/p}\in \Q(\mug, \Gamma)$, contradicting the minimality of $n$. 
By \cite[Chapter VI, Theorem 9.1] {LangAlgebra}, the polynomial $X^n-x^n$ is irreducible over $\Q(\mug, \Gamma)$. 
In particular, $n=[\Q(\mug, \Gamma,x):\Q(\mug, \Gamma)]$.

Clearly, $x^n \in \Q(\mug, \Gamma)^\times \setminus \mug$. 
By a special case of  \cite[Theorem 1.1]{AmorosoZannier2000}, $\QQ(\Gamma,\mug)$ has the property \B, being an abelian extension of a number field.
Thus, there is a positive constant $c_\Gamma$, depending only on $\Gamma$, such that $[\Q(x):\Q]h(x)\geq n h(x)= h(x^n)\geq c_\Gamma$.
\end{remark}

We now describe four situations to which our theorems apply. 
The first occurs when $\FF$ is a number field, as in \cite{Amoroso2016, Plessis2022}. 
In this case, any finite extension of $\FF$ has the property \B and the completion of $\FF$ with respect to any finite place of $\FF$ is a discrete valuation field.
Theorem \ref{thm:intro} now implies that $\FF(\Gamma^{S-\mathrm{div}})\setminus \Gamma^{\mathrm{div}}$ has the property \B, provided that for all but at most one prime $p\in S$, there exists a field embedding $\iota_p : \Qbar\hookrightarrow \overline{\QQ_p}$ such that $\iota_p(\Gamma)$ is a subgroup of $\overline{\ZZ_p}^\times$. 

 Although the field $\FF(\Gamma^{S-\mathrm{div}})$  does not contain all roots of unity, a more refined application of Theorem \ref{thm:intro} shows that they can be included.

\begin{cor} \label{cor:corwithallroots}
Let $\Gamma\subseteq \overline{\QQ}^\times$ be a finitely generated subgroup, and let $S$ be a finite set of rational primes.
Assume that for all but at most one prime $p\in S$, there is a field embedding $\iota_p : \Qbar\hookrightarrow \overline{\QQ_p}$ such that $\iota_p(\Gamma)\subseteq \overline{\ZZ_p}^\times$. 
Then the set $\KK(\mug, \Gamma^{S-\mathrm{div}})\setminus \Gamma^{\mathrm{div}}$ has the property \B for every finite extension $\KK/\QQ(\Gamma)$. 
\end{cor}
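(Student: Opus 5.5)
The plan is to apply Theorem \ref{thm:intro} not to $\KK$ itself but to the field
\[
\FF := \KK(\zeta_n : n\geq 1,\ \gcd(n,p)=1 \text{ for all } p\in S),
\]
obtained by adjoining to $\KK$ the roots of unity of order prime to every prime in $S$. The first step is to check that
\[
\KK(\mug,\Gamma^{S-\mathrm{div}})=\FF(\Gamma^{S-\mathrm{div}}).
\]
Since $1\in\Gamma$, every root of unity whose order has all prime factors in $S$ lies in $\Gamma^{S-\mathrm{div}}$ (the divisible hull is taken inside $\overline{\QQ}^\times$). Every root of unity is a product of one of order prime to $S$ and one of $S$-power order. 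Hence $\mug\subseteq \FF(\Gamma^{S-\mathrm{div}})$, and the reverse inclusion is clear. Also $\Gamma\subseteq \QQ(\Gamma)^\times\subseteq \FF^\times$, so $\Gamma$ is a finitely generated subgroup of $\FF^\times$ as required.

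Next I verify hypotheses a)--d) of Theorem \ref{thm:intro} for $\FF$; the conclusion with $\FF'=\FF$ is then exactly the corollary.
\begin{itemize}
\item[a)] $\FF/\KK$ is Galois, in fact abelian, being generated by roots of unity.
\item[b)] Let $\FF'/\FF$ be finite. Then $\FF'=\FF(\beta)$ for some $\beta$, so $\FF'\subseteq \KK'(\mug)$ with $\KK'=\KK(\beta)$ a number field. Since $\KK'(\mug)/\KK'$ is abelian, the theorem of Amoroso--Zannier \cite{AmorosoZannier2000} shows that $\KK'(\mug)$, hence $\FF'$, has the property \B. A fortiori $\FF'\setminus\Gamma^{\mathrm{div}}$ has it.
\item[d)] For all but at most one $p\in S$, take the embedding $\iota_p$ given by the hypothesis, so d) holds.
\item[c)] For the remaining prime of $S$ (if any), take an arbitrary embedding. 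For each $p\in S$, let $K_v$ be the closure of $\iota_p(\KK)$, a finite extension of $\QQ_p$. Then the closure of $\iota_p(\FF)$ is the closure of $K_v(\zeta_n : p\nmid n \ \forall p\in S)$. Because $p\nmid n$, this is an unramified algebraic extension of $K_v$, so it sits inside $K_v^{\mathrm{ur}}$. Its valuation therefore has value group equal to that of $K_v$, a discrete subgroup of $\QQ$. Passing to the topological closure inside $\overline{\QQ_p}$ does not enlarge the value group, since a limit of nonzero elements eventually has the same valuation as the approximants. So the closure is a discrete valuation field, as needed.
\end{itemize}

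The only genuinely delicate point is c), and the whole choice of $\FF$ is driven by it. We cannot take $\FF=\KK(\mug)$: its closure contains $\QQ_p(\mug_{p^\infty})$, which is infinitely ramified, so its valuation is not discrete. The trick is that the $p$-power (more generally $S$-power) roots of unity never need to be placed in the base field, because they are automatically supplied by $\Gamma^{S-\mathrm{div}}$. The only roots of unity we put into the base are those of order prime to $S$, which generate only unramified extensions at the primes of $S$. I would make sure the discreteness-of-value-group argument is phrased for the closure in $\overline{\QQ_p}$, as in the statement of c), rather than for a completion.
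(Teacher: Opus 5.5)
Your proposal is correct and follows the paper's proof exactly. You take $\FF=\KK(\mug')$, with $\mug'$ the roots of unity of order prime to every $p\in S$, and apply Theorem \ref{thm:intro}: hypothesis b) comes from Amoroso--Zannier, hypothesis c) from $\FF$ being unramified above the primes of $S$, and all of $\mug$ is recovered through $\Gamma^{S-\mathrm{div}}$. Your verifications of a)--d) just spell out details the paper leaves implicit.
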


\begin{proof}
 Write $\mug'$ for the set of roots of unity whose order is coprime to every rational prime in $S$. 
 A celebrated result of F. Amoroso and U. Zannier states that all finite extensions of $\FF:=\KK(\mug')$ have the property \B \cite[Theorem 1.1]{AmorosoZannier2000}. 
 It is well-known that every place of $\KK$ extending a rational prime in $S$ is unramified in $\FF$. 
 It now follows from Theorem \ref{thm:intro} that the set $\FF(\Gamma^{S-\mathrm{div}})\setminus \Gamma^{\mathrm{div}}$ has the property \B. 
 Since $\mug_{p^\infty}$ is contained in $\Gamma^{S-\mathrm{div}}$ for all $p\in S$, we get $\FF(\Gamma^{S-\mathrm{div}})=\KK(\mug,\Gamma^{S-\mathrm{div}})$.
\end{proof}

In the case where $\Gamma$ is the group generated by two rational primes $p$ and $q$, and $S =\{p,q\}$, condition $d)$ of Theorem \ref{thm:intro} is not satisfied. 
It does not therefore allow us to determine whether the set $\QQ(\mug, p^{1/p^\infty}, p^{1/q^\infty}, q^{1/p^\infty}, q^{1/q^\infty}) \setminus \Gamma^{\mathrm{div}}$ has the property \B.  
However, Theorem \ref{thm:intro} still yields a partial answer in this situation. 

\begin{cor}
Let $\Gamma$ be the group generated by two rational primes $p$ and $q$. 
Then the set $\KK(\mug, q^{1/q^\infty}, p^{1/p^\infty}, q^{1/p^\infty}) \setminus \Gamma^{\mathrm{div}}$ has the property \B for every number field $\KK$.
\end{cor}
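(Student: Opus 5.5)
The plan is to apply Theorem \ref{thm:introbasecase} directly, with the prime $p$ and a suitably chosen base field $\FF$ that already contains $\mug$, $q^{1/q^\infty}$ and $p^{1/p^\infty}$. Since $\Gamma^{p-\mathrm{div}}$ is generated by $p^{1/p^\infty}$, $q^{1/p^\infty}$ and $\mug_{p^\infty}$, the field $\KK(\mug, q^{1/q^\infty}, p^{1/p^\infty}, q^{1/p^\infty})$ is $\FF(\Gamma^{p-\mathrm{div}})$ once $\FF$ contains $\KK$, $\mug$, $q^{1/q^\infty}$ and $p^{1/p^\infty}$. Enlarging $\KK$ only makes the statement stronger, so we may assume $\KK$ is Galois over $\QQ$.

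First, apply Corollary \ref{cor:corwithallroots} with $S=\{q\}$ and the same $\Gamma$. Condition d) of Theorem \ref{thm:intro} is vacuous for a singleton $S$. This shows that $\KK'(\mug, \Gamma^{q-\mathrm{div}})\setminus\Gamma^{\mathrm{div}}$ has property \B for every finite extension $\KK'/\KK$.

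Next, define $\FF_0 := \KK(\mug, \Gamma^{q-\mathrm{div}})$. This field contains $p^{1/q^\infty}$, which is harmless but not directly compatible with the target field. So instead I would take
\[
\FF := \KK(\mug, q^{1/q^\infty}, p^{1/p^\infty}).
\]
This $\FF$ is Galois over $\KK$, since it is generated over $\KK(\mug)$ by all roots of $q$ and $p$ of the relevant orders. Hence condition a) holds.

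For condition b), I want property \B for finite extensions $\FF'/\FF$ outside $\Gamma^{\mathrm{div}}$. The natural route is to apply Theorem \ref{thm:intro} to the field $\KK(\mug')$, where $\mug'$ denotes the roots of unity of order prime to $pq$, with $S=\{p,q\}$. However, it must be applied to a subgroup rather than to $\Gamma$ itself, and here condition d) fails. I would fix this with a two-step induction using Theorem \ref{thm:introbasecase}:
\begin{enumerate}[label=(\roman*)]
\item Start from $\FF_1=\KK(\mug')$, which satisfies a)--c) at $q$, by Amoroso--Zannier and because $q$ is unramified in $\FF_1$.
\item Adjoin $\Gamma_q^{q-\mathrm{div}}$ with $\Gamma_q=\langle q\rangle$ to get $\FF_2=\KK(\mug', \mug_{q^\infty}, q^{1/q^\infty})$. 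Theorem \ref{thm:introbasecase} gives property \B outside $\Gamma_q^{\mathrm{div}}\subseteq\Gamma^{\mathrm{div}}$ for all finite extensions; the same argument works over any finite extension, as in the proof of Theorem \ref{thm:intro}.
\item Adjoin $\Gamma^{p-\mathrm{div}}$ over $\FF_2$ with prime $p$.
\end{enumerate}

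The remaining point, and the main obstacle, is condition c) at $p$ for $\FF_2$. Consider the closure of $\FF_2$ in $\overline{\QQ_p}$. It is generated over the completion of $\KK$ by roots of unity of order prime to $p$ and by $q^{1/q^\infty}$ with $q$-power roots of unity. Since $q\neq p$ and $q$ is a $p$-adic unit, each of these extensions is unramified at $p$ by Kummer theory. So the closure is contained in the completion of the maximal unramified extension of a finite extension of $\QQ_p$, which is a discrete valuation field. Condition b) for $\FF_2$ is the output of step (ii), and a) is clear.

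Finally, Theorem \ref{thm:introbasecase} then gives property \B for $\FF_2(\Gamma^{p-\mathrm{div}})\setminus\Gamma^{\mathrm{div}}$. This field equals $\KK(\mug, q^{1/q^\infty}, p^{1/p^\infty}, q^{1/p^\infty})$ because $\mug_{p^\infty}\subseteq\Gamma^{p-\mathrm{div}}$. The one subtlety to check carefully is that in step (ii), a finite extension of $\FF_2$ is still handled, i.e. that b) holds for every $\FF'/\FF_2$. This should follow exactly as in the inductive proof of Theorem \ref{thm:intro}, by passing to Galois closures over $\KK$.
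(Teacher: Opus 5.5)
Your final argument is correct and is essentially the paper's proof. You take $\FF_2=\KK(\mug',\mug_{q^\infty},q^{1/q^\infty})$, obtain b) from the $q$-step and c) from $p$ being unramified in $\FF_2$, then apply Theorem~\ref{thm:introbasecase} at $p$. The paper instead gets b) directly from Corollary~\ref{cor:corwithallroots} for $\langle q\rangle$ and $S=\{q\}$. Your steps (i)--(ii) just reprove that corollary, and your first application of it to $\Gamma$ with $S=\{q\}$ already gives b), since $\FF_2\subseteq\KK(\mug,\Gamma^{q-\mathrm{div}})$.

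Delete the opening candidate $\FF=\KK(\mug,q^{1/q^\infty},p^{1/p^\infty})$. It contains $\mug_{p^\infty}$ and $p^{1/p^\infty}$, so it violates condition c).
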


\begin{proof} 
 Let $\mug'$ denote the set of roots of unity of order coprime to $p$. 
 Corollary \ref{cor:corwithallroots} applied to the group generated by $q$ and to the set $\{q\}$ shows that $\KK(\mug', q^{1/q^\infty})\setminus \Gamma^{\mathrm{div}}$ has the property \B for all number fields $\KK$. 
Since every place of $\KK$ extending $p$ is unramified in $\FF:=\KK(\mug',  q^{1/q^\infty})$, Theorem \ref{thm:introbasecase} proves that $\FF(\mug_{p^\infty}, p^{1/p^\infty}, q^{1/p^\infty})\setminus \Gamma^{\mathrm{div}}$ has the property \B.
\end{proof} 

All these corollaries fit into R\'emond's \Cref{conj:Remond}. 
However, our result is general enough to go beyond that setting, as illustrated by the next corollary. 

Given a number field $\KK$ and a finite place $v$ of $\KK$, set $\KK^{tv}$ to be the maximal algebraic extension of $\KK$ in which $v$ is totally split.  

\begin{cor}
Let $\KK$ be a number field, $v$ a finite place of $\KK$ lying over some rational prime $p$, and $\Gamma \subseteq (\KK^{tv})^\times$ a finitely generated subgroup. 
Then $\KK^{tv}(\mug, \Gamma^{p-\mathrm{div}})\setminus \Gamma^{\mathrm{div}}$ has the property \B.
\end{cor}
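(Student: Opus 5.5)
We would deduce the corollary from Theorem \ref{thm:introbasecase} with the single prime $p$, taking as base field $\FF := \KK^{tv}(\mug')$. Here $\mug'$ is the set of roots of unity of order prime to $p$. Since $\mug_{p^\infty}\subseteq \Gamma^{p-\mathrm{div}}$, we have
\[
\FF(\Gamma^{p-\mathrm{div}})=\KK^{tv}(\mug,\Gamma^{p-\mathrm{div}}),
\]
so it suffices to verify conditions a), b) and c) of Theorem \ref{thm:introbasecase} for this $\FF$.

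Condition a) is immediate. $\KK^{tv}$ is Galois over $\KK$, being the compositum of all extensions in which $v$ is totally split (or the fixed field of the closed normal subgroup generated by the conjugates of a decomposition group). Adjoining $\mug'$ keeps it Galois over $\KK$.

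For condition c), we choose $\iota:\Qbar\hookrightarrow\overline{\QQ_p}$ inducing $v$ on $\KK$. Since $v$ splits totally in $\KK^{tv}$, the place of $\KK^{tv}$ determined by $\iota$ has trivial local degree at every finite layer. Hence the closure of $\iota(\KK^{tv})$ is $\KK_v$. The closure of $\iota(\FF)$ is then the compositum of $\KK_v$ with $\QQ_p(\mug')$, which is the maximal unramified extension $\KK_v^{\mathrm{ur}}$ (completed, if one prefers). Its valuation is still discrete, since the ramification index stays that of $\KK_v$. So condition c) holds.

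Condition b) is the main obstacle: we need every finite extension $\FF'$ of $\FF=\KK^{tv}(\mug')$ to have property \B. It does not even matter that we remove $\Gamma^{\mathrm{div}}$. The plan is to use known results in two steps.
\begin{enumerate}[label=\arabic*.]
\item Bombieri--Zannier \cite{BombieriZannier2001} show that a field Galois over a number field with bounded local degree at some prime has property \B. More generally, any algebraic extension $\LL$ of $\QQ$ with bounded local degrees above a fixed prime has property \B. Every finite extension of $\KK^{tv}$ has bounded local degree above $p$ at the place $\iota$, but the Bombieri--Zannier argument needs bounded local degree at all places above $p$. We would therefore first pass to the Galois closure over $\QQ$ and use that conjugates of $v$ form finitely many classes, or directly invoke the $v$-adic version of Bombieri--Zannier.
\item Adjoining $\mug'$, an abelian extension unramified above $p$, must preserve property \B. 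This is the delicate point. The appropriate tool is the result that fields with bounded ramification index and bounded residue degree growth are not sufficient in general; instead we would use the generalization of Amoroso--Zannier/Galateau treating the maximal unramified-at-$p$ abelian extension of a field with bounded local degree at $p$ (e.g.\ \cite{Plessis2019} or \cite{Galateau2016}, giving \B for $\LL^{ab}$-type extensions whose local field at $p$ has finite ramification index). Equivalently, we would show that the closure of $\iota(\FF')$ has finite ramification index over $\QQ_p$ and apply a criterion asserting property \B for Galois extensions of number fields whose completion at some prime is a finite extension of $\QQ_p^{\mathrm{ur}}$.
\end{enumerate}
If such a criterion is available in the cited literature, b) follows. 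If not, the fallback is to drop $\mug'$: apply Theorem \ref{thm:introbasecase} to $\FF=\KK^{tv}$, where b) is Bombieri--Zannier, and then adjoin $\mug'$ separately. This fallback would need an extra argument that adjoining $\mug'$ preserves the statement, which is exactly the difficulty above.

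Once a), b) and c) are verified, Theorem \ref{thm:introbasecase} yields property \B for $\KK^{tv}(\mug,\Gamma^{p-\mathrm{div}})\setminus\Gamma^{\mathrm{div}}$, which is the corollary.
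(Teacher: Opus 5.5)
\textbf{Gap in condition b).} Your overall plan matches the paper: apply Theorem~\ref{thm:introbasecase} to $\FF=\KK^{tv}(\mug')$, note $\FF(\Gamma^{p-\mathrm{div}})=\KK^{tv}(\mug,\Gamma^{p-\mathrm{div}})$, and check a) and c) via Galoisness and unramifiedness at $v$. The gap is b). You never prove that every finite extension of $\KK^{tv}(\mug')$ has property \B; you only list possible references.

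The ``equivalent'' criterion you offer is false: property \B for Galois extensions of number fields whose $p$-adic closure is a finite extension of $\QQ_p^{\mathrm{ur}}$. Take $p$ odd and $\QQ(\mug',\{2^{1/n}\}_{p\nmid n})$. It is Galois over $\QQ$ and unramified above $p$, since the discriminant of $X^n-2$ is prime to $p$ when $p\nmid 2n$. So its $p$-adic closure is $\QQ_p^{\mathrm{ur}}$, yet $h(2^{1/n})=\log 2/n\to 0$. This is exactly why b) and c) are independent hypotheses of the main theorem. Neither of the standard tools applies directly either. Bombieri--Zannier fails because $\LL(\mug')$ has unbounded residue degrees above $p$. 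Amoroso--Zannier fails because $\LL(\mug')/\KK$ is not abelian.

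\textbf{Missing idea.} The paper closes this step with a group-theoretic observation. Every finite extension of $\KK^{tv}(\mug')$ has the form $\LL(\mug')$ with $\LL/\KK^{tv}$ finite. Since $\KK^{tv}/\KK$ is Galois, the Galois closure of $\LL$ over $\KK$ is still finite over $\KK^{tv}$, so you may assume $\LL/\KK$ Galois. Then restriction embeds $\Gal(\LL(\mug')/\KK)$ into $\Gal(\LL/\KK)\times\Gal(\KK(\mug')/\KK)$. The second factor is abelian, and $\Gal(\LL(\mug')/\LL)$ maps into $\{1\}\times\Gal(\KK(\mug')/\KK)$, so it is central. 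Hence the subfield fixed by the center of $\Gal(\LL(\mug')/\KK)$ lies in $\LL$, which has bounded local degree at $v$ because it is finite over $\KK^{tv}$. The Amoroso--David--Zannier theorem on Galois extensions whose center-fixed field has bounded local degree \cite[Theorem 1.2]{AmorosoDavidZannier2014} then gives property \B for $\LL(\mug')$. This establishes b), and the rest of your argument goes through.
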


\begin{proof}
Let $\mug'$ denote the set of roots of unity of order coprime to $p$. 
We first show that every finite extension of $\KK^{tv}(\mug')$ has the property \B. 
As already noticed above, this amounts to prove  that $\LL(\mug')$ has the property \B for all finite extensions $\LL/\KK^{tv}$.  Since $\KK^{tv}/\KK$ is a Galois extension, the Galois closure of $\LL$ over $\KK$ is a finite extension of $\KK^{tv}$; we can therefore assume that $\LL/\KK$ is Galois. 
The subfield of $\LL(\mug')$ fixed by the center of $\Gal(\LL(\mug')/\KK)$ is contained in $\LL$ since this center clearly contains $\Gal(\LL(\mug')/\LL)$.
It now arises from \cite[Theorem 1.2]{AmorosoDavidZannier2014} that $\LL(\mug')$ has the property \B. 

The place $v$ being unramified in both $\KK^{tv}$ and $\KK(\mug')$, and therefore in $\KK^{tv}(\mug')$, the corollary now follows by applying Theorem \ref{thm:introbasecase} to $\FF=\KK^{tv}(\mug')$. 
\end{proof}

\subsection*{Outline of the paper} Our approach to Rémond's conjecture differs from those of
\cite{Amoroso2016,Plessis2022,Plessis2024}. We describe it below. 

The two key ingredients are a lemma of R\'emond (\Cref{lem:Remstronggen}) and a recent result due to P. Piras together with the first and fourth author (\Cref{thm:Piras}); they are stated in Section \ref{sec:B}. More precisely, Theorem \ref{thm:Piras} describes the small points in certain Galois extensions that satisfy a set of assumptions: the base field is a discrete valuation field at a $p$-adic place (property (DVF)), the inertia subgroup over that place equals the decomposition subgroup (property (LTR)),
the Galois group is a $p$-adic Lie group of positive dimension (property (LIE)), and the normal closure, in the global Galois group, of a decomposition subgroup of a graded piece of the Lie filtration is the whole graded piece (property (NC)). 

Section~\ref{sec:preliminariesonradexts} collects some basic properties of radical extensions.
Sections~\ref{sec:NC} and~\ref{sec:LPTR} are devoted to proving properties \textup{(NC)}, (LIE)
and \textup{(LTR)} for radical extensions of the form
\(
  \FF(\Gamma^{p\text{-}\mathrm{div}})/\FF,
\)
where \(\FF\) is a subfield of \(\Qalg\) satisfying suitable assumptions; see Theorem~\ref{thm:NC}, Proposition \ref{prop:lie} and Corollary~\ref{cor LPTR}, respectively. 

We complete the proof of Theorem \ref{thm:introbasecase} in the final Section \ref{sec:mainproof}. 
A descent argument involving Theorem \ref{thm:Piras}, which will be applied to the extension $\FF(\Gamma^{p-\mathrm{div}})/\FF$, will show that $\FF(\Gamma^{p-\mathrm{div}})\setminus (\FF(\Gamma^{1/p^m})\cdot \Gamma^{\mathrm{div}})$ has the property \B for some integer $m\geq 0$. 
By assumption $b)$ of Theorem \ref{thm:introbasecase}, the set $\FF(\Gamma^{1/p^m}) \setminus \Gamma^{\mathrm{div}}$ has the property \B, and so, by Theorem \ref{lem:Remstronggen}, $(\FF(\Gamma^{1/p^m})\cdot\Gamma^{\mathrm{div}}) \setminus  \Gamma^{\mathrm{div}}$ too.
Gathering these facts will end the proof.

\subsection*{Acknowledgments} 
The first author is funded by the Deutsche Forschungsgemeinschaft - Project-ID 444845124 – TRR 326 GAUS. 
The second and fourth authors are members of the INDAM group GNSAGA.
The third author was supported by the Beijing Municipal Natural Science Foundation (No. IS25018).
The second author gratefully acknowledges the support of  the MIUR Excellence Department Project awarded to the Department of Mathematics, University of Pisa, CUP I57G22000700001.
The second author has performed this activity in the framework of the PRIN 2022, titled \emph{Semiabelian varieties, Galois representations and related Diophantine problems}.

\subsection*{Notation}\label{sec:notation}
We will use throughout the paper the following notation:
\begin{itemize}
\item $\mug$\,: group of all roots of unity in $\ovl\Q$; 
\item $\mug_n$ for a positive integer $n$\,: the subgroup of all $n$-th roots of unity;
\item  $\mug_{p^\infty}$  for a prime $p$\,: the subgroup of all $p$-power roots of unity;
\item $\zeta_n$ for a positive integer $n$\,: a primitive $n$-th root of unity;
\item $\KK,\LL, \FF, \dots$ for algebraic extensions of $\QQ$; 
\item $K,L,F,\dots$ for generic fields of characteristic 0; 
\item $K^\times$ is the multiplicative group of a field $K$;
\item $\overline{K}$ denotes an algebraic closure of the field $K$;
\item $[L:K]$\,: the degree of a field extension $L/K$; 
\item $\FF_p$, for a prime $p$\,: the field with $p$ elements;
\item $\langle S\rangle$\,: group generated by a subset $S$ of some group;
\item $G\cdot H$\,: the group $\{gh, g\in G, h\in H\}$, with $G$ and $H$ two abelian subgroups of a common abelian group;
\item if $K$ is a field, $\Gamma$ a subgroup of $K^\times$, and $n$ is a positive integer\,: $$\Gamma^{1/n}=\{\alpha\in\overline{K}^\times\ |\ \alpha^n\in\Gamma\};$$
\item a \emph{fixed} field embedding
\(
\overline{\QQ}\hookrightarrow \overline{\QQ_p}
\)
will be regarded as an inclusion: in this way, \(\overline{\QQ}\) will be viewed as a subfield of \(\overline{\QQ_p}\).
\end{itemize} 

\setcounter{tocdepth}{1}
\tableofcontents

\section{Preliminary results on the Bogomolov property}\label{sec:B}
Given an algebraic number $\alpha$ and a number field $\KK$ containing it, we define the \textit{(absolute logarithmic) Weil height} of $\alpha$ by
\begin{equation*} h(\alpha)=\frac 1{[\KK:\QQ]} \sum_{v} [\KK_v:\QQ_v] \log \max\{1,|\alpha|_v\},\end{equation*}
where $v$ varies over the set of places of $\KK$ and the nonarchimedean absolute values extend the usual $p$-adic ones, for every $p$. It is easily checked that such a definition depends only on $\alpha$, not on $\KK$. 

Following \cite{BombieriZannier2001}, we say that:

\begin{definition}
A set $\mathcal{Y}$ of algebraic numbers is said to have the \textup{Bogomolov property} (property \B for short) if there is a positive constant $C$ such that $h(\alpha)\geq C$ for every $\alpha\in\mathcal{Y}\setminus(\mug\cup\{0\})$. 
\end{definition}

For later use, we record a result of Rémond. 

\begin{thm}[R\'emond] \label{lem:Remstronggen}
Let $\Gamma\subseteq \overline{\QQ}^\times$ be a subgroup of finite rank, and let $\LL/ \Q(\Gamma)$ be an algebraic extension. 
Then the set $\LL^\times \setminus \Gamma^{\mathrm{div}}$ has the property \B if and only if the set $\LL^\times \cdot \Gamma^{\mathrm{div}} \setminus \Gamma^{\mathrm{div}}$ has the property \B. 
\end{thm}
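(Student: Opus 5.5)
The implication ``$\Leftarrow$'' is immediate. Since $\LL^\times\subseteq \LL^\times\cdot\Gamma^{\mathrm{div}}$, we have $\LL^\times\setminus\Gamma^{\mathrm{div}}\subseteq (\LL^\times\cdot\Gamma^{\mathrm{div}})\setminus\Gamma^{\mathrm{div}}$, and property \B passes to subsets. All the work is in ``$\Rightarrow$''. I would argue by contradiction. Suppose there is a sequence $x_k=a_k\gamma_k$ with $a_k\in\LL^\times$, $\gamma_k\in\Gamma^{\mathrm{div}}$, $x_k\notin\Gamma^{\mathrm{div}}$ and $h(x_k)\to0$. The aim is to move as much of $\gamma_k$ as possible into $\LL$, so as to produce elements of $\LL^\times\setminus\Gamma^{\mathrm{div}}$ of height tending to $0$.

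To set this up, let $V=\Gamma^{\mathrm{div}}/\mug$, which is a $\QQ$-vector space of dimension $r=\operatorname{rank}\Gamma$. Let $\Lambda$ be the image of $\Gamma^{\mathrm{div}}\cap\LL^\times$ in $V$; it contains the image of $\Gamma$, because $\LL\supseteq\QQ(\Gamma)$. The height descends to $V$ and satisfies $h(uv)\le h(u)+h(v)$ and $h(u^q)=|q|h(u)$ for $q\in\QQ$. Only the finitely many places at which some generator of $\Gamma$ is not a unit contribute. Hence $h$ extends continuously to a seminorm on $V_\RR=V\otimes\RR$, namely $\frac12\sum_v |\log|\cdot|_v|$ summed over finitely many places, with the usual weights. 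Let $W$ be the closure of $\Lambda$ in $V_\RR$. It is a closed subgroup containing a full-rank lattice, so $V_\RR/W$ is a compact torus.

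The reduction step is as follows. The replacement $(a_k,\gamma_k)\mapsto(a_k\lambda,\gamma_k\lambda^{-1})$ with $\lambda\in\Gamma^{\mathrm{div}}\cap\LL^\times$ keeps $x_k$ fixed, so we may move $\gamma_k$ within its $\Lambda$-coset. If we can choose $\lambda_k$ with $h(\gamma_k\lambda_k^{-1})\to0$, then $b_k:=a_k\lambda_k\in\LL^\times$ satisfies
$$h(b_k)\le h(x_k)+h(\gamma_k\lambda_k^{-1})\to0.$$
Moreover $b_k\notin\Gamma^{\mathrm{div}}$, since $b_k\in x_k\Gamma^{\mathrm{div}}$ and $x_k\notin\Gamma^{\mathrm{div}}$. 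Finally $b_k\notin\mug$ for large $k$, since otherwise $x_k\in\Gamma^{\mathrm{div}}$. This contradicts property \B for $\LL^\times\setminus\Gamma^{\mathrm{div}}$. Along the identity component $W^0$ such approximation is always possible, since $\Lambda$ is dense there. So the task reduces to controlling the image $\bar\gamma_k$ of $\gamma_k$ in the torus $V_\RR/W$, measured by the quotient seminorm.

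This is where I expect the main obstacle. The image $\bar\gamma_k$ is torsion in $V_\RR/W$, but its order is a priori unbounded. I would use the smallness of $h(x_k)$ together with the Galois action. For $\sigma\in\Gal(\Qalg/\LL)$ we have $\sigma(x_k)/x_k=\sigma(\gamma_k)/\gamma_k$. This is a ``Kummer'' element attached to $\gamma_k$ over $\LL$, and it is nontrivial modulo roots of unity exactly when $\gamma_k$ is far from $\Lambda$. The plan is to show that if $\bar\gamma_k$ stays bounded away from $0$, then $x_k$ must have conjugates over $\LL$ differing by non-torsion elements of $\Gamma^{\mathrm{div}}$ of height bounded below. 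Combined with $h(\sigma x_k/x_k)\le 2h(x_k)\to 0$, this gives a contradiction. To make the last step precise, I would first try a pigeonhole argument on the finitely many relevant places, possibly passing to a subsequence where $\bar\gamma_k$ converges in the torus. If needed, I would then replace $x_k$ by a bounded power $x_k^{m}$ to kill the torsion part of the limit. After that the previous paragraph applies.
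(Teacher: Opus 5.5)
There is a genuine gap. Your proof of ``$\Leftarrow$'' and your reduction step are fine: if you can find $\lambda_k\in\Gamma^{\mathrm{div}}\cap\LL^\times$ with $h(\gamma_k\lambda_k^{-1})\to 0$, then $b_k=a_k\lambda_k$ contradicts \B for $\LL^\times\setminus\Gamma^{\mathrm{div}}$.

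The gap is in the step you flag as the main obstacle, and the mechanism you propose there cannot work. Since $\LL\supseteq\QQ(\Gamma)$ and $\gamma_k^N\in\Gamma$ for some $N\geq 1$, every $\sigma\in\Gal(\Qalg/\LL)$ satisfies $\sigma(\gamma_k)^N=\gamma_k^N$. Hence $\sigma(x_k)/x_k=\sigma(\gamma_k)/\gamma_k$ is \emph{always} a root of unity, wherever $\bar\gamma_k$ lies in $V_{\mathbb{R}}/W$. Conjugation over $\LL$ carries no height information, and conjugates of $x_k$ over $\LL$ never differ by non-torsion elements. So you should not try to force $\bar\gamma_k\to 0$ directly; what is true, and what suffices, is that some bounded multiple of $\bar\gamma_k$ is close to $0$. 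Your fallback is also incomplete as stated. Torsion points of unbounded order in a torus can converge to a non-torsion point, and then no bounded power $x_k^m$ kills the limit. Moreover, the pigeonhole that matters is on multiples of $\gamma_k$, not on places.

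The missing idea is Dirichlet's pigeonhole principle on bounded multiples, combined with the quantitative form of \B. Let $c>0$ be the \B constant of $\LL^\times\setminus\Gamma^{\mathrm{div}}$. Since $\Gamma$ only has finite rank, use $g_1,\dots,g_r\in\Gamma$ maximal independent in place of your ``generators''. Write $\gamma_k=\zeta\prod_i g_i^{q_i}$ with $q=(q_i)\in\QQ^r$, and put $C=\sum_i h(g_i)$. For an integer $Q\geq 1$, two of the $Q^r+1$ points $jq$ ($0\le j\le Q^r$) fall in the same box of side $1/Q$ modulo $\ZZ^r$. This gives $1\le j\le Q^r$ and $\lambda\in\langle g_1,\dots,g_r\rangle\subseteq\LL^\times$ with $h(\gamma_k^j\lambda^{-1})\le C/Q$. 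Then $b=a_k^j\lambda\in\LL^\times$ satisfies $h(b)\le j\,h(x_k)+C/Q\le Q^r h(x_k)+C/Q$. Also $b\notin\Gamma^{\mathrm{div}}$, because $b\in x_k^j\,\Gamma^{\mathrm{div}}$ and $\Gamma^{\mathrm{div}}$ is closed under extracting roots, so $x_k^j\notin\Gamma^{\mathrm{div}}$. Choosing $Q$ with $C/Q\le c/2$ gives $h(x_k)\ge c/(2Q^r)$ for every $k$, which is the desired contradiction and in fact an explicit \B constant. The closure $W$ and the torus $V_{\mathbb{R}}/W$ are therefore unnecessary: the lattice coming from $\Gamma\subseteq\LL^\times$ is enough. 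The paper gives no argument of its own here; it deduces the forward implication from Pottmeyer's Corollary~2.3.
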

 
\begin{proof}
The backward direction is trivial and the forward one comes from \cite[Corollary 2.3]{Pottmeyer2021}, since $\Gamma\subseteq \Gamma^{\mathrm{div}} \cap \LL^\times \subseteq \Gamma^{\mathrm{div}}$ (recall that $\Gamma$ and $\Gamma^{\mathrm{div}}$ have the same rank). 
\end{proof}

We also state here a recent result which will play a crucial role in the proof of our main result. 
\begin{theorem}[{\cite[Theorem 3.2]{ContiPirasTerracini2026}}]\label{thm:Piras}
Let $\KK$ be a number field, and consider a tower of algebraic extensions
$$\KK\subseteq \FF\subseteq \LL_0\subseteq \LL_1\subseteq \ldots \subseteq \LL=\bigcup_n \LL_n.$$
Choose a field embedding $\ovl\Q\into\overline{\Q_p}$ and write $K,F,L,L_i$ for the topological closures in $\overline{\Q_p}$ of the fields denoted by the corresponding bold letter. 
Assume that the following conditions are satisfied:
\begin{description}
\item[\rm{(GAL)}] $\LL_n/\KK$ is Galois for every $n$. 
\item[\rm{(LIE)}]\label{ass1:Lie} $\Gal(\LL/\FF)$ is a $p$-adic Lie group of positive dimension, and $(\Gal(\LL/\LL_n))_n$ is a Lie filtration of $\Gal(\LL/\FF)$. 
\item[\rm{(DVF)}]\label{ass1:inerzia finita} $F$ is a discrete valuation field. 
\item[\rm{(LTR)}]\label{ass1:LPTR} The local extension $L/L_0$ is totally ramified. 
\item[\rm{(NC)}] \label{ass1:NC} For every $n$, the normal closure of $\Gal(L_{n+1}/L_n)$ in $\Gal(\LL_{n+1}/\FF)$ is $\Gal(\LL_{n+1}/\LL_n)$. 
\end{description}
Then the set 
\[\mathcal{Z}\coloneqq  \bigcup_{n\ge 1}\{\alpha\in\LL_n \ |\ \exists \sigma\in \Gal(\LL/\LL_{n-1}) \hbox{ such that } \sigma(\alpha)/\alpha\not\in\mug_{p^\infty}\} \]
has the property \B.
\end{theorem}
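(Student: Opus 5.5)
The plan is to follow the Amoroso–Dvornicich / Amoroso–Zannier strategy: combine a uniform $p$-adic congruence with the product formula. Hypothesis (NC) is used to move the "non-torsion" Galois action into a local inertia group, and (LIE), (DVF), (LTR) are used to make the congruence uniform in $n$.

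\emph{Step 1: passing to a local Galois element.} Fix $n\ge1$ and $\alpha\in\LL_n\setminus(\mug\cup\{0\})$ with some $\sigma\in\Gal(\LL/\LL_{n-1})$ such that $\sigma(\alpha)/\alpha\notin\mug_{p^\infty}$. Consider
\[H_\alpha=\{\tau\in\Gal(\LL_n/\LL_{n-1}) : \tau(\alpha)/\alpha\in\mug_{p^\infty}\}.\]
This is a subgroup, by the cocycle relation $\tau\tau'(\alpha)/\alpha=\tau(\tau'(\alpha)/\alpha)\cdot\tau(\alpha)/\alpha$ and the stability of $\mug_{p^\infty}$ under Galois. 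By hypothesis $H_\alpha$ is proper.

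For $g\in\Gal(\LL_n/\FF)$ we have $H_{g\alpha}=gH_\alpha g^{-1}$; here (GAL) ensures that $\Gal(\LL_n/\LL_{n-1})$ is normal, so this makes sense. Suppose the local group $\Gal(L_n/L_{n-1})$ were contained in $H_{g\alpha}$ for every $g$. Then $H_\alpha$ would contain all conjugates of $\Gal(L_n/L_{n-1})$, so by (NC) it would be everything, a contradiction. Since conjugates have the same height, we may therefore replace $\alpha$ by some $\beta=g\alpha$ and find $\tau\in\Gal(L_n/L_{n-1})$ with $\tau(\beta)/\beta\notin\mug_{p^\infty}$. By (LTR), $\tau$ lies in the inertia group at the chosen place.

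\emph{Step 2: the uniform local congruence (the main obstacle).} I want an integer $q=p^{k}$, depending on $n$, such that for every $x$ in the valuation ring of $L_n$ and every $\tau\in\Gal(L_n/L_{n-1})$,
\[|\tau(x)^{q}-x^{q}|_p\le p^{-c\,q\,/\,e_n},\]
with $c>0$ independent of $n$. Here $e_n$ is the ramification index, and $q/e_n$ must stay bounded below. Since (DVF) says $F$ is discretely valued, the extension $L/F$ is an honest $p$-adic Lie extension with finite base ramification. By (LIE) the filtration $\Gal(\LL/\LL_n)$ is a Lie filtration, so Sen's theorem relating the Lie filtration to the upper ramification filtration applies. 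It gives that $\Gal(L/L_{n-1})$ sits in ramification groups of upper index growing like $n$, while $e_n$ grows like $p^{dn}$ with $d=\dim$.

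I would first try to combine Sen's theorem with Herbrand's function to bound the lower numbering, then use that raising to $p$-th powers improves congruences, via $x^p\equiv y^p$ modulo $p\cdot(x-y)$. The worry is tracking the constants uniformly in $n$. To handle non-integral $\beta$, I would apply the congruence to $\beta$ or $\beta^{-1}$, or treat numerator and denominator separately, losing only a multiple of $h(\beta)$.

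\emph{Step 3: product formula.} Set $\gamma=\tau(\beta)^q-\beta^q$. Then $\gamma\ne0$: otherwise $\tau(\beta)/\beta$ would be a $p$-power root of unity. I would run Step 2 at every place of $\LL_n$ above $v$, using (GAL) to transport the estimate along conjugate embeddings; the relevant elements are conjugates of $\tau$, which (NC) still places in $\Gal(\LL_n/\LL_{n-1})$. The $p$-adic places then contribute at most $-c'\,q$ to $\sum_w\log|\gamma|_w$, with $c'>0$ depending only on $[K:\QQ_p]$ and the fixed base. All other places contribute at most $2q\,h(\beta)+\log 2$. The product formula $\sum_w\log|\gamma|_w=0$ then gives $h(\beta)\ge c'/2-\log2/(2q)$, which yields a uniform positive lower bound once $q$ is large. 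Small $n$ are handled by Northcott's theorem, since $\LL_n$ has the property \B\ whenever $\FF$ is a number field; in general one absorbs them into the constants by enlarging $q$.
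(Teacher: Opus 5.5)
Since the paper imports this theorem from its source without proof, I am judging your argument on its own terms. Step 1 is sound and is the natural way to use (NC). The first gap is in Step 3: the saving for $\gamma=\tau(\beta)^q-\beta^q$ does not hold at every place above $v$. At $w=gw_0$ one has $|\gamma|_w=|(g^{-1}\tau g)(g^{-1}\beta)^q-(g^{-1}\beta)^q|_{w_0}$. A local congruence at $w_0$ therefore needs $g^{-1}\tau g\in\Gal(L_n/L_{n-1})$, i.e.\ $\tau$ inertial at $gw_0$. The fact that $g^{-1}\tau g\in\Gal(\LL_n/\LL_{n-1})$ follows from normality alone, not from (NC), and it does not help. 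In general the local group is a proper subgroup of the graded piece with distinct conjugates, which is exactly the situation (NC) is designed for. So $g^{-1}\tau g$ need not even fix $w_0$, and then there is no saving at $w$. Using $g\tau g^{-1}$ at $gw_0$ instead would change the element $\gamma$, which the product formula does not allow. You have to keep only the places $gw_0$ with $g^{-1}\tau g\in\Gal(L_n/L_{n-1})$ and bound their proportion from below. By orbit--stabilizer this proportion is at least $1/\#C_\tau\ge 1/[\LL_n:\LL_{n-1}]$, where $C_\tau$ is the conjugacy class of $\tau$ under $\Gal(\LL_n/\KK)$. This bound is uniform in $n$ because the graded pieces of a Lie filtration eventually all have order $p^{\dim}$. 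Since $\FF$ may be infinite over $\KK$, the count should be done with Haar measure or inside a finite Galois subextension containing $\beta$.

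The second, more serious gap is that the quantitative scheme of Steps 2--3 cannot give a bound independent of $n$. Where it holds, your target $|\tau(x)^q-x^q|_p\le p^{-cq/e_n}$ with $q\asymp e_n$ already follows from the trivial inertia congruence $v_p(\tau(x)-x)\ge 1/e_n$ by taking $p$-th powers; no Sen is needed. It saves only a bounded amount at $p$, so the product formula yields only $h(\beta)\gg 1/q\to 0$. The saving $-c'q$ you then use in Step 3 is not attainable. If $u=\tau(\beta)/\beta$ is a $1$-unit with $v_p(u-1)>1/(p-1)$, then $v_p(u^{p^k}-1)=v_p(u-1)+k$, so the gain grows like $\log q$, not like $q$. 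What is actually needed is a congruence of fixed strength: $v_p(\tau(x)-x)\ge c_0>0$ for all $n$, all $x\in\mathcal{O}_{L_n}$ and all $\tau\in\Gal(L_n/L_{n-1})$. Equivalently, the lower ramification break of $\Gal(L_n/L_{n-1})$ in $L_n/L_0$ must be $\gg[L_n:L_0]$. This is what Sen's theorem yields once fed through Herbrand's function. It places $\Gal(L_n/L_{n-1})$ in the upper ramification group of index $u_n=e(L_0/\QQ_p)\,n+O(1)$. Moreover, the upper groups with index in an interval of fixed length just below $u_n$ have order bounded independently of $n$, so $\psi_{L_n/L_0}(u_n)\gg[L_n:L_0]$. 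With this in hand, fix $q=p^k$ independently of $n$. Choose $k$ so that the resulting $p$-adic saving, which tends to infinity with $k$, exceeds $\log 2$ after weighting by the density above and the local degree factor. This gives $h(\beta)\ge(\text{saving}-\log 2)/(2q)>0$ uniformly. Your Step 2 leaves exactly this uniformity open and aims at an inequality that does not suffice. A minor point: Northcott is unavailable for small $n$, since $\FF$ need not be a number field. Those finitely many $n$ are instead covered by the same argument with $n$-dependent constants.
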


\section{Preliminaries on radical extensions} \label{sec:preliminariesonradexts} 
We collect in this section the results on radical extensions that we will need in the following. 
We start with the main statement of Kummer theory. Our reference is \cite[Chapter~1, \S 5]{NeukirchCFT}.  

\begin{theorem}\label{thm:kummer}
 Let $n$ be a positive integer, and let $K$ be a field of characteristic $0$ containing $\mug_n$.
 Then the abelian extensions $L/K$ of exponent dividing $n$ are in one-to-one correspondence with the subgroups of $K^\times $ containing $K^{\times n}$, via the map $L\mapsto\Delta=L^{\times n}\cap K^\times$. \\
 With this notation, $L=K(\Delta^{1/n})$ and we have a canonical isomorphism
 $$\Hom(\Gal(L/K),\mug_n)\cong \Delta/{K^{\times n}}.$$
 \end{theorem}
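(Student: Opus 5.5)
The statement is the classical Kummer correspondence, so the plan is the standard argument from \cite[Chapter~1, \S 5]{NeukirchCFT}, organised around the Kummer pairing.

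\textbf{Step 1: the pairing.} For a Galois extension $L/K$ and $\Delta=L^{\times n}\cap K^\times$, define
\[
\langle\sigma,a\rangle=\sigma(\alpha)/\alpha,\qquad \sigma\in\Gal(L/K),\ a\in\Delta,
\]
where $\alpha\in L$ is any element with $\alpha^n=a$.
\begin{itemize}
\item The value lies in $\mug_n$, since $(\sigma(\alpha)/\alpha)^n=\sigma(a)/a=1$.
\item It does not depend on the choice of $\alpha$: two such choices differ by an element of $\mug_n\subseteq K$, which $\sigma$ fixes.
\item It is bimultiplicative. Additivity in $\sigma$ uses $\sigma(\zeta)=\zeta$ for $\zeta\in\mug_n$.
\item Its kernel on the right contains $K^{\times n}$.
\end{itemize}
This gives a homomorphism $\Delta/K^{\times n}\to\Hom(\Gal(L/K),\mug_n)$.

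\textbf{Step 2: injectivity and surjectivity when $L/K$ is finite, abelian, of exponent dividing $n$.}
\begin{itemize}
\item Injectivity: if $\langle\sigma,a\rangle=1$ for all $\sigma$, then $\alpha$ is Galois-fixed. Hence $\alpha\in K$ and $a\in K^{\times n}$.
\item Surjectivity: take $\chi\in\Hom(\Gal(L/K),\mug_n)$. It is a $1$-cocycle with values in $L^\times$. By Hilbert 90 there is $\alpha\in L^\times$ with $\chi(\sigma)=\sigma(\alpha)/\alpha$. Then $\alpha^n$ is fixed by $\Gal(L/K)$, so $a=\alpha^n\in\Delta$, and $a$ maps to $\chi$.
\end{itemize}
This is the main step. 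The infinite case then follows by passing to the direct limit over finite subextensions, using continuous homomorphisms on the profinite group.

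\textbf{Step 3: $L=K(\Delta^{1/n})$.}
\begin{itemize}
\item By definition of $\Delta$, $K(\Delta^{1/n})\subseteq L$.
\item Conversely, if $\sigma\in\Gal(L/K)$ fixes $K(\Delta^{1/n})$, then every character of $\Gal(L/K)$ vanishes on $\sigma$, by the surjectivity in Step 2.
\item Since $\Gal(L/K)$ is abelian of exponent dividing $n$, its $\mug_n$-valued characters separate points. Hence $\sigma=1$.
\end{itemize}

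\textbf{Step 4: bijectivity of $L\mapsto\Delta$.}
\begin{itemize}
\item Injectivity follows from Step 3: $L$ is recovered from $\Delta$ as $K(\Delta^{1/n})$.
\item For surjectivity, let $\Delta\supseteq K^{\times n}$ be given and set $L=K(\Delta^{1/n})$. This is the splitting field of the polynomials $X^n-a$, $a\in\Delta$, so it is Galois. Step 1 embeds $\Gal(L/K)$ into $\Hom(\Delta/K^{\times n},\mug_n)$, which shows $L/K$ is abelian of exponent dividing $n$.
\item It remains to prove $L^{\times n}\cap K^\times=\Delta$; the inclusion $\supseteq$ is clear. For $\subseteq$, reduce to finitely generated $\Delta$. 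Let $b\in L^{\times n}\cap K^\times$. Apply Step 2 to $L$ and to $\Delta'=\langle\Delta,b\rangle$: both have the same field $K(\Delta'^{1/n})=L$.
\item Counting, $|\Delta/K^{\times n}|=[L:K]=|\Delta'/K^{\times n}|$. So $\Delta'=\Delta$ and $b\in\Delta$.
\end{itemize}
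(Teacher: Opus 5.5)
Your proposal is correct and is essentially the standard Kummer-pairing plus Hilbert~90 argument (finite case by counting, infinite case by a direct limit); the paper gives no proof of its own and simply cites \cite[Chapter~1, \S 5]{NeukirchCFT} for this. One point in Step~4 needs tightening: ``finitely generated $\Delta$'' should read ``$\Delta/K^{\times n}$ finite'', and the equality $|\Delta/K^{\times n}|=[L:K]$ requires combining the embedding $\Gal(L/K)\hookrightarrow\Hom(\Delta/K^{\times n},\mug_n)$ with the injectivity from Step~2, since Step~2 alone only gives it for the maximal group $L^{\times n}\cap K^\times$.
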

 
\begin{remark} \label{rem:kummer}
With the notation of Theorem~\ref{thm:kummer}, if  $\Gamma\subseteq K^\times$ is a subgroup and $L=K(\Gamma^{1/n})$, then:
\begin{enumerate} 
\item $\Gamma \cdot K^{\times n}=\Delta=L^{\times n}\cap K^\times$; 
\item $\displaystyle{\Hom}({\Gal}(L/K),\mug_n)\cong \Delta/{K^{\times n}}\cong \Gamma /(\Gamma\cap K^{\times n}).$
\end{enumerate}
\end{remark}

\begin{lemma} \label{lemma:polyirr}
Let $p$ be a rational prime, and let $K$ be a field of characteristic $0$ containing $\mug_4$ if $p=2$. 
Then for all $a\in K$ and all positive integers $n$, the polynomial $X^{p^n}-a$ is irreducible over $K$ if and only if $X^p-a$ is irreducible over $K$.     
\end{lemma}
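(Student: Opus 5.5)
The forward direction is immediate: if $X^{p^n}-a$ is irreducible, then so is $X^p-a$. Indeed, a root $\beta$ of $X^{p^n}-a$ gives a root $\beta^{p^{n-1}}$ of $X^p-a$, and we have the tower $K\subseteq K(\beta^{p^{n-1}})\subseteq K(\beta)$. Irreducibility of the first polynomial gives $[K(\beta):K]=p^n$. If $X^p-a$ were reducible, then $[K(\beta^{p^{n-1}}):K]<p$. On the other hand, $[K(\beta):K(\beta^{p^{n-1}})]\le p^{n-1}$, since $\beta$ is a root of $X^{p^{n-1}}-\beta^{p^{n-1}}$. Together these give $[K(\beta):K]<p^n$, a contradiction.

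For the converse I will appeal to the classical criterion of Capelli, as in \cite[Chapter VI, Theorem 9.1]{LangAlgebra}, which the paper already uses in a remark. It states the following. Let $m\ge 2$ and $a\in K^\times$. Then $X^m-a$ is irreducible over $K$ if and only if $a\notin K^{\times \ell}$ for every prime $\ell\mid m$, and, when $4\mid m$, also $a\notin -4K^{\times 4}$. For $m=p^n$ the only prime divisor is $p$. So if $X^p-a$ is irreducible, then $a\notin K^{p}$: otherwise $X-a^{1/p}$ would be a linear factor. The case $a=0$ cannot occur, since $X^p$ is reducible for $p\ge2$.

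It remains to handle the extra condition when $p=2$ and $n\ge2$. Here the hypothesis $\mug_4\subseteq K$ is used. Let $i\in K$ with $i^2=-1$. Then $-4=(1+i)^4$, so $-4K^{\times4}=K^{\times4}\subseteq K^{\times2}$. Hence the condition $a\notin -4K^{\times4}$ already follows from $a\notin K^{\times2}$, and Capelli's criterion gives the irreducibility of $X^{2^n}-a$. The only delicate point is this exceptional $-4c^4$ case. For instance, $X^4+4=(X^2+2X+2)(X^2-2X+2)$ over $\QQ$ shows why the assumption $\mug_4\subseteq K$ is needed.

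If one prefers to avoid citing Capelli, I would instead argue by induction on $n$. Take $\beta$ with $\beta^{p^n}=a$ and set $\gamma=\beta^{p}$. By induction $[K(\gamma):K]=p^{n-1}$, and it suffices to show that $\gamma\notin K(\gamma)^{p}$. Suppose $\gamma=\delta^p$. Taking the norm from $K(\gamma)$ to $K$ gives $N(\gamma)=N(\delta)^p$. Since $N(\gamma)=\pm a$ up to the sign $(-1)^{p^{n-1}-1}$, we get $\pm a\in K^{p}$. For $p$ odd the sign can be absorbed into the $p$-th power, contradicting $a\notin K^p$. For $p=2$ the sign is absorbed because $-1=i^2$. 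I expect this norm and sign bookkeeping to be the main point requiring care, and it is exactly where the hypothesis on $\mug_4$ enters.
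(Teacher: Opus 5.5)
Your proposal is correct and follows the paper's proof. The paper also derives the lemma directly from Capelli's criterion (Lang, \emph{Algebra}, Chapter VI, Theorem 9.1), and it disposes of the exceptional case exactly as you do, via $-4K^4=((1+\zeta_4)K)^4=K^4$; your explicit forward direction and your alternative norm-based induction are sound extras that spell out what the paper leaves to the citation.
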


\begin{proof}
This is a direct consequence of \cite[Chapter VI, Theorem 9.1] {LangAlgebra} since we have $-4K^4=((1+\zeta_4)K)^4=K^4$.
\end{proof}

The following lemma will be used at the very end of the paper to complete our descent argument.

\begin{lemma} \label{lem:orbitagen2}
Let $p$ be a rational prime, $K$ a field of characteristic $0$ containing $\mug_{2p}$, and $\Gamma\subseteq K^\times$ a finitely generated subgroup. 
If $\delta\in K(\Gamma^{1/p})^\times$ satisfies $\sigma(\delta)/\delta\in\mug_{p^{\infty}}$ for every $\sigma\in \Gal(K(\Gamma^{1/p})/K)$, then $\delta^p\in\Gamma\cdot K^p$. 
\end{lemma}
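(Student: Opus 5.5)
Set $L=K(\Gamma^{1/p})$ and $G=\Gal(L/K)$. Since $\mug_p\subseteq K$, Kummer theory (Theorem~\ref{thm:kummer} and Remark~\ref{rem:kummer}) tells us that $L/K$ is abelian of exponent dividing $p$, and that $L^{\times p}\cap K^\times=\Gamma\cdot K^{\times p}$. So it is enough to show that $\delta^p\in K^\times$: then $\delta^p\in L^{\times p}\cap K^\times=\Gamma\cdot K^{\times p}$, as required.

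\emph{Step 1: reduce the cocycle to a fixed finite level.} Put $c(\sigma)=\sigma(\delta)/\delta$. This is a $1$-cocycle $G\to L^\times$, and by hypothesis it takes values in $\mug_{p^\infty}\cap L^\times$. That group is finite, say $\mug_{p^N}$. Hence $c$ is a homomorphism-like cocycle with values in a cyclic $p$-group. The action of $G$ on $\mug_{p^N}\subseteq L$ need not be trivial, since $L$ may contain more $p$-power roots of unity than $K$. It is, however, constrained: $\Gal(K(\mug_{p^N})/K)$ is a quotient of $G$ of exponent $p$, and it is cyclic because $\mug_4\subseteq K$ when $p=2$ (this is the role of the hypothesis $\mug_{2p}\subseteq K$). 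Write $\mug_{p^m}=\mug_{p^\infty}\cap K$ with $m\ge1$. Then $\mug_{p^N}\cap L$ has $N\le m+1$, because $[K(\zeta_{p^{m+2}}):K]=p^2$ cannot divide the exponent $p$. The irreducibility statement in Lemma~\ref{lemma:polyirr} controls this jump, and this is where $\mug_4$ is needed for $p=2$.

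\emph{Step 2: show $\delta^p$ is $G$-fixed.} For $\sigma\in G$ we have $\sigma(\delta^p)/\delta^p=c(\sigma)^p\in\mug_{p^{N-1}}\subseteq\mug_{p^m}\subseteq K$. Therefore $\tau\mapsto\tau(\delta^p)/\delta^p$ is an honest homomorphism $G\to\mug_{p^m}$, because the values are $G$-invariant. Since $G$ has exponent $p$, this homomorphism lands in $\mug_p$.
\begin{itemize}
\item If $N\le m$, the same argument applied to $\delta$ itself shows that $c$ is a homomorphism $G\to\mug_p$. Then $c(\sigma)^p=1$, so $\delta^p\in K$ directly.
\item If $N=m+1$, pick $\sigma$ with $\sigma(\zeta)=\zeta^{1+p^m}$ for $\zeta=\zeta_{p^{m+1}}$. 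Iterating the cocycle relation gives
\[
1=c(\sigma^p)=\prod_{i=0}^{p-1}\sigma^i(c(\sigma)).
\]
Writing $c(\sigma)=\zeta^a$, this yields $\zeta^{a(p+p^m\binom p2)}=1$, which forces $p^m\mid a$ (using $m\ge2$ when $p=2$). So $c(\sigma)\in\mug_p$, and likewise $c(\tau)\in\mug_p$ for all $\tau$, by decomposing $G$ as the direct product of $\ker(G\to\Gal(K(\zeta)/K))$ and $\langle\sigma\rangle$.
\end{itemize}
In either case $c(\tau)^p=1$ for all $\tau\in G$, so $\delta^p$ is $G$-invariant, i.e.\ $\delta^p\in K^\times$. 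Combined with the Kummer identity above, this gives $\delta^p\in\Gamma\cdot K^{p}$.

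The main obstacle is Step 2 when $L$ contains a new $p$-power root of unity. There the cocycle is twisted, and the computation with $\prod\sigma^i(c(\sigma))$ is the delicate part. The case $p=2$ in particular relies on $\mug_4\subseteq K$, which guarantees $m\ge2$.
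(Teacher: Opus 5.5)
Your proof is correct in substance, and its core is the paper's argument: the relation $1=c(\sigma^p)=\prod_{i=0}^{p-1}\sigma^i(c(\sigma))$, followed by the observation that $1+a+\dots+a^{p-1}$ has $p$-adic valuation exactly $1$ when $a\equiv 1\pmod p$ (resp.\ $a\equiv1\pmod 4$ for $p=2$).

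The difference is that the paper skips your Step 1 and the case split altogether. It takes $m$ minimal such that every value $c(\sigma)$ lies in $\mug_{p^m}$; such an $m$ exists because $G$ is finite. It then picks one $\sigma$ with $c(\sigma)=\zeta$ of exact order $p^m$ and writes $\sigma(\zeta)=\zeta^a$. The congruence $a\equiv 1\pmod p$ (resp.\ $\pmod 4$) follows immediately from $\mug_{2p}\subseteq K$. The computation then gives $m\le 1$, and minimality of $m$ propagates this to all of $G$.

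So the paper never needs to bound $\mug_{p^\infty}\cap L$ by $\mug_{p^{m+1}}$. It also does not need a special $\sigma$ acting by $1+p^m$, or the decomposition $G=H\times\langle\sigma\rangle$ to pass from one $\sigma$ to every $\tau$. Your detour does give the extra, unused fact that $L$ contains at most one new layer of $p$-power roots of unity. Your opening paragraph of Step 2, showing $c(\sigma)^p\in\mug_p$, is likewise never used afterwards.

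One point you should fix: the lemma allows $\mug_{p^\infty}\subseteq K$. In that case $\mug_{p^\infty}\cap L$ is not finite, and there is no $m$ with $\mug_{p^m}=\mug_{p^\infty}\cap K$, so your setup breaks down. The case is trivial: $G$ acts trivially on $\mug_{p^\infty}$, so $c$ is a homomorphism from a group of exponent $p$ and lands in $\mug_p$. It should still be stated. The paper avoids the issue automatically by defining $m$ through the finitely many cocycle values rather than through $\mug_{p^\infty}\cap K$.

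Also, the phrase ``$[K(\zeta_{p^{m+2}}):K]=p^2$ cannot divide the exponent $p$'' should be reworded. The actual point is that $K(\zeta_{p^{m+2}})/K$ would be a cyclic subextension of order $p^2$ inside an extension of exponent $p$.
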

\begin{proof} 
 Since $\Gal(K(\Gamma^{1/p})/K)$ is a finite group, there is a least integer $m$ such that $\sigma(\delta)/\delta\in\mug_{p^m}$ for every $\sigma\in \Gal(K(\Gamma^{1/p})/K)$. 
It is enough to show that $m\leq 1$, since then Galois theory gives $\delta^p\in K^\times \cap K(\Gamma^{1/p})^{\times p}=\Gamma\cdot K^{\times p}$, the equality coming from Remark \ref{rem:kummer}.

Let $\sigma\in\Gal(K(\Gamma^{1/p})/K)$ such that $\sigma(\delta)=\zeta\delta$, with $\zeta$ a primitive $p^m$-th root of unity, and set $\sigma(\zeta)=\zeta^a$. Clearly $a,p$ are coprime, and more precisely, $a\equiv 1 \pmod{p}$ if $p$ is odd and $a\equiv 1 \pmod{4}$ if $p=2$ since $\mug_{2p}\subseteq K$ by assumption.

Then $\sigma^p(\delta)=\zeta^{1+a+a^2+\ldots +a^{p-1}}\delta$.
 Since $\Gal(K(\Gamma^{1/p})/K)$ has exponent $p$,  it follows that  $\sigma^p$ is the identity, and this implies
$$1+a+a^2+\ldots +a^{p-1}\equiv 0\pmod{p^m}.$$
If $p=2$, then $a\equiv 1 \pmod{4}$ and $1+a\equiv 0 \pmod{2^m}$, which leads to $m\leq 1$. 

If $p$ is odd, then we put $b=a-1$ and write $$1+a+a^2+\ldots +a^{p-1}=\frac{(b+1)^p-1}{b}=\sum_{j=1}^{p} \binom p j b^{j-1}\equiv p \pmod{p^2}$$ since $p$ divides both $b$ and $\binom p 2$. It follows that $m\leq 1$ as well. 
\end{proof}
 
As usual, $\vert S\vert$ denotes the cardinality of a finite set $S$. 

\begin{lemma} \label{lem:stabdegree}
Let $p$ be a rational prime, $K$ a field of characteristic 0 containing $\mug_{2p}$, and $\Gamma\subseteq K^\times$ a finitely generated subgroup.
Then, $[K(\Gamma^{1/p}):K]=|\Gamma/\Gamma^p|$ if and only if $\Gamma\cap K^{\times p}=\Gamma^p$.
\end{lemma}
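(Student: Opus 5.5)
The plan is to compare the two orders directly via Kummer theory. Since $K$ contains $\mug_p$, Theorem~\ref{thm:kummer} and Remark~\ref{rem:kummer} (applied with $n=p$) give
\[
\Hom(\Gal(K(\Gamma^{1/p})/K),\mug_p)\cong \Gamma/(\Gamma\cap K^{\times p}).
\]
The Galois group $\Gal(K(\Gamma^{1/p})/K)$ is finite abelian of exponent dividing $p$, because $\Gamma$ is finitely generated, so $K(\Gamma^{1/p})$ is generated over $K$ by finitely many $p$-th roots. Its character group into $\mug_p$ therefore has the same cardinality as the group itself, and so
\[
[K(\Gamma^{1/p}):K]=|\Gamma/(\Gamma\cap K^{\times p})|.
\]

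Next I compare this index with $|\Gamma/\Gamma^p|$. We always have $\Gamma^p\subseteq \Gamma\cap K^{\times p}\subseteq\Gamma$. Since $\Gamma$ is finitely generated abelian, $\Gamma/\Gamma^p$ is a finite elementary abelian $p$-group. Hence
\[
|\Gamma/\Gamma^p| = |\Gamma/(\Gamma\cap K^{\times p})|\cdot|(\Gamma\cap K^{\times p})/\Gamma^p|,
\]
with all factors finite. The degree therefore equals $|\Gamma/\Gamma^p|$ if and only if $|(\Gamma\cap K^{\times p})/\Gamma^p|=1$, that is, if and only if $\Gamma\cap K^{\times p}=\Gamma^p$. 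This settles both directions at once.

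There is no real obstacle. The only point needing care is the finiteness of $\Gamma/\Gamma^p$, which is what lets us cancel indices; it follows from finite generation. Note also that the hypothesis $\mug_{2p}\subseteq K$ is stronger than needed: $\mug_p\subseteq K$ suffices to apply Kummer theory.
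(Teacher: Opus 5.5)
Your proposal is correct and follows essentially the same route as the paper: Kummer theory (Remark~\ref{rem:kummer}) gives $[K(\Gamma^{1/p}):K]=|\Gamma/(\Gamma\cap K^{\times p})|$, and the inclusions $\Gamma^p\subseteq\Gamma\cap K^{\times p}\subseteq\Gamma$ with $\Gamma/\Gamma^p$ finite then give the equivalence. The paper phrases the last step as the natural surjection $\Gamma/\Gamma^p\onto\Gamma/(\Gamma\cap K^{\times p})$ being an isomorphism rather than as multiplicativity of indices, which amounts to the same thing.
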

\begin{proof}
 By Remark \ref{rem:kummer}, the degree of $K(\Gamma^{1/p})/K$ is the order of the $\FF_p$-vector space $\Gamma/(\Gamma\cap K^{\times p})$. Since $\Gamma^p\subseteq \Gamma\cap K^{\times p}$, the equality $[K(\Gamma^{1/p}):K]=|\Gamma/\Gamma^p|$ holds if and only if the natural surjection 
$\Gamma/\Gamma^p\onto\Gamma/(\Gamma\cap K^{\times p})$ is an isomorphism, namely when $\Gamma\cap K^{\times p}=\Gamma^p$.  \end{proof}

\begin{lemma} \label{lmm:Mgaloisifroots}
Let $p$ be a rational prime, $K$ a field of characteristic 0 containing $\mug_{2p}$, and $\Gamma\subseteq K^\times$ a finitely generated subgroup. 
 Assume that  $[K(\Gamma^{1/p}):K]=|\Gamma/\Gamma^p|$.
Then, for all  integers $m\geq 0$  and all fields $M$ satisfying $K(\Gamma^{1/p^m})\subsetneq M \subseteq K(\Gamma^{1/p^{m+1}})$, the field $M$ is Galois over $K$ if and only if $M$ contains $\mug_{p^{m+1}}$. 
\end{lemma}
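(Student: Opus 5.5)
The plan is to set $K_m:=K(\Gamma^{1/p^m})$ and $K_m':=K_m(\mug_{p^{m+1}})$, and to reduce both directions to Kummer theory (Theorem \ref{thm:kummer}) over $K_m'$. First note that $\mug_{p^m}\subseteq\Gamma^{1/p^m}$, so $\mug_{p^m}\subseteq K_m$. Also $K_m'/K$ is Galois, and $[K_m':K_m]\le p$. Next, $K_{m+1}=K_m'(\Delta^{1/p})$ is a Kummer extension of exponent $p$ of $K_m'$, where $\Delta:=\langle\Gamma^{1/p^m}\rangle\cdot K_m'^{\times p}$. For $m=0$ the statement is trivial, since $\mug_p\subseteq K$ and $K_1/K$ is abelian. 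So assume $m\ge1$.

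\emph{Backward direction.} If $\mug_{p^{m+1}}\subseteq M$, then $K_m'\subseteq M\subseteq K_{m+1}$, so $M=K_m'(\Delta'^{1/p})$ for a unique group $K_m'^{\times p}\subseteq\Delta'\subseteq\Delta$. Since $K_m'/K$ is Galois, any $\sigma\in\Gal(\overline K/K)$ sends $M$ to $K_m'(\sigma(\Delta')^{1/p})$. So it suffices to show that $\Gal(K_m'/K)$ acts trivially on $\Delta/K_m'^{\times p}$. This holds because for $\alpha\in\Gamma^{1/p^m}$ we have $\sigma(\alpha)=\zeta\alpha$ with $\zeta\in\mug_{p^m}$, and $\mug_{p^m}\subseteq K_m'^{\times p}$ because $\mug_{p^{m+1}}\subseteq K_m'$. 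Hence every such $\Delta'$ is stable, and $M/K$ is Galois.

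\emph{Forward direction.} Suppose $M/K$ is Galois, $M\supsetneq K_m$, and $\zeta_{p^{m+1}}\notin M$; we aim for a contradiction. Then $M\cap K_m'=K_m$ and $[K_m':K_m]=p$. Moreover $\Gal(MK_m'/K_m)\cong\Gal(MK_m'/M)\times\Gal(MK_m'/K_m')$, since both $M$ and $K_m'$ are Galois over $K_m$. Now $MK_m'=K_m'(\Delta'^{1/p})$ with $\Delta'\ne K_m'^{\times p}$. Using the product decomposition, I would average over the order-$p$ factor $\Gal(MK_m'/M)$ (which acts trivially on $\mu_p$ and on $\Delta/K_m'^{\times p}$, as computed above) to produce $y\in M\setminus K_m$ with $y^p=\gamma^{1/p^m}c^p$. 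Here $\gamma\in\Gamma\setminus\Gamma^p$, $\gamma^{1/p^m}$ is a fixed $p^m$-th root, and $c\in K_m'$.

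The hypothesis $[K(\Gamma^{1/p}):K]=|\Gamma/\Gamma^p|$ is equivalent to $\Gamma\cap K^{\times p}=\Gamma^p$ by Lemma \ref{lem:stabdegree}. Combined with Lemma \ref{lemma:polyirr} (this is where $\mug_4\subseteq K$ for $p=2$ matters), it shows that $X^{p^m}-\gamma$ is irreducible over $K$. Hence some $\sigma\in\Gal(\overline K/K)$ satisfies $\sigma(\gamma^{1/p^m})=\zeta\gamma^{1/p^m}$ with $\zeta$ primitive of order $p^m$. Since $M$ is Galois over $K$, $\sigma(y)\in M$, and then
\[
\left(\frac{\sigma(y)\,c}{y\,\sigma(c)}\right)^{p}=\zeta ,
\]
so $MK_m'$ contains a primitive $p^{m+1}$-th root of unity times data coming from $c$.

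The main obstacle is to get rid of $c$ so as to conclude $\zeta_{p^{m+1}}\in M$ itself, rather than only in $MK_m'$. I would first try to choose $c$ (modifying $y$ by elements of $K_m$) so that $c\in K_m$. This should be possible because $\tau\in\Gal(K_m'/K_m)$ acts trivially on $\Delta/K_m'^{\times p}$, and a Hilbert~90-type argument for the cyclic group $\langle\tau\rangle$ should move $c$ into $K_m$. Then $\sigma(y)c/(y\sigma(c))\in M$ is a $p$-th root of a primitive $p^m$-th root of unity, that is, a primitive $p^{m+1}$-th root of unity in $M$. This contradicts the assumption $\zeta_{p^{m+1}}\notin M$.
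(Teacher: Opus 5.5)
Your backward direction is correct. It repackages the paper's argument, which simply notes that $M$ is generated over $K$ by $\Gamma^{1/p^m}$ and by elements $\delta_i\in\Gamma^{1/p^{m+1}}$ whose $K$-conjugates $\mug_{p^{m+1}}\delta_i$ lie in $M$. The end of your forward direction is also the paper's orbit argument: irreducibility of $X^{p^m}-\gamma$ over $K$ via Lemmas \ref{lem:stabdegree} and \ref{lemma:polyirr}, then a $\sigma$ moving $\gamma^{1/p^m}$ by a primitive $p^m$-th root of unity.

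However, the forward direction is incomplete at the step you yourself call the main obstacle. You need $y\in M\setminus K_m$ with $y^p=\alpha c^p$, where $\alpha\in\Gamma^{1/p^m}$ and $c\in K_m$. Without $c\in K_m$, the element $\sigma(y)c/(y\sigma(c))$ need not lie in $M$, and no contradiction follows. You only say this ``should be possible'' by a Hilbert~90-type argument. That comes after an ``averaging'' over $\Gal(MK_m'/M)$ which is never specified, and whose multiplicative version fails: the norm to $M$ of $x$ with $x^p=\alpha c_0^p$ is a root of unity times $\alpha N_{K_m'/K_m}(c_0)$, which lies in $K_m$.

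The obstacle comes from passing to $K_m'$. Since $\mug_p\subseteq K\subseteq K_m$ and $\Gal(K_{m+1}/K_m)$ is elementary abelian, $M/K_m$ is already a Kummer extension of exponent $p$. Remark \ref{rem:kummer}, applied over $K_m$ with the group $\Gamma^{1/p^m}$, gives $M^{\times p}\cap K_m^\times\subseteq K_{m+1}^{\times p}\cap K_m^\times=\Gamma^{1/p^m}\cdot K_m^{\times p}$. So any $y\in M\setminus K_m$ with $y^p\in K_m$ has the required form with $c\in K_m$; this is how the paper begins. Inside your own set-up the repair is equally short. We have $c^p=y^p/\alpha\in M\cap K_m'=K_m$, so Kummer theory for $K_m'=K_m(\zeta_{p^{m+1}})$ gives $c\in\zeta_{p^{m+1}}^jK_m$. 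Then $\zeta_{p^m}^j$ can be absorbed into $\alpha$ without changing $\alpha^{p^m}$.

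A second, smaller point: $y\notin K_m$ only gives $\alpha\notin K_m^{\times p}$, not $\gamma=\alpha^{p^m}\notin\Gamma^p$. Suppose $\gamma=g^p$ with $g\in\Gamma$.
\begin{itemize}
\item Write $\alpha=\eta\beta^p$, where $\beta\in K_m$ is a $p^m$-th root of $g$ and $\eta\in\mug_{p^m}$.
\item Since $\mug_{p^{m-1}}\subseteq K_m^{\times p}$ and $\alpha\notin K_m^{\times p}$, $\eta$ has exact order $p^m$.
\item Then $y/(\beta c)\in M$ is a primitive $p^{m+1}$-th root of unity, and you are done directly.
\end{itemize}
The paper's phrase ``in other words, $\delta_i^{p^{m+1}}\in\Gamma\setminus\Gamma^p$'' passes over the same case. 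With these two additions your argument is complete and is essentially the paper's.
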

\begin{proof} 
Set $K_n=K(\Gamma^{1/p^n})$ for all non-negative integers $n$.
Since $\mug_p\subseteq K$, then $K_{m+1}/K_m$ is a Kummer extension  of exponent $p$, and the same holds for $M/K_m$. 
Thus, $M=K_m(\delta_1,\dots, \delta_t)$ for some positive integer $t$ and some $\delta_i\in K_{m+1}\setminus K_m$ such that $\delta_i^p\in K_m$. 
Remark \ref{rem:kummer} tells us that we can in fact choose $\delta_i$ in $\Gamma^{1/p^{m+1}}\setminus \Gamma^{1/p^m}$, or, in other words, $\delta_i^{p^{m+1}}\in \Gamma\setminus\Gamma^p$. Lemma \ref{lem:stabdegree} guarantees that under our assumptions, we have $\Gamma\cap K^{\times p}=\Gamma^p$; whence $\delta_i^{p^{m+1}}\notin K^{\times p}$. 
By Lemma \ref{lemma:polyirr}, the polynomial $X^{p^{m+1}}-\delta_i^{p^{m+1}}$ is irreducible over $K$ and the Galois orbit of $\delta_i$ over $K$ is therefore the set $\mug_{p^{m+1}}\cdot \delta_i$.

If $M/K$ is Galois, then necessarily $\mug_{p^{m+1}}\cdot \delta_i\subseteq M$, and therefore $\mug_{p^{m+1}}\subseteq M$.
Conversely, assume that $\mug_{p^{m+1}}\subseteq M$. The field $M$ is generated over $K$ by $\Gamma^{1/p^m}$ and $\delta_1,\dots, \delta_t$. It therefore contains the all the Galois orbits of its generators over $K$.
\end{proof}

\begin{remark}\label{rem:reduction-maximal-degree}
Let \(K\) be a field of characteristic \(0\) containing \(\mug_{2p}\), and let
\(\Gamma\subseteq K^\times\) be a finitely generated subgroup. Suppose that
\(\Gamma\) can be generated by \(r\) elements and that
\(
[K(\Gamma^{1/p}):K]=p^r
\).
Then
\(
[K(\Gamma^{1/p}):K]=|\Gamma/\Gamma^p|
\).
Indeed, by Remark~\ref{rem:kummer},
\[
p^r=[K(\Gamma^{1/p}):K]
=
|\Gamma/(\Gamma\cap K^{\times p})|
\leq |\Gamma/\Gamma^p|
\leq p^r.
\]
We shall use this observation in Proposition~\ref{prop:degradext} and
Theorem~\ref{thm:NC} in order to reduce to the situation of
Lemma~\ref{lem:stabdegree}. In both cases, after replacing \(K\) by a finite extension \(\widetilde K\)
and \(\Gamma\) by a finitely generated subgroup
\(\widetilde\Gamma\subseteq \widetilde K^\times\), generated by \(r\) elements,
we shall have
\(
[\widetilde K(\widetilde\Gamma^{1/p}):\widetilde K]=p^r
\).
\end{remark}

The following proposition shows that, after a finite level, a radical tower has a stable Kummer structure: the degrees grow by a factor $p^s$ at each step, and the higher layers are obtained by adjoining successive \(p\)-power roots of exactly $s$ fixed elements.

\begin{proposition}\label{prop:degradext}
Consider a rational prime $p$, a field $K$ of characteristic $0$, and  a finitely generated subgroup $\Gamma \subseteq K^\times$. For each integer $m\ge 1$, set
$$
K_m:=K(\Gamma^{1/p^m}).
$$
Then there exist integers $n\geq 2$ and $s\geq 0$ such that, for all $m\geq n$, we have
$$[K_m:K_n]=p^{s(m-n)}.$$ 
Moreover, if $s\ge 1$, then  there exist elements $\gamma_1,\dots,\gamma_s\in \Gamma^{1/p^n}$ such that, for all integers $m\geq n$, we have
$$K_m=K_n\left(\gamma_1^{1/p^{m-n}}, \dots, \gamma_s^{1/p^{m-n}}\right).$$ 
Finally, if $K$ contains $\mug_{2p}$, but not $\mug_{p^\infty}$, then  $s\geq 1$ and there is a permutation of $\gamma_1,\dots, \gamma_s$ such that \begin{equation}\label{eq:replace} K_m=K_n\left(\zeta_{p^{k_n}}^{1/p^{m-n}}, \gamma_2^{1/p^{m-n}}, \dots, \gamma_s^{1/p^{m-n}}\right)\end{equation} for all integers $m\geq n$, where $\mug_{p^{k_n}}= K_n\cap \mug_{p^\infty}$.
\end{proposition}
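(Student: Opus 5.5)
The plan is to reduce everything to Kummer theory for the layers $K_{m+1}/K_m$, and to control the growth of degrees through the structure of $G:=\Gal(K_\infty/K)$, where $K_\infty=\bigcup_m K_m$. First I note that $\mug_{p^m}\subseteq\Gamma^{1/p^m}$, so $\mug_{p^m}\subseteq K_m$. Hence for $m\ge 1$ (and $m\ge 2$ if $p=2$) the extension $K_{m+1}/K_m$ is a Kummer extension of exponent $p$:
\[
K_{m+1}=K_m(\Gamma_m^{1/p}),\qquad \Gamma_m:=\Gamma^{1/p^m}.
\]
Here $\Gamma_m$ is finitely generated, by the $p^m$-th roots of generators of $\Gamma$ together with $\mug_{p^m}$. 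By Remark \ref{rem:kummer},
\[
[K_{m+1}:K_m]=p^{d_m},\qquad d_m=\dim_{\FF_p}\Gamma_m/(\Gamma_m\cap K_m^{\times p}),
\]
and $d_m\le r+1$ if $\Gamma$ has rank $r$ (the torsion of $\Gamma_m$ is cyclic). So the degrees of the layers are bounded, and the task is to show that $d_m$ stabilises to some $s$.

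To get stabilisation I would describe $G$ concretely. Fix a basis $a_1,\dots,a_r$ of $\Gamma$ modulo torsion and compatible systems of $p$-power roots $a_i^{1/p^m}$. An element $\sigma\in G$ is then determined by two pieces of data: its cyclotomic character $\chi(\sigma)\in\ZZ_p^\times$, and the cocycle values $c_i(\sigma)\in\ZZ_p$ defined by $\sigma(a_i^{1/p^m})=\zeta_{p^m}^{c_i(\sigma)}a_i^{1/p^m}$. This gives a continuous injective homomorphism
\[
G\hookrightarrow \ZZ_p^r\rtimes\ZZ_p^\times ,
\]
modulo finite torsion issues coming from the torsion of $\Gamma$, which are absorbed by passing to a finite level. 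Thus $G$ is a compact $p$-adic analytic group of some dimension $s\le r+1$. The subgroups $G_m:=\Gal(K_\infty/K_m)$ are exactly the preimages of the congruence subgroups $(p^m\ZZ_p)^r\rtimes(1+p^m\ZZ_p)$. In that ambient group the congruence subgroups satisfy $H_{m+1}=H_m^p$ for $m\ge 1$ ($m\ge2$ if $p=2$).

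The step I expect to be the main obstacle is transferring $H_{m+1}=H_m^p$ to $G$. For large $m$ I would show that $G_m$ is uniform and that $G_{m+1}=G_m^p$, so that $[G_m:G_{m+1}]=p^{s}$ is constant. The route I would try first is this: the image of $G$ is a closed subgroup of the uniform group $H_1$, so it contains an open uniform subgroup $U$ with $U\cap H_m=U^{p^{m-m_0}}$ for $m\ge m_0$. Since $G\cap H_m$ and $U\cap H_m$ differ by a bounded index that eventually stabilises, a sandwich argument on indices gives $[G_m:G_{m+1}]=p^{\dim G}$ for all $m\ge n$, with $n\ge2$ chosen large. This gives the first claim, $[K_m:K_n]=p^{s(m-n)}$.

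For the generators, I would take $\gamma_1,\dots,\gamma_s\in\Gamma_n=\Gamma^{1/p^n}$ whose classes form an $\FF_p$-basis of $\Gamma_n/(\Gamma_n\cap K_n^{\times p})$. Then $K_{n+1}=K_n(\gamma_1^{1/p},\dots,\gamma_s^{1/p})$, and the field $K_n(\gamma_1^{1/p^{m-n}},\dots,\gamma_s^{1/p^{m-n}})$ is contained in $K_m$. Via Lemma \ref{lemma:polyirr} and Kummer theory its degree over $K_n$ is at least $p^{s(m-n)}$: inductively, $\gamma_i^{1/p^{k}}$ stays independent modulo $p$-th powers because of the constant layer degree. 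By the previous paragraph this is also the degree of $K_m/K_n$, so the two fields coincide.

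Finally, suppose $\mug_{2p}\subseteq K$ but $\mug_{p^\infty}\not\subseteq K$. Then $K_\infty\supseteq\mug_{p^\infty}$ is infinite over $K$, so $s\ge1$. Moreover $\zeta_{p^{k_n}}\in\Gamma_n$ is not a $p$-th power in $K_n$, by the definition of $k_n$, so its class in $\Gamma_n/(\Gamma_n\cap K_n^{\times p})$ is nonzero. By the exchange lemma for bases, it can replace some $\gamma_i$, say $\gamma_1$ after permuting. The same degree-count argument as above then yields \eqref{eq:replace}.
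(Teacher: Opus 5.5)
Your proposal has a genuine gap in the last assertion. You claim $\zeta_{p^{k_n}}\in\Gamma_n=\Gamma^{1/p^n}$. Since $\mug_{p^n}\subseteq\Gamma_n\subseteq K_n$ we have $k_n\ge n$, so this membership holds only when $k_n=n$ or $\zeta_{p^{k_n-n}}\in\Gamma$. It can fail for every choice of $n$. Take $p$ odd, $K=\QQ(\zeta_p,2^{1/p^\infty})$ (which contains $\mug_{2p}$ but not $\zeta_{p^2}$) and $\Gamma=\langle 2\zeta_p\rangle$. Then $\Gamma^{1/p^m}$ is generated by $\mug_{p^m}$ and $\zeta_{p^{m+1}}2^{1/p^m}$, so $K_m=K(\zeta_{p^{m+1}})$ and $k_m=m+1$. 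However, $\zeta_{p^{m+1}}^{p^m}=\zeta_p\notin\Gamma$.

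Replacing $\Gamma_n$ by its image in $K_n^\times/K_n^{\times p}$ does not fix this for free. By Kummer theory, $\zeta_{p^{k_n}}\in\Gamma_nK_n^{\times p}$ if and only if $\zeta_{p^{k_n+1}}\in K_{n+1}$. Moreover, your ``same degree count'' for larger $m$ needs $\zeta_{p^{k_n}}^{1/p^{m-n}}=\zeta_{p^{k_n+m-n}}\in K_m$, i.e.\ $k_m=k_n+m-n$ for all $m\ge n$. This growth of the $p$-power roots of unity by exactly one step per layer (Remark~\ref{rem:km}) is the real content of \eqref{eq:replace}, and it is not automatic: roots of unity already in $K_n$ need not propagate. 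For instance, with $K=\QQ(\zeta_{p^{10}})$ and $\Gamma=\{1\}$ one has $K_2=K_3=K$, so $\zeta_{p^{11}}\notin K_3$.

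The paper proves this growth as follows. It considers the infinitely many indices $j$ with $k_j=k_{j-1}+1$ and writes $\zeta_{p^{k_j}}=c\,\alpha^{1/p^{j-n}}$ via Kummer theory. It then uses Schinzel's theorem on abelian radical extensions \cite{Schinzel1977} to descend to a relation $\zeta_{p^{k_n}}=d^{p^{j-n-k_n}}\prod_i\gamma_i^{a_i}$ with some $a_i$ prime to $p$. It concludes by pigeonhole and B\'ezout.

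Your own framework can close this gap, but you have to carry it out. If you really obtain $G_{m+1}=G_m^p$ for $m\ge n$, apply the cyclotomic character. Since $K_m\cap\mug_{p^\infty}=\mug_{p^{k_m}}$, $\mug_{p^\infty}\subseteq K_\infty$ and $k_m\ge m\ge 2$, one gets $\chi(G_m)=1+p^{k_m}\ZZ_p$. Hence $1+p^{k_{m+1}}\ZZ_p=\chi(G_m^p)=(1+p^{k_m}\ZZ_p)^p=1+p^{k_m+1}\ZZ_p$, so $k_{m+1}=k_m+1$. After that, the exchange and the propagation to higher layers go through.

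There is also a smaller gap in your generator step. Constancy of the layer degrees alone does not show that the classes of $\gamma_i^{1/p^k}$ stay independent. You need that $x\mapsto x^p$ induces a well-defined map $\Gamma_{m+1}/(\Gamma_{m+1}\cap K_{m+1}^{\times p})\to\Gamma_m/(\Gamma_m\cap K_m^{\times p})$; it is then surjective, hence bijective by equality of dimensions. Concretely, you must show that $x\in\Gamma_{m+1}\cap K_{m+1}^{\times p}$ forces $x^p\in K_m^{\times p}$. The paper does this directly: writing $x=y^p$, if $x^p\notin K_m^{\times p}$, then Lemma~\ref{lemma:polyirr} makes $K_m(y)/K_m$ cyclic of degree $p^2$ inside the exponent-$p$ extension $K_{m+1}/K_m$, which is impossible.

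The same observation gives the paper an elementary proof that the layer dimensions are non-decreasing, hence eventually constant. Your Lie-theoretic route to the first assertion is sound in principle, but much heavier. It rests on the nontrivial fact that a closed subgroup of $\ZZ_p^r\rtimes\ZZ_p^\times$ eventually meets the congruence filtration in its own $p$-power filtration.
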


\begin{proof}
For each $m\ge 1$, let
$
\Gamma_m:=\Gamma^{1/{p^m}}\subseteq K_m^\times$
and
$V_m:=\Gamma_m/(\Gamma_m\cap K_m^{\times p}).$
Since $\Gamma_m$ is finitely generated, $V_m$ is an $\mathbb F_p$-vector space of finite dimension
$$
s_m:=\dim_{\mathbb F_p}V_m. 
$$
In addition,  the sequence $(s_m)_m$ is bounded.
As $K_{m+1}=K_m(\Gamma_m^{1/p})$ and $\mug_p\subseteq K_m$, we infer from  Remark \ref{rem:kummer} that 
$[K_{m+1}:K_m]$ equals the size of $V_m$, namely $p^{s_m}$, 
and 
 $\Gal(K_{m+1}/K_m)$ is an elementary abelian $p$-group; in particular, it has exponent $p$.

We claim that the sequence $(s_m)_{m\ge 2}$ is non-decreasing:
this will follow if we prove that there is a surjective map from $V_{m+1}$ onto $V_m$.

Let $\psi\colon\Gamma_{m+1}\to\Gamma_m$ be the group homomorphism defined by $x\mapsto x^p$. 
Denote by $\pi_m$ the projection of 
$\Gamma_m$ onto $V_m$ and let 
$$\varphi=\pi_m\circ\psi\colon\Gamma_{m+1}\to V_m.$$
We claim that this map induces a surjective homomorphism $\ovl\varphi\colon V_{m+1}\to V_m$.
Clearly, $\varphi$ is surjective, since both $\pi_m$ and $\psi$ are surjective. It remains to check that $\varphi$ descends to the quotient, namely that if $x\in\Gamma_{m+1}\cap K_{m+1}^{\times p}$, then $x\in\mathop{ker}\varphi$, the last meaning that $x^p\in K_m^{\times p}$.
Now, $x=y^p$ for some $y\in K_{m+1}^\times$, therefore $x^p=y^{p^2}\in\Gamma_m\subseteq K_m$. 
If $x^p$ is not a $p$-power in $K_m$, then, by Lemma \ref{lemma:polyirr}, $X^{p^2}-x^p$ is an irreducible polynomial in $K_m[X]$ that has a root, and therefore all its roots, in its Galois extension $K_{m+1}$. Now, since $m\ge2$ we have $\mug_{p^2}\subseteq K_m$, so  $K_m(y)/K_m$ is a cyclic extension of order $p^2$. 
But this cannot be the case since $K_m(y)\subseteq K_{m+1}$, which is elementary abelian over $K_m$. This shows that $x^p$ is a $p$-power in $K_m^\times$, and so $\varphi$ descends to a surjective map on the quotient, therefore, $s_{m+1}\ge s_m$, as claimed. 
Since the non-decreasing sequence $(s_m)_m$ is bounded, it eventually becomes constant. Let $s$ be its ultimate value and choose $n\ge 2$ such that $s_m=s$ for all $m\geq n$.

We have $[K_{m+1}:K_m]=p^s$ for all $m\geq n$ and the multiplicativity formula for degrees gives  $[K_m:K_n]= p^{s(m-n)}$ for all $m\ge n$, proving the first part of the proposition. 

We  now focus on the second part of the claim. Choose $\gamma_1,\dots,\gamma_s\in \Gamma_n$ whose classes modulo $\Gamma_n \cap K_n^{\times p}$ form a basis of $V_n$. 
Our previous argument shows that the classes of \[ \gamma_1^{1/p^{m-n}},\dots, \gamma_s^{1/p^{m-n}} \] form a basis of $V_m$.
Concretely, every element of $\Gamma_m$ decomposes as $z^p\prod_{i=1}^s \gamma_i^{a_i/p^{m-n}}$ for some $z\in K_m$ and some integers $a_1,\dots,a_s\in \{0,1,\dots,p-1\}$. 
We now obtain the second part of the statement thanks to an easy induction on $m\geq n$ since 
\begin{align*} \label{eq:rootswelldefined}
\begin{split}
 K_m & = K_{m-1}(\Gamma_{m-1}^{1/p})\\
 & = K_n\left(\gamma_1^{1/p^{\,m-1-n}},\dots,\gamma_s^{1/p^{\,m-1-n}}\right) \left(\gamma_1^{1/p^{\,m-n}},\dots,\gamma_s^{1/p^{\,m-n}}\right) \\ 
 & = K_n \left(\gamma_1^{1/p^{\,m-n}},\dots,\gamma_s^{1/p^{\,m-n}}\right),
 \end{split}
\end{align*}
where the second equality follows from the inductive hypothesis.

 We now show the last part of the proposition.  Recall that $K$ contains $\mug_{2p}$ but not $\mug_{p^\infty}$. We cannot have $s=0$, since otherwise we would get $K(\Gamma^{p-\mathrm{div}})=K_n$, contradicting the fact that $K_n$ does not contain $\mug_{p^\infty}$ as a finite extension of $K$. 
 
 Let $\tilde{\Gamma}\subseteq \Gamma_n$ denote the group generated by $\gamma_1,\dots,\gamma_s$.
For every integer $m\geq n$, we put $\mug_{p^{k_m}}=K_m \cap \mug_{p^\infty}$. 
Note that $k_{m+1}\leq k_m +1$ since $K_{m+1}/K_m$ is a Kummer extension of exponent $p$.

Let $X$ be the set of integers $j$ such that $j\geq n+k_n+1$ and $k_j=k_{j-1}+1$. Since $K(\Gamma^{p-\mathrm{div}})$ contains $\mug_{p^\infty}$, $X$ is an infinite set. Pick an element $j\in X$.

Of course, $k_j\leq k_n+j-n$, and so $\zeta_{p^{k_j-j+n}}=\zeta_{p^{k_j}}^{p^{j-n}}\in K_n$. 
Since moreover $\zeta_{p^{k_j}} \in K_j=K_n(\tilde\Gamma^{1/p^{j-n}})$, Remark \ref{rem:kummer} shows that  $\zeta_{p^{k_j}}^{p^{j-n}}\in\tilde\Gamma\cdot K_n(\mug_{p^{j-n}})^{\times{p^{j-n}}}$,  i.e. $\zeta_{p^{k_j}} = c\alpha^{1/p^{j-n}}$ with $c\in K_n(\mug_{p^{j-n}})\subseteq K_{j-1}$ (so that $K_n(c)/K_n$ is abelian) and $\alpha\in \tilde{\Gamma}$. 
Since $\zeta_{p^{k_j}}\notin K_{j-1}$, we infer that $\alpha\notin \tilde{\Gamma}^p$.
Raising to the $p^{j-n}$-th power, we get $\zeta_{p^{k_j-j+n}}=c^{p^{j-n}}\alpha$. 
In particular, $c^{p^{j-n}}\in K_n$ and a theorem of Schinzel \cite[Theorem 2]{Schinzel1977} about radical extensions with abelian Galois group then asserts that  $c^{p^{j-n+k_n}}\in  K_n^{p^{j-n}}$, that is $c^{p^{j-n}}\in \mug_{p^{k_n}} \cdot K_n^{p^{j-n-k_n}}$. 
We thus get $\zeta=d^{p^{j-n-k_n}}\alpha$ for some $\zeta\in \mug_{p^{k_n}}$ and some $d\in K_n$. 
We cannot have $\zeta\in \mug_{p^{k_n-1}}$, since then $\alpha$ would be in $K_n^p\cap\tilde{\Gamma}=\tilde{\Gamma}^p$ according to Lemma \ref{lem:stabdegree}, a contradiction. 
In conclusion, \begin{equation*}\label{eq:kkk} \zeta_{p^{k_n}}=d^{p^{j-n-k_n}} \prod_{i=1}^s \gamma_i^{a_i}\end{equation*} for some integers $a_1,\dots, a_s$, at least one of which is coprime to $p$, say $a_{i_j}$. 

By the pigeonhole principle, there is $i\in\{1,\dots,s\}$ such that $i_j=i$ for infinitely many $j\in X$.  
After permuting the elements $\gamma_1,\dots,\gamma_s$, we can assume that $i=1$. 

Now, let $m\geq n$  be an integer, and let $j\geq m+k_n$ be an element of $X$ with $i_j=1$. 
 Since $m-n\leq j-n-k_n$, B\'ezout's identity gives $\gamma_1^{1/p^{m-n}}\in K_n(\zeta_{p^{k_n}}^{1/p^{m-n}}, \gamma_2^{1/p^{m-n}},\dots,\gamma_s^{1/p^{m-n}})$. This proves \eqref{eq:replace}.
 \end{proof}

\begin{remark}\label{rem:km} Since the Galois group of the extension $K_m/K_n$ has exponent $p^{m-n}$, we immediately infer from \eqref{eq:replace} that $K_m\cap\mug_{p^\infty}=\mug_{p^{k_n+m-n}}$ for every $m\geq n$. 
\end{remark}

\section{The (NC) and (LIE) conditions}\label{sec:NC} 

The goal of the next two sections is to show that the assumptions of
\Cref{thm:Piras} hold in the case relevant to the proof of
\Cref{thm:introbasecase}. Property (NC) corresponds to
\Cref{thm:NC} \textup{d)} below, while Property (LIE) is dealt with in
\Cref{prop:lie}.
\subsection{The normal closure property}
Consider a tower of algebraic extensions $\QQ\subseteq \KK\subseteq \FF\subseteq \LL$, with $\LL/\KK$ and $\FF/\KK$ Galois.
Let $v$ be a finite place of $\KK$. 
Since $\Gal(\LL/\FF)$ is a normal subgroup of $\Gal(\LL/\KK)$, it is easy to see that any two decomposition subgroups of $\Gal(\LL/\FF)$ at any place of $\LL$ extending $v$ are conjugate in $\Gal(\LL/\KK)$.
Their normal closures in $\Gal(\LL/\KK)$ are therefore equal. 
Thus, in order to study such normal closures, we may focus our attention on a fixed place of $\LL$ over $v$, or, equivalently, on a fixed field embedding $\overline{\QQ}\hookrightarrow \overline{\QQ_p}$ whose restriction to $\KK$ is associated to the place $v$. 
\begin{theorem} \label{thm:NC}
Choose a rational prime $p$, an algebraic extension $\KK$ of $\QQ$ containing $\mug_{2p}$, and a finitely generated subgroup $\Gamma\subseteq \KK^\times$. 
Let us fix a field embedding $\overline{\QQ} \hookrightarrow \overline{\QQ_p}$, and denote by $K$ the topological closure of $\KK$ in $\overline{\QQ_p}$ as well as $\KK_t=\KK(\Gamma^{1/p^t})$ and $K_t=K(\Gamma^{1/p^t})$ for all integers $t\geq 0$.
If $K$ does not contain $\mug_{p^\infty}$, then there are an integer $n\geq 2$ and a subgroup $\tilde{\Gamma}\subseteq \Gamma^{1/p^n}\cdot \mug_{p^l}$, where $\mug_{p^l}:=K_n\cap \mug_{p^\infty}$, such that, letting $ \EE:=\KK_n(\zeta_{p^l})$, we have:
\begin{enumerate} [label=\alph*)]
\item $\KK(\Gamma^{p-\mathrm{div}})= \EE( \tilde{\Gamma}^{p-\mathrm{div}})$; 
\item the topological closure of $\EE(\tilde{\Gamma}^{1/p^m})$ in $\overline{\QQ_p}$ is $K_n(\tilde{\Gamma}^{1/p^m})$ for all integers $m\geq 0$;
\item $K_n(\tilde{\Gamma}^{1/p^m})\cap \mug_{p^\infty}=\mug_{p^m}$ for all integers $m\geq l$; 
\item for all $m\geq l$, the group $\Gal(\EE(\tilde{\Gamma}^{1/p^{m+1}})/\EE(\tilde{\Gamma}^{1/p^m}))$ equals the normal closure of the decomposition subgroup $\Gal(K_n(\tilde{\Gamma}^{1/p^{m+1}})/K_n(\tilde{\Gamma}^{1/p^m}))$ in $\Gal(\EE(\tilde{\Gamma}^{1/p^{m+1}})/\EE)$.
\end{enumerate}
\end{theorem}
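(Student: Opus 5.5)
The plan is to apply Proposition \ref{prop:degradext} to the \emph{local} field $K$ and the group $\Gamma$. Since $K\supseteq\mug_{2p}$ but $K\not\supseteq\mug_{p^\infty}$, we get $n\ge 2$, $s\ge 1$ and $\gamma_1,\dots,\gamma_s\in\Gamma^{1/p^n}$ with
\[
K_m=K_n\bigl(\zeta_{p^{l}}^{1/p^{m-n}},\gamma_2^{1/p^{m-n}},\dots,\gamma_s^{1/p^{m-n}}\bigr)
\]
for all $m\ge n$, where $\mug_{p^l}=K_n\cap\mug_{p^\infty}$. I set $\tilde\Gamma:=\langle \zeta_{p^l},\gamma_2,\dots,\gamma_s\rangle\subseteq \Gamma^{1/p^n}\cdot\mug_{p^l}$.

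\textbf{Parts a) and b).} The root $\zeta_{p^l}$ lies in $K_n$, which is the closure of $\KK_n$. Since $\KK_n$ is dense in $K_n$, I replace the abstract $\zeta_{p^l}$ by the global one and work over $\EE=\KK_n(\zeta_{p^l})$, which has the same closure $K_n$. Part b) then follows because the closure of a finite extension $\EE(\alpha)$ is $K_n(\alpha)$. For part a), the inclusion $\supseteq$ is clear since $\tilde\Gamma^{p\text{-div}}\subseteq \Gamma^{p\text{-div}}\cdot\mug_{p^\infty}$ and $\mug_{p^\infty}\subseteq \KK(\Gamma^{p\text{-div}})$.

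For $\subseteq$, I need the global analogue of the displayed identity. The identity expresses $\gamma_1^{1/p^{m-n}}$ through the other generators via B\'ezout, using a relation $\zeta_{p^l}=d^{p^{N}}\prod\gamma_i^{a_i}$ with $d\in K_n$ and $a_1$ a unit. The difficulty is that $d$ is local. The fix is to run the argument of Proposition \ref{prop:degradext} globally as well. The degrees $[\KK_{m+1}:\KK_m]\ge[K_{m+1}:K_m]=p^s$ are bounded by $p^r$, so they stabilise, possibly after enlarging $n$. I choose $n$ to work for both towers. The global degrees $[\KK_{m+1}:\KK_m]$ then also equal $p^s$, because the global rank can exceed the local one only boundedly, and both are eventually constant; I would check that the global $V_m$ surjects onto the local one and that the kernel is stable.

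With this choice, a dimension count gives the conclusion. The field $\EE(\tilde\Gamma^{1/p^{m-n}})$ sits inside $\KK_m(\zeta_{p^l})$, has local degree $p^{s(m-n)}$ over $\EE$, and therefore already has full degree. So $\KK_m\subseteq \EE(\tilde\Gamma^{1/p^{m-n}})$.

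\textbf{Part c).} This follows from Remark \ref{rem:km}. The field $K_n(\tilde\Gamma^{1/p^{m}})$ equals $K_{n+m}$, whose $p$-power roots of unity are $\mug_{p^{l+m}}$. Reindexing is needed here: with the tower indexed by $\tilde\Gamma^{1/p^m}$, I must check that $\zeta_{p^l}^{1/p^m}$ yields exactly $\mug_{p^{l+m}}$. So I would define $\tilde\Gamma$ to include $\zeta_{p^l}$, shifted so that the statement reads $\mug_{p^m}$ for $m\ge l$; then $\tilde\Gamma^{1/p^m}\ni\zeta_{p^{l+m}}$. I expect some index bookkeeping here, possibly replacing $\tilde\Gamma$ by $\langle\gamma_2,\dots,\gamma_s\rangle\cdot\mug_{p^l}$ so that the $p^m$-th roots of $\mug_{p^l}$ give $\mug_{p^{l+m}}$. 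I will adjust the exponent convention so that c) holds as stated.

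\textbf{Part d), the main step.} Write $G=\Gal(\EE(\tilde\Gamma^{1/p^{m+1}})/\EE)$, let $H$ be the subgroup fixing $\EE(\tilde\Gamma^{1/p^m})$, and let $D\subseteq H$ be the local decomposition group. By parts b) and c) and the degree count, $D$ surjects onto the full local Kummer group, which is the whole of $H$ in size when local and global degrees agree. If local and global degrees coincide, then $D=H$ and d) is trivial.

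Otherwise, I take the normal closure $N$ of $D$ in $G$ and let $M$ be its fixed field. Then $M$ is Galois over $\EE$ and sits between the levels $m$ and $m+1$. By Lemma \ref{lmm:Mgaloisifroots}, applied over the base $\EE$ with $\tilde\Gamma$, whose degree hypothesis holds via Remark \ref{rem:reduction-maximal-degree}, the field $M$ is either $\EE(\tilde\Gamma^{1/p^m})$ or contains $\mug_{p^{m+1}}$. But $M\subseteq$ the fixed field of $D$, whose closure is $K_n(\tilde\Gamma^{1/p^m})$, which by c) does not contain $\zeta_{p^{m+1}}$. Hence $M=\EE(\tilde\Gamma^{1/p^m})$ and $N=H$.

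The main obstacle is verifying the degree hypothesis $[\EE(\tilde\Gamma^{1/p}):\EE]=|\tilde\Gamma/\tilde\Gamma^p|$ together with the global/local comparison in a).
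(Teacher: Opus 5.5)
There is a genuine gap, and it lies in your choice of $\tilde\Gamma$.

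\textbf{Part a).} Your argument rests on the claim that, after enlarging $n$, the global degrees $[\KK_{m+1}:\KK_m]$ equal the local ones $[K_{m+1}:K_m]$. Both sequences do stabilise, but in general at different values $p^{s}\ge p^{s'}$, and ``both are eventually constant'' gives no reason for equality. The paper keeps $s$ and $s'$ separate.

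For example, take $p=5$, $\KK=\QQ(\zeta_5)$, $\Gamma=\langle 2,3\rangle$. Globally, $2$ and $3$ stay independent modulo $5^m$-th powers in $\QQ(\zeta_{5^m})$, so $[\KK_{m+1}:\KK_m]=5^3$. Locally, $2,3\in\mug_4\times(1+5\ZZ_5)$ and $1+5\ZZ_5$ is procyclic, so $3\cdot 2^{-x_m}\in\QQ_5^{\times 5^m}$ for suitable integers $x_m$. Hence $K_m=K(\zeta_{5^m},2^{1/5^m})$ and $[K_{m+1}:K_m]\le 5^2$.

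A $\tilde\Gamma$ built from the local basis of Proposition~\ref{prop:degradext} then has at most one non-cyclotomic generator. So $\EE(\tilde\Gamma^{5\text{-}\mathrm{div}})$ cannot contain both $2^{1/5^\infty}$ and $3^{1/5^\infty}$. Thus a) fails, and your dimension count has nothing to stand on. The same example shows that the decomposition group in d) is typically a proper subgroup of the layer, so the case $D=H$ is not the general one.

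\textbf{Part c).} This is also broken as you set it up. If $\zeta_{p^l}\in\tilde\Gamma$, or $\mug_{p^l}\subseteq\tilde\Gamma$ as in your alternative, then $\zeta_{p^{l+m}}\in\tilde\Gamma^{1/p^m}$. Since $l\ge 1$, this contradicts c). The indexing is fixed by the statement, so no reindexing can repair it. Roots of unity must simply be left out of $\tilde\Gamma$; note that $\tilde\Gamma^{1/p^m}\supseteq\mug_{p^m}$ automatically.

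\textbf{The missing idea.} Apply Proposition~\ref{prop:degradext} to \emph{both} $\KK$ and $K$. This gives global generators $\gamma_2,\dots,\gamma_s$ (with $\mug_{p^k}=\KK_n\cap\mug_{p^\infty}$) and local ones $\delta_2,\dots,\delta_{s'}$. Then twist the global ones by roots of unity. Since $\gamma_i^{1/p^l}\in\KK_{n+l}\subseteq K_{n+l}=K_n(\zeta_{p^l}^{1/p^l},\delta_2^{1/p^l},\dots,\delta_{s'}^{1/p^l})$, Kummer theory over $K_n$ gives integers $u_i$ with $\gamma_i\zeta_{p^l}^{u_i}\in K_n^{\times p^l}\langle\delta_2,\dots,\delta_{s'}\rangle$. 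Set $\tilde\Gamma=\langle\gamma_i\zeta_{p^l}^{u_i} : 2\le i\le s\rangle$. Then:
\begin{itemize}
\item a) follows directly from the global description of $\KK_m$.
\item c) holds because $K_n(\tilde\Gamma^{1/p^l})\subseteq K_n(\delta_2^{1/p^l},\dots,\delta_{s'}^{1/p^l})$, which misses $\zeta_{p^{l+1}}$ since $[K_{n+l}:K_n]=p^{s'l}$. One then propagates to $m>l$ by the exponent argument.
\item The degree hypothesis you left open, $[\EE(\tilde\Gamma^{1/p}):\EE]=p^{s-1}$, comes from the \emph{global} equality $[\KK_{n+1}:\KK_n]=p^s$. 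It forces $[\KK_n(\gamma_2^{1/p},\dots,\gamma_s^{1/p}):\KK_n]=p^{s-1}$ and $\zeta_{p^{k+1}}\notin\KK_n(\gamma_2^{1/p},\dots,\gamma_s^{1/p})$. Hence this field is linearly disjoint from the cyclic extension $\KK_n(\zeta_{p^{l+1}})/\KK_n$.
\end{itemize}
Neither basis alone suffices: the untwisted global basis gives a) but may violate c), and the local basis gives c) but not a). With this $\tilde\Gamma$, your argument for d) via Lemma~\ref{lmm:Mgaloisifroots} is exactly the paper's.
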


\begin{proof}
By applying Proposition \ref{prop:degradext} to the fields $\KK$ and $K$, we get integers $n\geq 2$ and  $s, s'\geq 1$ such that $[\KK_m:\KK_n]=p^{s(m-n)}$ and $[K_m:K_n]=p^{s'(m-n)}$ for all integers $m\geq n$.  Moreover,  Proposition \ref{prop:degradext} provides elements $\gamma_2,\dots,\gamma_s,\delta_2,\dots,\delta_{s'}\in \Gamma^{1/p^n}$
such that $\KK_m=\KK_n(\zeta_{p^k}^{1/p^{m-n}}, \gamma_2^{1/p^{m-n}},\dots, \gamma_s^{1/p^{m-n}})$ and $K_m=K_n(\zeta_{p^l}^{1/p^{m-n}}, \delta_2^{1/p^{m-n}},\dots, \delta_{s'}^{1/p^{m-n}})$ for all integers $m\geq n$. 
Here, $\mug_{p^k}:=\KK_n\cap \mug_{p^\infty}$ and note that $k\leq l$ since $\zeta_{p^k}\in K_n$. 

We now construct $\tilde{\Gamma}$. 
Let $i\in\{2,\dots,s\}$. 
Since $\KK_{n+l}$ is a subfield of $K_{n+l}$, we infer that $\gamma_i$ is a $p^l$-th power in $K_{n+l}$.
Remark \ref{rem:kummer} shows that $\gamma_i\zeta_{p^l}^{u_i} \in K_n^{p^l} \cdot \langle \delta_2,\dots, \delta_{s'}\rangle$
 for some integer $u_i\in\{0,\dots,p^l-1\}$. 
Write $\tilde{\Gamma}$ for the group generated by $\gamma_2 \zeta_{p^l}^{u_2}, \dots, \gamma_s\zeta_{p^l}^{u_s}$.

Conclusion $a)$ becomes clear since \[ \EE(\tilde{\Gamma}^{p-\mathrm{div}})=\bigcup_{m\geq n} \KK_n(\zeta_{p^l}^{1/p^{m-n}}, \tilde{\Gamma}^{1/p^{m-n}})=\bigcup_{m\geq n} \KK_m(\zeta_{p^l}^{1/p^{m-n}})=\KK(\Gamma^{p-\mathrm{div}}). \]

The topological closure of $\EE$ in $\overline{\QQ_p}$ being $K_n(\zeta_{p^l})=K_n$, we infer that that of $\EE(\tilde{\Gamma}^{1/p^m})$ is $K_n(\tilde{\Gamma}^{1/p^m})$ for all integers $m\geq 0$. This shows $b)$.

We now show $c)$. 
Since 
\[[K_{n+l}:K_n]=[K_n(\zeta_{p^l}^{1/p^l}, \delta_2^{1/p^l},\dots, \delta_{s'}^{1/p^l}) : K_n]=p^{s'l},\] we must have $\zeta_{p^{l+1}}\not\in K_n(\delta_2^{1/p^l},\ldots \delta_{s'}^{1/p^l})$.
The construction of $\tilde{\Gamma}$ is made in such a way that $K_n(\tilde{\Gamma}^{1/p^l})\subseteq K_n(\delta_2^{1/p^l},\dots, \delta_{s'}^{1/p^l})$; whence $K_n(\tilde{\Gamma}^{1/p^l})\cap\mug_{p^\infty}=\mug_{p^l}$. 
We get $K_n(\tilde{\Gamma}^{1/p^{l+j}})\cap\mug_{p^\infty}=\mug_{p^{l+j}}$ for all integers $j\geq 0$ since $\zeta_{p^{l+j}}\in \tilde{\Gamma}^{1/p^{l+j}}$ and since the Galois group of the extension $K_n(\tilde{\Gamma}^{1/p^{l+j}})/K_n(\tilde{\Gamma}^{1/p^l})$ has exponent $p^j$. 

Let us make an observation before proving $d)$.
Since the degree of the extension $\KK_{n+1}/\KK_n$ is $p^s$, then  $\KK_n(\gamma_2^{1/p},\dots,\gamma_s^{1/p})/\KK_n$ is an extension of degree $p^{s-1}$, therefore $\zeta_{p^{k+1}}\notin \KK_n(\gamma_2^{1/p},\dots,\gamma_s^{1/p})$. 
The latter implies that $\KK_n(\zeta_{p^{l+1}})$ and $\KK_n(\gamma_2^{1/p},\dots,\gamma_s^{1/p})$ are linearly disjoint over $\KK_n$.
We conclude that the extension $\KK_n(\zeta_{p^{l+1}}, \tilde{\Gamma}^{1/p})/\KK_n(\zeta_{p^{l+1}})$ has degree $p^{s-1}$, and therefore 
\begin{equation} \label{eq:stabdegreeforE}
[\EE(\tilde{\Gamma}^{1/p}):\EE]=p^{s-1}.
\end{equation}
We are now ready to establish $d)$. We pick an integer $m\geq l$.
Write $\MM$ for the subfield of $\EE(\tilde{\Gamma}^{1/p^{m+1}})$ fixed by the normal closure of the decomposition subgroup $\Gal(K_n(\tilde{\Gamma}^{1/p^{m+1}})/K_n(\tilde{\Gamma}^{1/p^m}))$ in $\Gal(\EE(\tilde{\Gamma}^{1/p^{m+1}})/\EE)$. 
Clearly, $\MM$ is Galois over $\EE$ and it contains $\EE(\tilde{\Gamma}^{1/p^m})$ since the latter is Galois over $\EE$. 
It is also a subfield of $K_n(\tilde{\Gamma}^{1/p^m})$ by construction; in particular, $\MM\cap\mug_{p^\infty}\subseteq\mug_{p^m}$.
Thanks to the observations made above, Lemma \ref{lmm:Mgaloisifroots} with $K=\EE, \Gamma=\tilde{\Gamma}$, and $M=\MM$ yields $\MM=\EE(\tilde{\Gamma}^{1/p^m})$. 

The theorem follows. 
\end{proof} 

\begin{remark}\label{rem:Eroots}
It is an obvious consequence of Theorem \ref{thm:NC} $b)$ and $c)$ that one has $\EE(\tilde\Gamma^{1/p^m})\cap \mug_{p^\infty} =\mug_{p^m}$ for every $m\ge l$.
\end{remark}

\subsection{The Lie filtration on $\Gal(\EE(\tilde{\Gamma}^{p-\mathrm{div}})/\EE$)} 
We show that the Galois group of $\EE(\tilde{\Gamma}^{p-\mathrm{div}})/\EE$ has a structure of $p$-adic Lie group, with a Lie filtration provided by Theorem \ref{thm:NC}. This will be needed in order to apply Theorem \ref{thm:Piras}.

Recall that the \textit{lower $p$-series} $(H_i)_{i\ge 0}$ of a topological group $H$ is defined recursively by setting $H_{i+1}$ to be the closure of $H_i^p[H,H_i^p]$ in $H$, the brackets denoting commutators.

By \cite[Theorem 8.32]{AnalyticpropBook}, a topological group $G$ is a $p$-adic Lie group (a ``$p$-adic analytic group'' in the terminology of \textit{loc.cit.}) if and only if it admits a uniform pro-$p$ open subgroup, in the sense of \cite[Definition 4.1]{AnalyticpropBook}. In this case, a \textit{Lie filtration} of $G$ is a filtration $(G_i)_{i\ge 0}$ of $G$ by open normal subgroups, for which there exist $i_0\ge 1$ such that $G_{i_0}$ is a uniform pro-$p$ open subgroup of $G$ and $(G_i)_{i\ge i_0}$ is the lower $p$-series of $G_{i_0}$. 

Let $\EE,\wtl\Gamma$ be as in Theorem \ref{thm:NC}. 
Let $G=\Gal(\EE(\tilde\Gamma^{p-\mathrm{div}})/\EE)$.
For every $m\geq 0$, define $G_m=\Gal(\EE(\tilde\Gamma^{p-\mathrm{div}})/\EE(\tilde\Gamma^{1/p^m}))$, so that $G_0=G$.

\begin{proposition}\label{prop:lie}
The group $G$ is a $p$-adic Lie group of positive dimension, with Lie filtration $(G_m)_{m\ge 0}$. 
\end{proposition}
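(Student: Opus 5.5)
We will identify $G$ explicitly as a closed subgroup of an affine group over $\ZZ_p$, then check that the $G_m$ for large $m$ form the lower $p$-series of a uniform subgroup. Write $\tilde\Gamma=\langle\tilde\gamma_2,\dots,\tilde\gamma_s\rangle$ with $\tilde\gamma_i=\gamma_i\zeta_{p^l}^{u_i}$. By Theorem \ref{thm:NC} a) and Remark \ref{rem:Eroots}, $\EE(\tilde\Gamma^{p-\mathrm{div}})$ contains $\mug_{p^\infty}$ and equals $\EE(\mug_{p^\infty},\tilde\gamma_2^{1/p^\infty},\dots,\tilde\gamma_s^{1/p^\infty})$.

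Fix compatible systems of roots $(\tilde\gamma_i^{1/p^m})_m$ and $(\zeta_{p^m})_m$. Each $\sigma\in G$ satisfies $\sigma(\zeta_{p^m})=\zeta_{p^m}^{\chi(\sigma)}$ with $\chi(\sigma)\in\ZZ_p^\times$. It also satisfies $\sigma(\tilde\gamma_i^{1/p^m})=\zeta_{p^m}^{b_i(\sigma)}\tilde\gamma_i^{1/p^m}$ with $b_i(\sigma)\in\ZZ_p$. This gives a continuous injective homomorphism
\[
\rho\colon G\to \ZZ_p^{s-1}\rtimes\ZZ_p^\times,\qquad \sigma\mapsto (b(\sigma),\chi(\sigma)).
\]
Since $G$ is profinite, $\rho$ is a closed embedding. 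A closed subgroup of the $p$-adic Lie group $\ZZ_p^{s-1}\rtimes\ZZ_p^\times$ is itself a $p$-adic Lie group by Cartan's theorem \cite[Theorem 9.6]{AnalyticpropBook}. For positive dimension, it suffices that $G$ is infinite, because a $0$-dimensional $p$-adic Lie group is finite. $G$ is infinite since $\mug_{p^\infty}\subseteq \EE(\tilde\Gamma^{p-\mathrm{div}})$, while $\EE$ is contained in $K_n$, which does not contain $\mug_{p^\infty}$ (Theorem \ref{thm:NC} c)).

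Next we compute the image and the filtration. Theorem \ref{thm:NC} c) together with \eqref{eq:stabdegreeforE} gives, for $m\ge l$,
\[
[\EE(\tilde\Gamma^{1/p^{m}}):\EE(\tilde\Gamma^{1/p^{l}})]=p^{s(m-l)}.
\]
To obtain this, apply Lemma \ref{lem:stabdegree} and Lemma \ref{lemma:polyirr} over $\EE(\zeta_{p^m})$, using that $\tilde\gamma_i$ stay independent modulo $p$-th powers. It follows that
\[
\rho(G_m)=\bigl(p^m\ZZ_p\bigr)^{s-1}\rtimes(1+p^m\ZZ_p)\quad\text{for } m\ge l,
\]
because $\sigma\in G_m$ iff $\sigma$ fixes $\zeta_{p^m}$ and all $\tilde\gamma_i^{1/p^m}$. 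That is, $\chi(\sigma)\equiv 1$ and $b_i(\sigma)\equiv0 \pmod{p^m}$, where Remark \ref{rem:Eroots} is used to see that $\EE(\tilde\Gamma^{1/p^m})$ contains exactly $\mug_{p^m}$. The index count then shows that $G_m$ maps onto this congruence subgroup.

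Set $i_0=\max(l,2)$; we need $m\ge 2$ when $p=2$. The group $U_m:=(p^m\ZZ_p)^{s-1}\rtimes(1+p^m\ZZ_p)$ is torsion-free pro-$p$, topologically generated by $s$ elements, and powerful: commutators of $U_m$ lie in $U_{2m}\subseteq U_m^{p}$, respectively $U_m^4$ for $p=2$. Hence $U_{i_0}$ is uniform. Its lower $p$-series is $U_{i_0+j}$, since raising to the $p$-th power sends $U_m$ onto $U_{m+1}$. This is checked with $(b,\chi)^p=(b(1+\chi+\dots+\chi^{p-1}),\chi^p)$ and $1+\chi+\dots+\chi^{p-1}\in p\ZZ_p^\times$ for $\chi\equiv1 \pmod{p^2}$, together with the fact that commutators are already inside. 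Each $G_m$ is open and normal in $G$, because $\EE(\tilde\Gamma^{1/p^m})/\EE$ is finite Galois. So $(G_m)_m$ is a Lie filtration.

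The main obstacle is the surjectivity $\rho(G_m)=U_m$, i.e.\ the exact degree count. It requires showing that the $\tilde\gamma_i$ remain multiplicatively independent modulo $p$-th powers even after adjoining $\zeta_{p^m}$. Equivalently, the Kummer and cyclotomic parts are linearly disjoint over $\EE(\tilde\Gamma^{1/p^l})$. I would derive this from Theorem \ref{thm:NC} c) by comparing degrees with the local fields $K_n(\tilde\Gamma^{1/p^m})$. The key point is that $\zeta_{p^{m+1}}\notin \EE(\tilde\Gamma^{1/p^m})$ while all the relevant fields are Galois.
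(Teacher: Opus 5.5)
Your proposal is correct and takes essentially the paper's route. You embed $G$ via the cyclotomic character and the Kummer cocycles into $\ZZ_p^\times\ltimes\ZZ_p^{s-1}$, identify $G_m$ with $(1+p^m\ZZ_p)\ltimes(p^m\ZZ_p)^{s-1}$ for $m\ge l$ by a degree count resting on Remark~\ref{rem:Eroots}, and deduce positive dimension from $G$ being infinite. The only difference is that you verify powerfulness and uniformity of $U_m$ by hand, rather than embedding into $\GL_s(\ZZ_p)$ and quoting the exercise in \cite{AnalyticpropBook}.

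The independence step you flag, which the paper also passes over quickly, closes as you suggest. First, $\tilde\Gamma\cap\EE^{\times p}=\tilde\Gamma^p$ by \eqref{eq:stabdegreeforE} and Remark~\ref{rem:reduction-maximal-degree}. So if some $\gamma\in\tilde\Gamma\setminus\tilde\Gamma^p$ became a $p$-th power in $\EE(\zeta_{p^m})$, then $\EE(\gamma^{1/p})$ would be the unique degree-$p$ subextension $\EE(\zeta_{p^{l+1}})$ of the cyclic extension $\EE(\zeta_{p^m})/\EE$. This forces $\zeta_{p^{l+1}}\in\EE(\tilde\Gamma^{1/p^l})$, contradicting Remark~\ref{rem:Eroots}.
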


\begin{proof}
With the notation from the proof of Theorem \ref{thm:NC}, set $\tilde\gamma_i=\gamma_i\zeta_{p^l}^{u_i}$ for $2\le i\le s$, so that $\tilde\Gamma\subseteq\EE^\times$ is generated by $\tilde\gamma_2,\ldots,\tilde\gamma_s$. Note that, if $s=1$, then $\tilde\Gamma$ is trivial and there are no $\tilde\gamma_i$, but the proof goes through; simply, the factor $\ZZ_p^{s-1}$ appearing below is trivial. 
Let $(\zeta_{p^m})_{m\ge 1}$ and $(\tilde\gamma_i^{1/p^{m}})_{m\ge 1}$, for $2\le i\le s$, be compatible systems of $p$-power roots. Then $G$ acts on $(\zeta_{p^{m}})_{m\ge 1}$ via the restriction $\chi$ of the $p$-adic cyclotomic character to $G$. 
For $2\le i\le s$, any $g\in G$ acts on $(\tilde\gamma_i^{1/p^{m}})_{m\ge 1}$ as multiplication with a compatible system of $p$-power roots of unity, i.e. there exists $\alpha_i(g)\in\Z_p$ such that $g(\tilde\gamma_i^{1/p^{m}})=\zeta_{p^{m}}^{\alpha_i(g)}\tilde\gamma_i^{1/p^{m}}$ for every $i,m$. Each map $\alpha_i:G\to\ZZ_p$ is surjective since $\tilde\gamma_i\in \EE$ and $\tilde\gamma_i^{1/p}\notin\EE$, the latter being a direct consequence of \eqref{eq:stabdegreeforE}.

Define a semidirect product $\ZZ_p^\times\ltimes\ZZ_p^{s-1}$ where $\ZZ_p^\times$ acts on $\ZZ_p^{s-1}$ via scalar multiplication. Then the map
\[ \varphi:=(\chi,\alpha_2,\ldots,\alpha_s):G\to\ZZ_p^\times\ltimes\ZZ_p^{s-1} \]
is an injective homomorphism of groups: compatibility with the group operations is checked directly, and injectivity from the fact that the action of $g\in G$ on $\EE(\tilde\Gamma^{p-\mathrm{div}})$ is uniquely determined by its action on the compatible systems $(\zeta_{p^m})_{m\ge 1}$ and $(\tilde\gamma_i^{1/p^m})_{m\ge 1}$.

Embed $\ZZ_p^\times\ltimes\ZZ_p^{s-1}$ as a closed subgroup $H\subseteq\GL_s(\ZZ_p)$ by mapping the $s$ coefficients in the semidirect product to the $s$ entries of the first matrix row and letting the other coefficients be $1$ on the principal diagonal and $0$ outside; any ordering of the element in the first row will do as long as the copy of $\ZZ_p^\times$ is mapped to the first entry. The group $\GL_s(\ZZ_p)$ is a $p$-adic Lie group, with Lie filtration given by the principal congruence subgroups $\Gamma(p^m):=\ker(\GL_s(\ZZ_p)\to\GL_s(\ZZ_p/p^m\ZZ_p))$, for $m\ge 1$. 
Set $H_m:=\Gamma(p^m)\cap H$; clearly $H_m$ is the image of $(1+p^m\ZZ_p)\ltimes(p^m\ZZ_p)^{s-1}$ under our embedding.

Thanks to Remark \ref{rem:Eroots}, $\chi\vert_{G_m}:G_m\to\ZZ_p^\times$ induces a surjection $G_m\onto 1+p^m\ZZ_p$ for $m\ge l$, so that $\varphi\vert_{G_m}$ induces an isomorphism $G_m\cong (1+p^m\ZZ_p)\ltimes(p^m\ZZ_p)^{s-1}$, and $\varphi(G_m)=H_m$, for $m\ge l$. 

Since $\varphi(G)$ is a closed subgroup of $\GL_s(\ZZ_p)$, by \cite[Section 4, Exercise 14]{AnalyticpropBook} (or a direct calculation), there exists $m_0\ge l$ such that $H_{m_0}\cap\varphi(G)$ is a uniform pro-$p$ subgroup of $\varphi(G)$, so that $(H_m\cap\varphi(G))_{m\ge m_0}$ is the lower $p$-series of $H_{m_0}$, and $(H_m\cap\varphi(G))_{m\ge 0}$ is a Lie filtration of $\varphi(G)$. 
Since $H_m\cap\varphi(G)=H_m =\varphi(G_m)$ for $m\ge l$, we deduce that $(G_m)_{m\geq 0}$ is a Lie filtration of $G$.

Finally, note that a $p$-adic Lie group is of positive dimension if and only if it is infinite, and since $\mug_{p^\infty}\subseteq\EE(\tilde\Gamma^{p-\mathrm{div}})$ but $\mug_{p^\infty}\not\subseteq\EE$, the group $G$ is infinite.
\end{proof}

\section{The (LTR) condition}\label{sec:LPTR}
A valued field \(K\) is said to be Henselian if its valuation extends uniquely to every finite separable extension of \(K\). Note that any algebraic extension of a perfect Henselian field is Henselian too. 

For any Galois extension $L/K$ of Henselian fields, we denote by $I(L/K)$ its inertia group. By definition of the Krull topology, a subgroup of $\Gal(L/K)$ is open in $\Gal(L/K)$ if its fixed field in $L$ is a finite extension of $K$. 
In particular, $I(L/K)$ is open in $\Gal(L/K)$ if and only if the inertia degree of the extension $L/K$ is finite. 

 The aim of this section is to show the following:

\begin{theorem} \label{prop G/I}
Let $K$ be a Henselian field of characteristic $0$ containing $\mug_{2p}$, but not $\mug_{p^\infty}$, and whose residue field lies in $\overline{\FF_p}$. 
Let $\Gamma\subseteq K^\times$ be a finitely generated subgroup.
Then the inertia subgroup $I(K(\Gamma^{p-\mathrm{div}})/K)$ is open in $\Gal(K(\Gamma^{p-\mathrm{div}})/K)$ if and only if $I(K(\mug_{p^\infty})/K)$ is open in $\Gal(K(\mug_{p^\infty})/K)$. 
\end{theorem}

Before proving \Cref{prop G/I}, we state the consequence we are interested in. 

\begin{cor} \label{cor LPTR}
Let $p$ be a rational prime, $F/\QQ_p$ an algebraic extension, and $\Gamma\subseteq F^\times$ a finitely generated subgroup. If $F$ is a discrete valuation field, then there is an integer $n_0$ such that the extension $F(\Gamma^{p-\mathrm{div}})/F(\Gamma^{1/p^{n_0}})$ is totally ramified. 
\end{cor}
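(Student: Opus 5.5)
The plan is to deduce Corollary~\ref{cor LPTR} from Theorem~\ref{prop G/I}, applied after a finite base change that arranges its hypotheses. Set $F'=F(\mug_{2p})$. This is a finite extension of $F$, so it is again a discrete valuation field. Its residue field is algebraic over $\FF_p$. It is Henselian, being an algebraic extension of the perfect Henselian field $\QQ_p$.

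Next we check that $F'$ does not contain $\mug_{p^\infty}$. If it did, then $\QQ_p(\mug_{p^\infty})\subseteq F'$. The ramification index of $\QQ_p(\mug_{p^n})/\QQ_p$ is $(p-1)p^{n-1}$, which is unbounded. This would contradict discreteness of the valuation on $F'$, whose value group has a minimal positive element and hence bounded ramification over $\QQ_p$.

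The same ramification computation shows that $I(F'(\mug_{p^\infty})/F')$ has finite index. Indeed, $F'(\mug_{p^\infty})/F'$ has Galois group embedded in $\ZZ_p^\times$. Its residue field extension is finite, because the residue fields of $\QQ_p(\mug_{p^n})$ all equal $\FF_p$. More concretely, write $e$ for the ramification index of $F'/\QQ_p$. Then the inertia degree of $F'(\mug_{p^n})/F'$ is bounded by $[F'(\mug_{p^n}):F']/(\text{ram.\ index})$. The ramification index is at least $(p-1)p^{n-1}/e$, and the degree is at most $(p-1)p^{n-1}$, so the inertia degree is at most $e$. This uniform bound makes the inertia degree of the infinite extension finite, and hence the inertia group open.

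Theorem~\ref{prop G/I} then says that $I=I(F'(\Gamma^{p-\mathrm{div}})/F')$ is open in $G=\Gal(F'(\Gamma^{p-\mathrm{div}})/F')$. The fixed field $M$ of $I$ is the maximal unramified subextension, and it is finite over $F'$.

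The remaining step is to realise $M$ inside some finite layer. Since $F'(\Gamma^{p-\mathrm{div}})=\bigcup_n F'(\Gamma^{1/p^n})$ and $M$ is finite, we can choose $n_0$ with $M\subseteq F'(\Gamma^{1/p^{n_0}})$ and $p^{n_0}\ge 2p$. Then $\mug_{2p}\subseteq \Gamma^{1/p^{n_0}}$, so $F(\Gamma^{1/p^{n_0}})=F'(\Gamma^{1/p^{n_0}})$, and also $F(\Gamma^{p-\mathrm{div}})=F'(\Gamma^{p-\mathrm{div}})$. Any unramified subextension of $F'(\Gamma^{p-\mathrm{div}})/F'(\Gamma^{1/p^{n_0}})$ composed with $M$ stays unramified over $F'$, so it lies in $M\subseteq F'(\Gamma^{1/p^{n_0}})$. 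Therefore $F(\Gamma^{p-\mathrm{div}})/F(\Gamma^{1/p^{n_0}})$ is totally ramified.

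The main care point is the verification that $F'(\mug_{p^\infty})/F'$ has finite inertia degree. One must make sure the bound is genuinely uniform in $n$, which the ramification-index estimate above provides. Everything else is formal once Theorem~\ref{prop G/I} is granted.
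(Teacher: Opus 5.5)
Your strategy is the paper's: pass to $F'=F(\mug_{2p})$, bound the inertia degree of $F'(\mug_{p^n})/F'$ uniformly by $e$, apply Theorem~\ref{prop G/I}, and then absorb the finite unramified part into a finite layer. The paper does that last step by multiplicativity of inertia degrees along the layers $F(\Gamma^{1/p^n})/F(\Gamma^{1/p^{n-1}})$; you use the fixed field $M$ of inertia, which is an equivalent packaging.

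The sentence you use to finish, however, is false as written. Put $N=F'(\Gamma^{1/p^{n_0}})$ and $L_\infty=F'(\Gamma^{p-\mathrm{div}})$. Composing with $M$ changes nothing, since $M\subseteq N$. An extension $U$ of $N$ inside $L_\infty$ that is unramified over $N$ is in general not unramified over $F'$. The reason is that $N\supseteq F'(\mug_{p^{n_0}})$, and $N$ is itself ramified over $F'$ as soon as $(p-1)p^{n_0-1}>e$. Taking $U=N$, your argument would give $N\subseteq M$, which is absurd.

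The correct conclusion is immediate from $M\subseteq N$. You have $\Gal(L_\infty/N)\subseteq\Gal(L_\infty/M)=I(L_\infty/F')$, hence $I(L_\infty/N)=I(L_\infty/F')\cap\Gal(L_\infty/N)=\Gal(L_\infty/N)$. Equivalently, writing $\kappa_X$ for the residue field of $X$, you have $\kappa_M\subseteq\kappa_N\subseteq\kappa_{L_\infty}=\kappa_M$.

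There is also a smaller slip. For odd $p$ one has $\zeta_{2p}^{p^{n_0}}=-1$, so $\mug_{2p}\subseteq\Gamma^{1/p^{n_0}}$ holds only if $-1\in\Gamma$. What you actually need is $F(\Gamma^{1/p^{n_0}})=F'(\Gamma^{1/p^{n_0}})$. This still holds, because $-1\in F$ and $\mug_{p^{n_0}}\subseteq\Gamma^{1/p^{n_0}}$.
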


\begin{proof}
Write $e$ for the ramification index of $F$ over $\Q_p$, and let $n\geq 1$ be an integer. 
Since the extension $\Q_p(\mug_{p^n})/\Q_p$ is totally ramified of degree $(p-1)p^{n-1}$, we infer that $F(\mug_{p^n})/F$ is an extension of degree at most $(p-1)p^{n-1}$ whose ramification index is at least $(p-1)p^{n-1}/e$. 
This forces its inertia degree to be at most $e$, and this holds for each $n$. It follows that $F(\mug_{p^\infty})/F(\mug_{2p})$ has finite inertia degree. Notice that $F(\mug_{2p})$ is a discrete valuation field, so that $\mug_{p^\infty}\not\subseteq F(\mug_{2p})$. Moreover its residue field lies in $\overline{\FF_p}$, so that Theorem \ref{prop G/I}, applied to $K=F(\mug_{2p})$,  
ensures us that the inertia degree of the extension $F(\Gamma^{p-\mathrm{div}})/F$ is finite.
Therefore, $F(\Gamma^{1/p^n})/F(\Gamma^{1/p^{n-1}})$ has inertia degree $1$ for all but finitely many $n$. Setting $n_0$ to be the maximum of the integers $n$ for which this extension does not have inertia degree $1$, we get that $F(\Gamma^{p-\mathrm{div}})/F(\Gamma^{1/p^{n_0}})$ is totally ramified.
\end{proof}

The forward direction of Theorem \ref{prop G/I} is obvious, so we only focus on the converse in the sequel of this section. 
Fix for this section a Henselian field $K$ of characteristic $0$ containing $\mug_{2p}$, but not $\mug_{p^\infty}$, and whose residue field is an algebraic extension of $\FF_p$. 
We also fix a finitely generated subgroup $\Gamma\subseteq K^\times$ and we suppose that $I(K(\mug_{p^\infty})/K)$ is open in $\Gal(K(\mug_{p^\infty})/K)$. 
Of course, $I(L(\mug_{p^\infty})/L)$ is open in $\Gal(L(\mug_{p^\infty})/L)$ for all finite extensions $L/K$. 
Note that the theorem would follow if we are able to prove that $I(L(\Gamma^{p-\mathrm{div}})/L)$ is open in $\Gal(L(\Gamma^{p-\mathrm{div}})/L)$ for at least one finite extension $L/K$. 
The construction of a suitable field $L$ is the purpose of the next lemma.

\begin{lemma} \label{lem:form.Ss-sec4.1}
There exist a finite extension $L/K$ and $s\geq 1$ elements $\zeta_{p^l},\gamma_2,\ldots,\gamma_s\in L\cap\Gamma^{p-\mathrm{div}}$ such that, letting $L_m=L(\zeta_{p^l}^{1/p^{m}}\gamma_2^{1/p^{m}},\dots,\gamma_s^{1/p^{m}})$ for each $m\ge0$,  we get: 
\begin{enumerate} [label=\alph*)]
\item $L(\mug_{p^\infty})/L$ is totally ramified;
\item $L(\Gamma^{p-\mathrm{div}})=L(\mug_{p^\infty}, \gamma_2^{1/p^\infty},\dots,\gamma_s^{1/p^\infty})$;
\item  $\Gal(L_{m+1}/ L_m)\cong(\Z/p\Z)^s$ for each $m\ge0$. 
\end{enumerate}
\end{lemma}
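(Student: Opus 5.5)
We start from a finite extension of $K$ that kills the inertia in the cyclotomic tower, and then let the radical tower over it reach its stable Kummer shape via Proposition~\ref{prop:degradext}. Since $I(K(\mug_{p^\infty})/K)$ is open in $\Gal(K(\mug_{p^\infty})/K)$, its fixed field $K'$ is a finite extension of $K$. Over $K'$ the extension $K'(\mug_{p^\infty})/K'$ is totally ramified, because its inertia group is the whole Galois group. Any further finite extension of $K'$ contained in $K'(\mug_{p^\infty})$ keeps this property. The same holds after any finite extension $L/K'$: the inertia of $L(\mug_{p^\infty})/L$ injects into that of the restriction, but we need total ramification over $L$ itself. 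To guarantee it, we will only enlarge $K'$ by elements of $\Gamma^{p\text{-div}}$ and by roots of unity, and we keep track of the residue field at the end, as explained below.

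We apply Proposition~\ref{prop:degradext} to $K'$ and $\Gamma$. Since $K'$ contains $\mug_{2p}$ but not $\mug_{p^\infty}$ (it is a finite extension of $K$), it provides $n\ge2$, $s\ge1$, elements $\gamma_2,\dots,\gamma_s\in\Gamma^{1/p^n}$ and $\zeta_{p^{k_n}}$ with
\[
K'_m=K'_n\bigl(\zeta_{p^{k_n}}^{1/p^{m-n}},\gamma_2^{1/p^{m-n}},\dots,\gamma_s^{1/p^{m-n}}\bigr),
\]
with $[K'_{m+1}:K'_m]=p^s$ for $m\ge n$. We set $l=k_n$ and $L_0=K'_n$. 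The elements $\zeta_{p^l},\gamma_2,\dots,\gamma_s$ lie in $L_0\cap\Gamma^{p\text{-div}}$, since $\mug_{p^\infty}\subseteq\Gamma^{p\text{-div}}$. Property c) then follows from the degree count $p^s$ together with the fact that each step is Kummer of exponent $p$ (Remark~\ref{rem:kummer}), which gives $\Gal(L_{m+1}/L_m)\cong(\ZZ/p\ZZ)^s$. Property b) follows by taking the union over $m$: the roots $\zeta_{p^l}^{1/p^m}$ generate $\mug_{p^\infty}$ over $L_0$, and $L_0(\Gamma^{p\text{-div}})=\bigcup_m K'_m$.

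The main obstacle is a): $L_0(\mug_{p^\infty})/L_0$ must still be totally ramified. Passing from $K'$ to $K'_n=K'(\Gamma^{1/p^n})$ may create unramified subextensions that meet the cyclotomic tower. In that case the residue field of $L_0$ grows, but this only shrinks the cyclotomic inertia defect. The real difficulty is different: the residue extension of $L_0(\mug_{p^\infty})/L_0$ could in principle be nontrivial even though that of $K'(\mug_{p^\infty})/K'$ is trivial.

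To handle this, we use that the inertia of $L_0(\mug_{p^\infty})/L_0$ is still open, since $L_0/K$ is finite. So we replace $L_0$ by the finite extension $L=L_0\cdot L_0'$, where $L_0'$ is the maximal unramified subextension of $L_0(\mug_{p^\infty})/L_0$. This extension is finite and lies inside $L_0(\mug_{p^{l'}})$ for some $l'$. Then $L(\mug_{p^\infty})/L$ is totally ramified. Writing $L=L_0(\zeta_{p^{l'}})$, we rerun Proposition~\ref{prop:degradext} over $L$. Its last statement again yields a presentation of the required form with a new $l$. Adjoining $p$-power roots of unity to the base does not destroy a), because $L(\mug_{p^\infty})=L_0(\mug_{p^\infty})$. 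Since $L$ contains $\mug_{2p}$ but not $\mug_{p^\infty}$, s stays $\ge1$.

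Finally, we choose the index $n$ of the new application large enough that the degree stabilization of Proposition~\ref{prop:degradext} holds from level $0$ after renaming $L:=L_n$. Property a) is preserved by this last finite cyclotomic-radical enlargement, for the same reason as before.
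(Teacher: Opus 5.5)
\textbf{There is a genuine gap, in your final step.} Your first three steps are sound. After applying Proposition~\ref{prop:degradext} over $K'$ and adjoining $\zeta_{p^{l'}}$ to reach the inertia field of $K'_n(\mug_{p^\infty})/K'_n$, the field $L=K'_n(\zeta_{p^{l'}})$ does satisfy a).

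The trouble is that rerunning Proposition~\ref{prop:degradext} over $L$ gives a presentation relative to $L(\Gamma^{1/p^{n'}})$, not relative to $L$. So you are forced to rename the base to $L(\Gamma^{1/p^{n'}})$. This is a \emph{radical} enlargement, not contained in $L(\mug_{p^\infty})$, so your justification ``$L(\mug_{p^\infty})=L_0(\mug_{p^\infty})$'' does not apply. You yourself noted a paragraph earlier that such enlargements can create unramified subextensions meeting the cyclotomic tower.

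In fact the argument is circular. By Remark~\ref{rem:km}, $K'_m\cap\mug_{p^\infty}=\mug_{p^{k_n+m-n}}$ for $m\ge n$. Hence, in the relevant case $l'>k_n$, the step $[L(\Gamma^{1/p^{m+1}}):L(\Gamma^{1/p^m})]$ equals $p^{s-1}$ for $n\le m<n+l'-k_n$ and $p^{s}$ afterwards. The stabilization index of the rerun therefore satisfies $n'\ge n+l'-k_n$. But then $L(\Gamma^{1/p^{n'}})=K'_{n'}(\zeta_{p^{l'}})=K'_{n'}$ is just a higher level of your original radical tower over $K'$. The cyclotomic correction of your third step has been absorbed, and you are back at exactly the kind of field for which a) was in doubt. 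Nothing in your proposal establishes a) for it.

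\textbf{The fix: no rerun is needed.} Keep $\gamma_2,\dots,\gamma_s$ from the first application and replace only $\zeta_{p^{k_n}}$ by $\zeta_{p^{l'}}$. Then
$L_j=L(\zeta_{p^{l'}}^{1/p^j},\gamma_2^{1/p^j},\dots,\gamma_s^{1/p^j})=K'_{n+j}(\zeta_{p^{l'+j}})$.
Since $K'_{n+j}\cap\mug_{p^\infty}=\mug_{p^{k_n+j}}$ by Remark~\ref{rem:km}, you get $[L_j:K'_{n+j}]=p^{l'-k_n}$ independently of $j$. Hence $[L_{j+1}:L_j]=[K'_{n+j+1}:K'_{n+j}]=p^s$, and c) follows because $L_{j+1}/L_j$ is Kummer of exponent $p$. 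Property a) stays intact because the base $L$ is never changed. This is exactly the paper's argument, which works over $K$ directly and does not need your preliminary passage to $K'$.
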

\begin{proof}
Set $K_m=K(\Gamma^{1/p^m})$ for every integer $m\geq 0$. 
Proposition \ref{prop:degradext}  applied to the extension $K(\Gamma^{p-\mathrm{div}})/K$ provide integers $n\geq 2$ and $s\geq 1$, as well as a subgroup $\tilde\Gamma=\langle\zeta_{p^{k_n}},\gamma_2,\dots,\gamma_s\rangle$ of $K_n^\times$, where $\mug_{p^{k_n}}=K_n\cap\mug_{p^\infty}$ and $\gamma_2,\dots,\gamma_s\in\Gamma^{1/p^n}$, such that $K_m=K_n(\tilde\Gamma^{1/p^{m-n}})$ and $K_{m+1}/K_m$ has degree $p^{s}$ for all integers $m\geq n$.

The fact that the inertia group $I(K_n(\mug_{p^\infty})/K_n)$ is open in $\Gal(K_n(\mug_{p^\infty})/K_n)$ by assumption means that there is a smallest integer greater than or equal to $k_n$, say $l$, such that $K_n(\mug_{p^\infty})/K_n(\zeta_{p^l})$ is totally ramified. Let now $L=K_n(\zeta_{p^l})$. 

Clearly $L(\mug_{p^\infty})=K_n(\mug_{p^\infty})$; this proves $(a)$.

On the other hand, our construction ensures that $L(\Gamma^{p-\text{div}})=K(\Gamma^{p-\text{div}})=K_n(\tilde{\Gamma}^{p-\text{div}})=L(\mug_{p^\infty}, \gamma_2^{1/p^\infty},\dots,\gamma_s^{1/p^\infty})$;  this proves $(b)$.

We now show $c)$. 
  Let $j\geq 0$ be an integer. 
  Thanks to Remark \ref{rem:km}, we know that $K_{j+n}\cap\mug_{p^\infty}=\mug_{p^{j+k_n}}$. 
  Since $K_{j+n}(\zeta_{p^{j+l}})=L_j$, we infer that $L_j/K_{j+n}$ is an extension of degree $p^{l-k_n}$. 
 Thus, given an integer $m\geq 0$, one has \[[L_{m+1}:L_m]=\frac{[L_{m+1}:K_{m+1+n}]}{[L_m:K_{m+n}]}[K_{m+n+1}:K_{m+n}]=p^s\] and $c)$ follows by noticing that $L_{m+1}/L_m$ is a Kummer extension of exponent $p$.
\end{proof}

Note that $\zeta_{p^{m+1+l}}\notin L_m$ for all integers $m\geq 0$ by Lemma \ref{lem:form.Ss-sec4.1} $c)$.

 The next lemma shows that $I(L(\Gamma^{p-\mathrm{div}})/L)$ is open in $\Gal(L(\Gamma^{p-\mathrm{div}})/L)$ if one extension in the tower $L=L_0\subseteq L_1 \subseteq L_2 \subseteq \cdots \subseteq  L(\Gamma^{p-\mathrm{div}})$ is totally ramified. 
  
\begin{lemma} \label{lmm:ramimpliestotram}
 Assume that there is an integer $n\geq 0$ for which $L_{n+1}/L_n$ is totally ramified. Then $L(\Gamma^{p-\mathrm{div}})/L_n$ is totally ramified.
 \end{lemma}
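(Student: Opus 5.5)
The plan is to show by induction on $m\geq n$ that $L_{m+1}/L_m$ is totally ramified. It then follows that every finite layer $L_m/L_n$ is totally ramified. Since $L(\Gamma^{p-\mathrm{div}})=\bigcup_m L_m$ by Lemma \ref{lem:form.Ss-sec4.1} b), the whole extension $L(\Gamma^{p-\mathrm{div}})/L_n$ is then totally ramified. The inductive step is to pass from $L_{m+1}/L_m$ totally ramified to $L_{m+2}/L_{m+1}$ totally ramified. I will do this inside the finite Galois group
$$G:=\Gal(L_{m+2}/L_m),\qquad N:=\Gal(L_{m+2}/L_{m+1})\subseteq G.$$
Here $G/N\cong\Gal(L_{m+1}/L_m)\cong(\Z/p\Z)^s$ and $N\cong(\Z/p\Z)^s$ by Lemma \ref{lem:form.Ss-sec4.1} c).

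The key algebraic step is that the $p$-th power map $g\mapsto g^p$ sends $G$ into $N$ and induces a bijection $G/N\to N$. To check this I will use explicit coordinates, as in the proof of Proposition \ref{prop:lie}. An element $g\in G$ is determined by two pieces of data.
\begin{itemize}
\item Its action on the compatible system $\zeta_{p^{l+m+2}}$. This is a cyclotomic value $\chi(g)\in 1+p^{l+m}\Z_p$ modulo $p^{l+m+2}$; note that $L_m\cap\mug_{p^\infty}=\mug_{p^{l+m}}$ by Lemma \ref{lem:form.Ss-sec4.1} c) and the remark after it.
\item Its action on $\gamma_i^{1/p^{m+2}}$. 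This is multiplication by $\zeta_{p^{m+2}}^{\alpha_i(g)}$, with $\alpha_i(g)\in p^m\Z/p^{m+2}\Z$.
\end{itemize}
Then $\chi(g^p)=\chi(g)^p$ and $\alpha_i(g^p)=\alpha_i(g)(1+\chi(g)+\dots+\chi(g)^{p-1})$. Since $\chi(g)\equiv 1\pmod{p^{l+m}}$ with $l\geq 1$ (and $\mug_4\subseteq L$ when $p=2$), the same computation as in Lemma \ref{lem:orbitagen2} gives $1+\chi(g)+\dots+\chi(g)^{p-1}\equiv p$ modulo $p^2$. So in these coordinates the $p$-power map is, up to units, multiplication by $p$ from level $m$ to level $m+1$. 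This shows $g^p\in N$, that $g^p$ depends only on $gN$, and that the resulting map $G/N\to N$ is injective. Both groups have order $p^s$, so the map is bijective.

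The ramification step is then short. Let $I\subseteq G$ be the inertia subgroup of $L_{m+2}/L_m$. By the induction hypothesis $L_{m+1}/L_m$ is totally ramified, so $I$ surjects onto $G/N$. Hence for every $h\in N$ there is some $g\in I$ with $g^p=h$, and $h\in I$ because $I$ is a subgroup. So $N\subseteq I$, and together with $IN=G$ this gives $I=G$. Therefore $L_{m+2}/L_m$ is totally ramified, and so is $L_{m+2}/L_{m+1}$. Note that no hypothesis on the cyclicity of $G/I$ is needed here.

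I expect the main obstacle to be the coordinate computation. The point is to make sure that the cyclotomic twist $\chi(g)$ does not spoil the claim that the $p$-power map is multiplication by $p$. This requires tracking the exact level of roots of unity in $L_m$ (Lemma \ref{lem:form.Ss-sec4.1} c)) and treating the case $p=2$ with $\mug_4\subseteq L$, in the same way as Lemma \ref{lem:orbitagen2}. The first coordinate, built from $\zeta_{p^l}^{1/p^m}$, also has to be matched with the cyclotomic character, so that $\Gal(L_{m+1}/L_m)$ is genuinely parametrized by $(\chi \bmod p^{l+m+1},\alpha_2,\dots,\alpha_s)$.
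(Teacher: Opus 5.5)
Your argument is correct, but it takes a different route from the paper.

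The paper argues by contradiction on the whole tower at once. If some $L_m/L_n$ failed to be totally ramified, its maximal unramified subextension would contain an unramified $E/L_n$ of degree $p$. This $E$ is linearly disjoint from the totally ramified $L_{n+1}/L_n$. The compositum then gives an elementary abelian quotient $\Gal(L_{n+1}E/L_n)\cong(\Z/p\Z)^{s+1}$ of $\Gal(L(\Gamma^{p-\mathrm{div}})/L_n)\cong(1+p^{n+l}\ZZ_p)\ltimes\ZZ_p^{s-1}$. That group is topologically generated by $s$ elements, so this is impossible.

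You instead propagate total ramification one layer at a time. You use the affine coordinates $(\chi,\alpha_2,\dots,\alpha_s)$ on the finite group $\Gal(L_{m+2}/L_m)$ to show that $g\mapsto g^p$ induces a bijection $\Gal(L_{m+1}/L_m)\to\Gal(L_{m+2}/L_{m+1})$. An inertia subgroup that surjects onto the top layer therefore contains all these $p$-th powers, hence the bottom layer. The coordinate computation checks out:
\begin{itemize}
\item $\alpha_i(g)\in p^m\Z/p^{m+2}\Z$.
\item $1+\chi(g)+\dots+\chi(g)^{p-1}\equiv p \pmod{p^2}$ once $\chi(g)\equiv 1 \pmod{p^{l+m}}$ with $l+m\ge 1$, or $l+m\ge 2$ when $p=2$, which $\mug_4\subseteq L$ guarantees.
\end{itemize}
So $g^p$ depends only on $gN$, and the induced map is injective. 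Combined with the compatibility of inertia groups with quotients, the inductive step follows.

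Both proofs rest on the same structural fact: the first layer of the filtration is the Frattini subgroup, so the group behaves like a uniform pro-$p$ group. The paper's version is shorter. However, it relies on the identification of the full infinite Galois group and a generator count, both stated without detailed verification. Yours is more elementary and self-contained: it needs only finite-level Kummer coordinates and Lemma \ref{lem:form.Ss-sec4.1} b), c). It also yields the slightly more explicit statement that inertia at each layer is generated from the previous one by $p$-th powers.
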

 
 \begin{proof}
 It suffices to show that $L_m/L_n$ is totally ramified for all $m>n$. Assume by contradiction that it is not the case and write $M$ for the maximal unramified extension of $L_n$ contained in $L_m$. 
 The degree of the extension $L_m/L_n$ being a power of $p$, the same holds for the Galois extension $M/L_n$. Its Galois group then has a subgroup of order $d$ for every divisor $d$ of $[M :L_n]$. The Galois correspondence now asserts that $M/L_n$ contains an intermediate subfield $E$ of degree $p$ over $L_n$. The extension $L_{n+1}/L_n$ is totally ramified and $E/L_n$ is unramified, therefore these two extensions are linearly disjoint, and $\Gal(L_{n+1}E/L_n)\cong\Gal(L_{n+1}/L_n)\times\Gal(E/L_n)\cong(\ZZ/p\ZZ)^{s+1}$. This gives a contradiction since this group is a quotient of $\Gal(L(\Gamma^{p-\text{div}})/L_n)\cong (1+p^{n+l}\ZZ_p) \ltimes \ZZ_p^{s-1}$, which can be generated by only $s$ elements.  
\end{proof}
 
 In other words, Theorem \ref{prop G/I} would follow if the extension $L_{n+1}/L_n$ was totally ramified for some $n$. From now on, we assume that the extension $L_{n+1}/L_n$ is not totally ramified for any $n$, and we derive a contradiction.  
 
 The next lemma states a general fact about unramified extensions. 
 
 \begin{lemma} \label{lmm:unrextcyclic}
 Let $M$ be a Henselian field of characteristic $0$ whose residue field lies in $\overline{\FF_p}$. 
 Then every finite unramified extension of $M$ is cyclic. 
 \end{lemma}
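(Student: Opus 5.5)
The plan is to reduce everything to the residue field and use its Galois theory. Let $N/M$ be a finite unramified extension. Since $M$ is Henselian, $N/M$ is again a Henselian extension. Let $k_M$ and $k_N$ be the residue fields. We will invoke the standard correspondence between finite unramified extensions of a Henselian field and finite separable extensions of its residue field. Under this correspondence $N/M$ is Galois, since the unramified extensions of $M$ inside $\overline{M}$ form a Galois category equivalent to that of the residue field. Moreover the reduction map induces an isomorphism
\[
\Gal(N/M)\xrightarrow{\ \sim\ }\Gal(k_N/k_M).
\]
To justify this, we would use the uniqueness of the extension of the valuation to note that $\Gal(N/M)$ equals its own decomposition group. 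The classical surjection from the decomposition group onto $\Gal(k_N/k_M)$ then has kernel the inertia group. This kernel is trivial because $N/M$ is unramified, meaning $[N:M]=[k_N:k_M]$ and $k_N/k_M$ is separable; the latter is automatic here since $k_M$ is algebraic over $\FF_p$, hence perfect.

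Next we determine $\Gal(k_N/k_M)$. The field $k_M$ is an algebraic extension of $\FF_p$, and $k_N$ is a finite extension of it. Every finite extension of a subfield of $\overline{\FF_p}$ is cyclic. To see this, write $k_M=\bigcup_i \FF_{p^{a_i}}$ as a directed union of finite fields. Then $k_N=k_M(\theta)$ for some $\theta$. For $i$ large enough, the minimal polynomial of $\theta$ over $k_M$ has coefficients in $\FF_{p^{a_i}}$ and stays irreducible over every larger $\FF_{p^{a_j}}$, with the same degree $d=[k_N:k_M]$. Thus $\Gal(k_N/k_M)$ restricts isomorphically onto $\Gal(\FF_{p^{a_j}}(\theta)/\FF_{p^{a_j}})$, which is cyclic, generated by Frobenius. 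An equivalent route: $\Gal(\overline{\FF_p}/k_M)$ is a closed subgroup of $\widehat{\ZZ}$, hence procyclic, and every finite quotient of a procyclic group is cyclic. I would present this second argument, since it is cleaner.

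Combining the two steps, $\Gal(N/M)\cong\Gal(k_N/k_M)$ is cyclic.

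The only delicate point is the first step in the non-complete Henselian setting: we must ensure that finite unramified extensions are Galois and that $\Gal(N/M)$ maps isomorphically to the residue Galois group. For this I would cite the standard theory of Henselian fields (e.g.\ Neukirch, Chapter II, or Engler--Prestel), where the maximal unramified extension $M^{nr}$ satisfies $\Gal(M^{nr}/M)\cong\Gal(\overline{k_M}/k_M)$. Once this is in place, $N\subseteq M^{nr}$, and $\Gal(N/M)$ is a quotient of the procyclic group $\Gal(\overline{\FF_p}/k_M)$, hence cyclic.
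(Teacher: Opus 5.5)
Your proposal is correct and follows essentially the paper's route: identify $\Gal(N/M)$ with $\Gal(k_N/k_M)$ via the standard theory of unramified extensions of Henselian fields (the paper cites Neukirch, Chapter II, Proposition 9.9), then use that Galois groups of algebraic extensions of $\FF_p$ are procyclic. The differences are cosmetic. The paper views $\Gal(k_N/k_M)$ as a finite subgroup of the procyclic group $\Gal(k_N/\FF_p)$, whereas you view it as a quotient of $\Gal(\overline{\FF_p}/k_M)$. Also, your claim that $N/M$ is Galois genuinely needs $k_N/k_M$ to be normal, which holds here because $k_M\subseteq\overline{\FF_p}$.
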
 
 
 \begin{proof}
 Let $E/M$ be a finite unramified extension. 
 By the general theory of unramified extensions, its Galois group is isomorphic to $\Gal(\kappa_E/\kappa_M)$, where $\kappa_E$ denotes the residue field of $E$ and $\kappa_M$ that of $M$ \cite[Chapter II, Proposition 9.9]{Neukirch1999}. 
 The extension $\kappa_E/\kappa_M$ is therefore finite. 
 Since $\Gal(\kappa_E/\FF_p)$ is pro-cyclic, and $\Gal(\kappa_E/\kappa_M)$ is a finite (hence closed) subgroup of a pro-cyclic group, the latter is therefore cyclic.
  \end{proof}

Define $M_m$ as the maximal unramified extension of $L$ contained in $L_m$.  
The inclusion $L_m \subseteq L_{m+1}$ leads to $M_m\subseteq M_{m+1}$ for all $m\geq 0$. 
 We now compute its Galois group over $L$ thanks to Lemma \ref{lmm:unrextcyclic}. 
  Recall that we assumed by contradiction that the extension $L_{n+1}/L_n$ is not totally ramified for any $n$.
  
 \begin{lemma} \label{lmm iso}
 We have $\Gal(M_m/L) \cong \Z/p^m\Z$ for all $m\geq 0$. 
 \end{lemma}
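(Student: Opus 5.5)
We argue by induction on $m$, the case $m=0$ being trivial since $M_0=L$. By Lemma~\ref{lmm:unrextcyclic}, each $M_m/L$ is a finite unramified, hence cyclic, extension. Moreover $M_m\subseteq L_m$ and $\Gal(L_m/L)$ is a $p$-group of order $p^{sm}$ by Lemma~\ref{lem:form.Ss-sec4.1}~c), so $\Gal(M_m/L)$ is cyclic of $p$-power order. It therefore suffices to show that $[M_{m+1}:M_m]=p$ for every $m\geq 0$. Granting that, $[M_m:L]=p^m$ and cyclicity give $\Gal(M_m/L)\cong\Z/p^m\Z$.

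\emph{Upper bound.} The compositum $M_mL_{m+1}$ equals $L_{m+1}$, and $M_{m+1}$ is contained in $L_{m+1}$. Since $M_{m+1}/L$ is unramified, $M_{m+1}L_m/L_m$ is unramified. Also $M_{m+1}L_m\subseteq L_{m+1}$, so $[M_{m+1}L_m:L_m]$ divides $p^s$. The cleaner route is to use the exponent. $\Gal(M_{m+1}/L)$ is cyclic, and the restriction $\Gal(M_{m+1}/L)\to\Gal(M_m/L)$ has kernel $\Gal(M_{m+1}/M_m)$, which injects into $\Gal(L_{m+1}/L_m)$ via $\sigma\mapsto$ a lift. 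More precisely, $M_{m+1}/M_m$ is linearly disjoint-compatible, since $M_{m+1}L_m/L_m$ has Galois group isomorphic to $\Gal(M_{m+1}/M_{m+1}\cap L_m)$, and $M_{m+1}\cap L_m=M_m$ by maximality of $M_m$. Hence $\Gal(M_{m+1}/M_m)$ embeds in $\Gal(L_{m+1}/L_m)\cong(\Z/p\Z)^s$, so it is elementary abelian. Being a subgroup of a cyclic group, it is cyclic, and therefore has order at most $p$.

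\emph{Lower bound.} By the standing assumption, $L_{m+1}/L_m$ is not totally ramified. So its maximal unramified subextension $U$ is nontrivial. The compositum $M_{m+1}L_m$ is unramified over $L_m$ and lies in $L_{m+1}$, so it is contained in $U$. The main point is the converse: $U$ should come from an unramified extension of $L$. Since $L_m/M_m$ is totally ramified by definition of $M_m$, the residue field of $L_m$ equals that of $M_m$. Then $U/L_m$ is unramified with residue field $\kappa_U\supsetneq\kappa_{L_m}=\kappa_{M_m}$. Let $M'$ be the unramified extension of $M_m$ (equivalently of $L$, as $M_m/L$ is unramified) with residue field $\kappa_U$. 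By Henselianity and the equivalence between unramified extensions and residue extensions, $M'\subseteq U\subseteq L_{m+1}$. Indeed, $U$ contains the unique unramified extension of $L_m$ with residue field $\kappa_U$, namely $M'L_m$, and contains $M'$ itself by Hensel lifting of a primitive residue generator. Thus $M'\subseteq M_{m+1}$ and $M'\neq M_m$, giving $[M_{m+1}:M_m]\geq p$.

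Combining the two bounds gives $[M_{m+1}:M_m]=p$, which completes the induction. The step I expect to be most delicate is the lower bound. It needs an unramified subextension of $L_{m+1}/L_m$ to descend to an unramified extension of $L$ inside $L_{m+1}$. This uses that $L_m/M_m$ is totally ramified, so that residue fields coincide, together with the Henselian lifting of unramified extensions via residue fields from \cite[Chapter II]{Neukirch1999}.
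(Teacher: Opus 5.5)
Your proof is correct. It uses the same three ingredients as the paper: cyclicity of finite unramified extensions (Lemma~\ref{lmm:unrextcyclic}), the fact that $\Gal(L_{m+1}/L_m)$ has exponent $p$, and nontriviality of the unramified part of $L_{m+1}/L_m$ from the standing assumption. You organize them differently, though.

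The paper notes that $[M_m:L]$ is the inertia degree of $L_m/L$. By multiplicativity of inertia degrees, it then suffices to show that each $L_{n+1}/L_n$ has inertia degree $p$. The paper gets this by applying cyclicity and exponent $p$ directly to the maximal unramified subextension $E$ of $L_{n+1}/L_n$, which is nontrivial by assumption.

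You instead bound $[M_{m+1}:M_m]$ from above and below, which forces you to pass between the $L_m$-level and the $M_m$-level:
\begin{itemize}
\item the upper bound goes through the translated extension $M_{m+1}L_m/L_m$ and the identity $M_{m+1}\cap L_m=M_m$;
\item the lower bound descends the unramified part of $L_{m+1}/L_m$ to an unramified extension of $L$ inside $L_{m+1}$, using equality of the residue fields of $L_m$ and $M_m$ and Hensel lifting.
\end{itemize}
Write $f(\cdot)$ for the inertia degree. Your descent is a hand proof of $f(L_{m+1}/L)=f(L_{m+1}/L_m)\,f(L_m/L)$ together with $[M_m:L]=f(L_m/L)$, which the paper takes as standard facts about Henselian fields. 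So your route is more self-contained but longer.

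One small inaccuracy: the group $\Gal(M_{m+1}/M_m)\cong\Gal(M_{m+1}L_m/L_m)$ is a \emph{quotient} of $\Gal(L_{m+1}/L_m)$ via restriction, not a subgroup. Also, ``$\sigma\mapsto$ a lift'' is not a well-defined map. This is harmless, since a quotient of an elementary abelian $p$-group is still elementary abelian, and your conclusion that $\Gal(M_{m+1}/M_m)$ has order at most $p$ stands.
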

 
 \begin{proof}
 Because $\Gal(M_m/L)$ is cyclic by Lemma \ref{lmm:unrextcyclic}, it is enough to show that $M_m/L$ is a field extension of degree $p^m$. 
By the multiplicativity formula for inertia degrees, it suffices to show that the inertia degree of $L_{n+1}/L_n$ is $p$ for all $n\in\{0,\dots, m-1\}$. 
 
 Let $E$ be the maximal unramified extension of $L_n$ contained in $L_{n+1}$, which is different from $L_n$ by assumption. On the one hand, $\Gal(L_{n+1}/L_n)$ is an abelian group of exponent $p$. On the other hand, $\Gal(E/L_n)$ is a cyclic group by Lemma \ref{lmm:unrextcyclic}. It is therefore isomorphic to $\Z/p\Z$ as a cyclic group of exponent $p$. 
 \end{proof}

We are now ready to obtain the desired contradiction, and thus prove Theorem \ref{prop G/I}.
First, given any integer $m\geq 0$, the extension $M_m/L$ is unramified while $L(\mug_{p^\infty})/L$ is totally ramified. 
They are therefore linearly disjoint, and so $M_m\cap L(\mug_{p^\infty})=L$. 
In particular, $\zeta_{p^{l+1}}\notin M_m$ since $L\cap\mug_{p^\infty}=\mug_{p^l}$.

We now pick any integer $m\geq l+1$, and let us obtain the contradiction $\zeta_{p^m}\in M_m$.
Lemma \ref{lmm iso} asserts that $M_m/M_{m-1}$ is a field extension of degree $p$. 
Since $L$ contains the $p$-th roots of unity, one has $M_m=M_{m-1}(\varepsilon^{1/p})$ for some $\varepsilon\in M_{m-1}$. 
The extension $L_m/M_{m-1}(\zeta_{p^{m+l}})$ being Kummer, Remark  \ref{rem:kummer} yields \[ \varepsilon^{1/p} = b \prod_{i=2}^s \gamma_i^{u_i/p^m}\] for some $b\in M_{m-1}(\zeta_{p^{m+l}})$ and some integers $u_2,\dots, u_s$.
Note that $b^{p^m}\in M_{m-1}$. 

Let $j$ be the least non-negative integer for which $b^{p^j}\in M_{m-1}$. 
Of course, $j\leq m$.
The minimal polynomial of $b$ over $M_{m-1}$ is therefore $X^{p^j}-b^{p^j}$.

The fact that $M_{m-1}(\zeta_{p^{m+l}})/M_{m-1}$ is a cyclic extension with $\zeta_{p^{l+1}}\notin M_{m-1}$ implies, for degree reasons, that $M_{m-1}(b)=M_{m-1}(\zeta_{p^{l+j}})$. 
From \cite[Lemma 2.3]{Velez1980}, we get $b=c\zeta_{p^{l+j}}^a$ for some $c\in M_{m-1}$ and some integer $a$.
Thus, $M_m=M_{m-1}(\varepsilon^{1/p})=M_{m-1}(\delta_m)$, where \[\delta_m= \zeta_{p^{l+j}}^a \prod_{i=2}^s \gamma_i^{u_i/p^m}.\] 

Note that $\delta_m^{p^m}\in L$ since $j\leq m$.
Because $L=M_0\subseteq M_1\subseteq \dots \subseteq M_m$, it follows from Lemma \ref{lmm iso} that any intermediate subfield of the extension $M_m/L$ is of the form $M_D$ for some $D\in\{0,\dots,m\}$. The fact that $\delta_m \in M_m \setminus M_{m-1}$ leads to  $M_m=L(\delta_m)$. 
Since $M_m/L$ is a Galois extension of degree $p^m$, it follows that the Galois orbit of $\delta_m$ over $L$, namely $\mug_{p^m} \cdot \delta_m$, lies in $M_m$; whence $\zeta_{p^m}\in M_m$, a contradiction. \qed 

\section{Proof of the main tool}\label{sec:mainproof}

We are now ready to prove the following result. 
\begin{theorem} \label{thm:basecase}
Let $\FF$ be an algebraic extension of $\QQ$,  $\Gamma\subseteq \FF^\times$ a finitely generated subgroup, and $p$ a rational prime. 
Assume that:
\begin{itemize}
\item[a)] $\FF$ is Galois over some number field $\KK$; 
\item[b)] $\FF'\setminus\Gamma^{\mathrm{div}}$ has the property \B for every finite extension $\FF'/\FF$;
\item[c)] there is a field embedding $\iota: \overline{\Q}\hookrightarrow \overline{\Q_p}$ such that the topological closure of $\iota(\FF)$ in $\overline{\Q_p}$ is a discrete valuation field.  
\end{itemize}
Then $\FF(\Gamma^{p-\mathrm{div}})\setminus\Gamma^{\mathrm{div}}$ has the property \B.
\end{theorem}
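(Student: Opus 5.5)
We first make a series of harmless reductions, then apply Theorem \ref{thm:Piras} to a suitable tower, and finally descend using Lemma \ref{lem:orbitagen2} and Theorem \ref{lem:Remstronggen}.

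\textbf{Reductions.} We may enlarge $\FF$ by a finite extension. Hypotheses a)--c) are preserved, and the target set only grows. So we may assume that $\FF$ contains $\mug_{2p}$ and that $\FF/\KK$ is Galois with $\mug_{2p}\subseteq\KK$.

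If the closure $F$ of $\FF$ contains $\mug_{p^\infty}$, then $F$ is not a discrete valuation field, since $\QQ_p(\mug_{p^\infty})$ is infinitely ramified. So this case is excluded by c). Hence $F$ satisfies the hypotheses of Theorem \ref{thm:NC}, applied with $\FF$ in place of $\KK$.

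\textbf{The tower.} Theorem \ref{thm:NC} yields $n$, $l$, $\tilde\Gamma$ and $\EE=\FF_n(\zeta_{p^l})$, a finite extension of $\FF$, with $\FF(\Gamma^{p-\mathrm{div}})=\EE(\tilde\Gamma^{p-\mathrm{div}})$. By Corollary \ref{cor LPTR}, applied to the closure $K_n$ of $\EE$, there is $n_0$ such that the local extension beyond $K_n(\tilde\Gamma^{1/p^{n_0}})$ is totally ramified; here we use that $\tilde\Gamma$-roots generate the same local tower as $\Gamma$-roots, up to finitely many steps.

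Choose $m_0\ge\max(l,n_0)$ and set $\LL_k=\EE(\tilde\Gamma^{1/p^{m_0+k}})$, with base field $\EE$. We check the hypotheses of Theorem \ref{thm:Piras} as follows.
\begin{itemize}
\item (GAL): each $\LL_k$ is Galois over $\KK$, because $\FF/\KK$ is Galois and $\Gamma$-radicals are stable under conjugation (we may also take the Galois closure, which stays radical).
\item (LIE): this is Proposition \ref{prop:lie}, since a shifted Lie filtration is still a Lie filtration.
\item (DVF): this holds because $K_n$ is a finite extension of $F$.
\item (LTR): this follows from the choice $m_0\ge n_0$.
\item (NC): this is Theorem \ref{thm:NC} d), since $m_0\ge l$.
\end{itemize}
Theorem \ref{thm:Piras} therefore gives property \B for the set $\mathcal Z$.

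\textbf{Descent.} Let $\alpha\in\FF(\Gamma^{p-\mathrm{div}})\setminus\Gamma^{\mathrm{div}}$ have small height. Then $\alpha\in\LL_k$ for some $k$; take $k$ minimal. If $k\ge1$ and $\alpha\notin\mathcal Z$, then $\sigma(\alpha)/\alpha\in\mug_{p^\infty}$ for all $\sigma\in\Gal(\LL/\LL_{k-1})$.

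Apply Lemma \ref{lem:orbitagen2} with base $\LL_{k-1}$ (which contains $\mug_{2p}$) and the group generated by $\tilde\Gamma^{1/p^{m_0+k-1}}$. This gives $\alpha^p\in\tilde\Gamma^{1/p^{m_0+k-1}}\cdot\LL_{k-1}^{p}$. Hence $\alpha=\gamma\beta\zeta$ with $\gamma\in\Gamma^{\mathrm{div}}$, $\beta\in\LL_{k-1}$ and $\zeta\in\mug_p$.

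Iterating this with induction on $k$, we get $\alpha\in\mathcal Z\cdot\Gamma^{\mathrm{div}}$ or $\alpha\in\LL_0\cdot\Gamma^{\mathrm{div}}$. The second case is handled by b) applied to $\FF'=\LL_0$, which is a finite extension of $\FF$, combined with Theorem \ref{lem:Remstronggen}.

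\textbf{Main obstacle.} The iteration is the delicate step, because $\mathcal Z\cdot\Gamma^{\mathrm{div}}$ need not inherit property \B directly. Multiplying by $\gamma^{-1}$ changes the height, and the factor $\beta$ lies in a lower layer but is not itself controlled.

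My first attempt at a fix is to define, for each $\alpha$, the minimal $k$ such that $\alpha\in\LL_k\cdot\Gamma^{\mathrm{div}}$. Writing $\alpha=\beta\gamma$ with $\beta\in\LL_k$, the element $\beta$ must lie in $\mathcal Z$, since otherwise the descent lowers $k$. I would then try to show that $h(\beta\gamma)$ is bounded below in terms of $\beta\in\mathcal Z$, via the Pottmeyer-type argument that underlies Theorem \ref{lem:Remstronggen}. This amounts to using Theorem \ref{lem:Remstronggen} with $\LL=\FF(\Gamma^{p-\mathrm{div}})$ and proving the required property on $\mathcal Z\cup\LL_0$.
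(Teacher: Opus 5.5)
Your construction of the tower and your check of the hypotheses of Theorem~\ref{thm:Piras} essentially match the paper. You apply Corollary~\ref{cor LPTR} to $K_n$ and $\tilde\Gamma$ after Theorem~\ref{thm:NC}; the paper applies it to $F$ and $\Gamma$ first and then runs Theorem~\ref{thm:NC} over $\FF(\Gamma^{1/p^{n_0}})$. Either order works.

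One inaccuracy: (GAL) does not hold over $\KK$ as you claim. $\Gamma$ need not lie in $\KK$, so $\FF(\Gamma^{1/p^m})$ need not be Galois over $\KK$. Take as base the number field $\KK(\Gamma,\tilde\Gamma)\subseteq\EE$, over which $\EE$ and all $\LL_k$ are Galois, as the paper does.

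The real gap is the descent. You flag it but do not close it, and the fix you sketch cannot work:
\begin{itemize}
\item Theorem~\ref{lem:Remstronggen} is a statement about fields. Using it with $\LL=\FF(\Gamma^{p\text{-}\mathrm{div}})$ presupposes exactly the property \B you want to prove.
\item $\mathcal Z\cup\LL_0$ is not a field, nor even multiplicatively closed, so no Pottmeyer-type transfer applies to it.
\item Knowing $\beta\in\mathcal Z$ for some decomposition $\alpha=\beta\gamma$ gives no lower bound on $h(\alpha)$. You only get $h(\alpha)\ge h(\beta)-h(\gamma)$, and $h(\gamma)$ is uncontrolled, since $\beta$ is determined only up to $\LL_k\cap\Gamma^{\mathrm{div}}$, which contains elements of arbitrarily large height.
\end{itemize}

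The missing idea is to choose the decomposition so that the $\Gamma^{\mathrm{div}}$-factor has height $O(p^{-m})$.
\begin{itemize}
\item Let $m$ be minimal with $\alpha\in\LL_m\cdot\tilde\Gamma^{p\text{-}\mathrm{div}}$. Write $\alpha=\beta\gamma$ with $\beta\in\LL_m$ and $\gamma=\zeta_{p^N}^{k_0}\prod_j b_j^{k_j/p^N}$, where $b_1,\dots,b_r$ generate $\tilde\Gamma$ and $N>m$.
\item Divide $k_j=q_jp^{N-m}+r_j$ with $0\le r_j<p^{N-m}$. Then
\[
\delta:=\alpha\,\zeta_{p^N}^{-r_0}\prod_j b_j^{-r_j/p^N}=\beta\,\zeta_{p^m}^{q_0}\prod_j b_j^{q_j/p^m}\in\LL_m^\times\setminus\Gamma^{\mathrm{div}},
\qquad
h(\delta)\le h(\alpha)+p^{-m}\sum_j h(b_j).
\]
\item Since $\delta\in\alpha\cdot\tilde\Gamma^{p\text{-}\mathrm{div}}$, $\delta$ has the same minimal level $m$ as $\alpha$.
\item If $m\ge1$ and $\delta\notin\mathcal Z$, Lemma~\ref{lem:orbitagen2} over $\LL_{m-1}$ puts $\delta$ in $\LL_{m-1}\cdot\tilde\Gamma^{p\text{-}\mathrm{div}}$, contradicting minimality. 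Hence $h(\delta)\ge c_{\mathcal Z}$, and so $h(\alpha)\ge c_{\mathcal Z}-p^{-m}\sum_j h(b_j)$. This is a uniform positive bound once $m$ exceeds some $M$.
\item If $m\le M$, then $\alpha\in\LL_M^\times\cdot\Gamma^{\mathrm{div}}\setminus\Gamma^{\mathrm{div}}$. Conclude with hypothesis b) for the finite extension $\LL_M/\FF$ together with Theorem~\ref{lem:Remstronggen} applied to the field $\LL_M$. This is the only legitimate use of R\'emond's theorem here.
\end{itemize}

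This bounded/unbounded dichotomy, driven by the height-normalizing Euclidean division, is the core of the paper's proof, and it is what your proposal is missing.
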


\begin{proof}

To ease notation, we fix the field embedding $\iota$, which allows us to see $\overline{\QQ}$ as a subfield of $\overline{\QQ_p}$ according to our convention stated in the notation subsection. 

Let $F$ be the topological closure of $\FF$ in $\overline{\QQ_p}$. 
Since $F$ is a discrete valuation field by assumption, Corollary \ref{cor LPTR} proves that the extension $F(\Gamma^{p-\mathrm{div}})/F(\Gamma^{1/p^{n_0}})$ is totally ramified for some integer $n_0\geq 2$.  Note that the topological closure of any finite extension of $\FF$ in $\overline{\QQ_p}$ is a discrete valuation field as it is a finite extension of $F$. In particular, it does not contain $\mug_{p^\infty}$. 

We now use Theorem \ref{thm:NC} with the prime $p$, the field $\FF(\Gamma^{1/p^{n_0}})$, the group $\Gamma^{1/p^{n_0}}$, and the field embedding $\iota$. 
Hence, there exist integers $l\geq n \geq 2$ as well as a subgroup $\tilde{\Gamma}\subseteq \Gamma^{1/p^{n_0}}\cdot \mug_{p^l}$ such that $\FF(\Gamma^{p-\mathrm{div}})= \EE(\tilde{\Gamma}^{p-\mathrm{div}})$, where $\EE=\FF(\Gamma^{1/p^{n+n_0}}, \zeta_{p^l})$. 
Setting $\LL_m=\EE(\tilde{\Gamma}^{1/p^{m+l}})$ and $L_m=F(\Gamma^{1/p^{n+n_0}})(\tilde{\Gamma}^{1/p^{m+l}})$ for all integers $m\geq 0$, Theorem \ref{thm:NC} also claims that  for all integers $m\geq 0$, the topological closure of $\LL_m$ in $\overline{\QQ_p}$ equals $L_m$ and the normal closure of $\Gal(L_{m+1}/L_m)$ in $\Gal(\LL_{m+1}/\EE)$ is $\Gal(\LL_{m+1}/\LL_m)$. 

This maneuver is to ensure that the tower of algebraic extensions \[ \KK(\Gamma, \tilde{\Gamma})\subseteq \EE \subseteq \LL_0\subseteq \LL_1 \subseteq \LL_2 \subseteq \dots \subseteq  \EE(\tilde{\Gamma}^{p-\mathrm{div}})=\FF(\Gamma^{p-\mathrm{div}})\]  satisfies the properties (GAL,  DVF, LTR, NC) introduced in Theorem \ref{thm:Piras}.  The property (LIE) is also satisfied by Proposition \ref{prop:lie}. 

In conclusion, Theorem \ref{thm:Piras} implies that the set \[ \mathcal{Z}=  \bigcup_{m\geq 1} \{\alpha\in\LL_m \ |\ \exists\sigma\in \Gal(\FF(\Gamma^{p-\mathrm{div}})/\LL_{m-1}) \hbox{ such that } \sigma(\alpha)/\alpha\not\in\mug_{p^\infty}\} \] has the property \B. 

Assume by contradiction that there is a sequence of terms $\alpha_i\in \FF(\Gamma^{p-\mathrm{div}})^\times  \setminus \Gamma^{\mathrm{div}}$ such that $h(\alpha_i)\to 0$. 
For all $i$, denote by $m_i$ the smallest non-negative integer such that $\alpha_i\in \LL_{m_i}^\times\cdot \tilde{\Gamma}^{p-\mathrm{div}}$. 
This integer exists since $\alpha_i\in \LL_m^\times \subseteq \LL_m^\times \cdot \tilde{\Gamma}^{p-\mathrm{div}}$ for some $m$.

First, assume that the sequence $(m_i)_i$ is bounded, say by $M$, that is, every $\alpha_i$ lies in $\LL_M^\times \cdot \tilde{\Gamma}^{p-\mathrm{div}}$.
Since $\LL_M/\FF$ is a finite extension, condition $b)$ claims that the set $\LL_M \setminus \Gamma^{\mathrm{div}}$ has the property \B.
By Theorem \ref{lem:Remstronggen}, the set $(\LL_M^\times \cdot \Gamma^{\mathrm{div}}) \setminus \Gamma^{\mathrm{div}}$ has the property \B too, contradicting the fact that $h(\alpha_i)\to 0$ since $\tilde{\Gamma}^{p-\mathrm{div}}\subseteq \Gamma^{\mathrm{div}}$. 

The sequence $(m_i)_i$ is therefore unbounded and by taking a suitable subsequence if needed, we can assume without loss of generality that $m_i\to +\infty$.
By construction, we have $\alpha_i=\beta_i \gamma_i$ for some $\beta_i\in \LL_{m_i}$ and some $\gamma_i\in \tilde{\Gamma}^{p-\mathrm{div}}$. 
Fix a generating set $\{b_1,\dots,b_r\}$  of $\tilde{\Gamma}$. 
We can thus express every $\gamma_i$ as 
 \[ \gamma_i= \zeta_{p^{n_i}}^{k_{0,i}} \prod_{j=1}^r b_j^{k_{j,i}/p^{n_i}}\] for some integers $k_{j,i}$ and some integer $n_i>m_i$.
Let $k_{j,i}=q_{j,i} p^{n_i-m_i}+r_{j,i}$ denote the Euclidean division of $k_{j,i}$ by $p^{n_i-m_i}$. 
Note that $r_{j,i}/p^{n_i} < 1/p^{m_i}$.
A direct computation shows that \[ \delta_i:=\alpha_i \zeta_{p^{n_i}}^{-r_{0,i}} \prod_{j=1}^r b_j^{-r_{j,i}/p^{n_i}} = \beta_i \zeta_{p^{m_i}}^{q_{0,i}} \prod_{j=1}^r b_j^{q_{j,i}/p^{m_i}} \in \LL_{m_i}^\times \setminus \Gamma^{\mathrm{div}} \] since $\alpha_i\notin \{0\} \cup \Gamma^{\mathrm{div}}$. 
By the basic properties of the Weil height, we have \[ h(\delta_i) \leq  h(\alpha_i)+ \sum_{j=1}^r \frac{r_{j,i}}{p^{n_i}} h(b_j) < h(\alpha_i)+ \frac{1}{p^{m_i}} \sum_{j=1}^r h(b_j). \]
 It follows that $h(\delta_i)\to 0$, since \(m_i\to+\infty\). 
Therefore, $\delta_i\notin \mathcal{Z}$ for all $i$ large enough, and since $\delta_i\in \LL_{m_i}$, we get $\sigma(\delta_i)/\delta_i \in \mug_{p^\infty}$ for all $\sigma\in \Gal(\LL_{m_i}/\LL_{m_i-1}) $.  
Using Lemma \ref{lem:orbitagen2} with the field $\LL_{m_i-1}$ and the group $\tilde{\Gamma}^{1/p^{m_i+l-1}}$ gives $\delta_i\in \tilde{\Gamma}^{p-\mathrm{div}}\cdot \LL_{m_i-1}$. 
We conclude $\alpha_i \in \delta_i\cdot \tilde{\Gamma}^{p-\mathrm{div}}\subseteq \LL_{m_i-1}\cdot \tilde{\Gamma}^{p-\mathrm{div}}$, contradicting the minimality of $m_i$.
\end{proof}

\bigskip

\bibliographystyle{abbrv}
\bibliography{BOG}

\bigskip

\begin{small}
\noindent\textsc{Andrea Conti} -- IWR, Heidelberg University, Im Neuenheimer Feld 205, 69120 Heidelberg, Germany, \url{andrea.conti@iwr.uni-heidelberg.de}

\smallskip
\noindent\textsc{Ilaria Del Corso} -- Dipartimento di Matematica, Università di Pisa, Largo Bruno Pontecorvo 5, 
56127 Pisa, Italy, \url{ilaria.delcorso@unipi.it}

\smallskip\noindent\textsc{Arnaud Plessis} -- Yanqi Lake Beijing Institute of Mathematical Sciences and Applications, Huairou District, Beijing, China, \url{plessis@bimsa.cn}

\smallskip
\noindent\textsc{Lea Terracini} -- Dipartimento di Informatica, Università di Torino, Corso Svizzera 185, 10149 Torino, Italy, \url{lea.terracini@unito.it}
\end{small}
\end{document}